\documentclass[10pt,reqno,final]{amsart}

\usepackage[utf8]{inputenc}
\usepackage[T1]{fontenc}
\usepackage{fixltx2e}
\usepackage{microtype}
\usepackage{amssymb}
\usepackage{bbm}
\usepackage[letterpaper,hmargin=1in,vmargin=1in]{geometry}
\usepackage{mathtools}
\usepackage{mathrsfs}
\usepackage{enumerate}
\usepackage{ifdraft}
\usepackage{booktabs}
\usepackage{suffix}

\newcommand*{\Ccal}{\mathcal{C}}
\newcommand*{\Dcal}{\mathcal{D}}
\newcommand*{\Tcal}{\mathcal{T}}
\newcommand*{\Cscr}{\mathscr{C}}
\newcommand*{\Fscr}{\mathscr{F}}
\newcommand*{\Hscr}{\mathscr{H}}

\newcommand{\abl}[2][]{\operatorname{abl}\paren[#1]{#2}}
\WithSuffix\newcommand\abl*[1]{\operatorname{abl}\paren*{#1}}

\DeclareMathOperator*{\argmax}{arg\,max}
\DeclareMathOperator{\Aut}{Aut}
\DeclarePairedDelimiter\card{\lvert}{\rvert}
\newcommand*{\chromatic}{\chi}
\DeclareMathOperator{\cl}{cl}
\DeclarePairedDelimiter\curly{\{}{\}}

\DeclareMathOperator{\diag}{diag}
\DeclareMathOperator{\Diag}{Diag}
\newcommand*{\drop}{\setminus}
\newcommand*{\eps}{\varepsilon}
\newcommand*{\ffrom}{\colon}
\newcommand*{\fto}{\to}
\newcommand{\gauge}[3][]{\gamma#1(#3\thinspace#1|\thinspace#2#1)}
\WithSuffix\newcommand\gauge*[2]{\gamma\left(#2\thinspace\middle|\thinspace#1\right)}
\newcommand*{\hprod}{\mathbin{\odot}}
\DeclareMathOperator{\Image}{Im}
\newcommand*{\incidvector}[1]{\mathbbm{1}_{#1}}

\DeclareMathOperator{\interior}{int}
\DeclarePairedDelimiter\anglebrackets{\langle}{\rangle}
\newcommand\iprod[3][]{\anglebrackets[#1]{#2,#3}}
\WithSuffix\newcommand\iprod*[2]{\anglebrackets*{#1,#2}}
\newcommand*{\iprodt}[2]{#1^{\transp}#2}
\newcommand\Iverson[2][]{\sqbrac[#1]{#2}}
\WithSuffix\newcommand\Iverson*[1]{\sqbrac*{#1}}
\newcommand*{\lambdamin}{\lambda_{\min}}
\newcommand*{\lambdamax}{\lambda_{\max}}

\let\emptyset\varnothing

\DeclarePairedDelimiter\norm{\lVert}{\rVert}
\newcommand{\NormalCone}[3][]{\operatorname{Normal}\paren[#1]{#2;#3}}
\WithSuffix\newcommand\NormalCone*[2]{\operatorname{Normal}\paren*{#1;#2}}
\newcommand\pnorm[3][]{\norm[#1]{#2}_{#3}}
\WithSuffix\newcommand\pnorm*[2]{\norm*{#1}_{#2}}
\DeclareMathOperator{\Null}{Null}
\newcommand*{\ones}{\bar{e}}
\newcommand*{\oprod}[2]{#1^{}#2^{\transp}}
\newcommand*{\oprodsym}[1]{\oprod{#1}{#1}}
\DeclarePairedDelimiter\paren{(}{)}

\newcommand*{\Psd}[1]{\Symraw_+^{#1}}

\def\restriction{\mathord{\upharpoonright}}
\newcommand*{\qform}[2]{#2^{\transp}#1#2^{}}
\newcommand*{\Reals}{\mathbb{R}}
\newcommand*{\stab}{\alpha}
\DeclareMathOperator{\STAB}{STAB}
\DeclarePairedDelimiter\set{\{}{\}}
\newcommand\setst[3][]{\curly[#1]{\,{#2}\,\colon{#3}}}
\WithSuffix\newcommand\setst*[2]{\curly*{\,{#1}\,\colon{#2}}}
\DeclarePairedDelimiter\sqbrac{[}{]}

\DeclareMathOperator{\supp}{supp}
\newcommand{\suppf}[3][]{\delta^*#1(#3\thinspace#1|\thinspace#2#1)}
\WithSuffix\newcommand\suppf*[2]{\delta^*\left(#2\thinspace\middle|\thinspace#1\right)}
\newcommand*{\Sym}[1]{\Symraw^{#1}}
\newcommand*{\Symraw}{\mathbb{S}}
\newcommand*{\thalf}{\tfrac{1}{2}}
\DeclareMathOperator{\trace}{Tr}
\newcommand*{\transp}{\mathsf{T}}

\newlength{\claimmargin}
\setlength{\claimmargin}{10ex}

\newenvironment{claimeq}
{%
  \refstepcounter{equation}%
  \begin{list}
    {%
\hspace*{\textwidth}
\hspace*{-\claimmargin}
\hfill
\textup{(\theequation)}
\hspace*{\claimmargin}
\hspace*{-\textwidth}}
    {
      \setlength{\topsep}{\smallskipamount}
      \setlength{\leftmargin}{\claimmargin}
      \setlength{\rightmargin}{\claimmargin}
      \setlength{\labelsep}{0cm}
      \setlength{\labelwidth}{\claimmargin}
    }%
  \item%
  }%
  {%
  \end{list}%
}

\newcommand*{\tb}{\bar{t}}
\newcommand*{\wb}{\bar{w}}
\newcommand*{\Xb}{\bar{X}}
\newcommand*{\Xh}{\hat{X}}
\newcommand*{\Xt}{\tilde{X}}
\newcommand*{\xb}{\bar{x}}
\newcommand*{\Yh}{\hat{Y}}
\newcommand*{\yb}{\bar{y}}

\newcommand*{\AdjCone}{\mathbb{A}}
\newcommand*{\AdjDual}[1]{\overline{#1}}
\newcommand*{\AdjMatrix}[1]{A_{#1}}
\newcommand*{\Bb}{\bar{B}}
\newcommand*{\Bt}{\tilde{B}}
\newcommand*{\ComplPositive}[1]{\Ccal_{#1}^{*}}
\newcommand{\component}[3][]{\sqbrac[#1]{#2}_{#3}}
\WithSuffix\newcommand\component*[2]{\sqbrac*{#1}_{#2}}
\newcommand*{\ConvexSet}{\Cscr}
\newcommand*{\Copositive}[1]{\Ccal_{#1}^{}}
\newcommand*{\CongMap}[1]{\operatorname{Congr}_{#1}}
\newcommand*{\DiagScale}[2]{\Diag(#2)#1\Diag(#2)}
\newcommand*{\DiagScaleMap}[1]{\Dcal_{#1}}
\newcommand*{\dual}[1]{#1^*}
\newcommand*{\Euclidean}{\EuclideanA}
\newcommand*{\EuclideanA}{\mathbb{E}}
\newcommand*{\Face}{\Fscr}
\DeclareMathOperator{\FRAC}{FRAC}
\newcommand*{\fracChromatic}{\chi^*}
\newcommand\genTH[3][]{\TH\paren[#1]{#2,#3}}
\WithSuffix\newcommand\genTH*[2]{\TH\paren*{#1,#2}}
\newcommand*{\hb}{\bar{h}}
\newcommand\liftedGenTH[3][]{\widehat{\TH}\paren[#1]{#2,#3}}
\WithSuffix\newcommand\liftedGenTH*[2]{\widehat{\TH}\paren*{#1,#2}}
\newcommand*{\liftedQuadCone}{\widehat{\QuadCone}}
\newcommand*{\liftedThetaBodyA}{\widehat{\hspace{-1pt}\Cscr}}
\newcommand\LStheta[3][]{\upsilon\paren[#1]{#2;#3}}
\WithSuffix\newcommand\LStheta*[2][]{\upsilon\paren*{#1;#2}}
\newcommand\LSthetaC[3][]{\upsilon\paren[#1]{#2;#3}}
\WithSuffix\newcommand\LSthetaC*[2][]{\upsilon\paren*{#1;#2}}
\newcommand*{\MPinverse}[1]{#1^{\dagger}}
\newcommand*{\onelift}[2]{%
  \begin{bmatrix*}[l]
    1  & #1^{\transp}\thinspace \\
    #1 & #2                     \\
  \end{bmatrix*}}
\renewcommand*{\ones}{\mathbbm{1}}
\newcommand*{\PermMatrix}[1]{P(#1)}
\newcommand*{\PolyAdjCone}[2]{\mathbb{A}_{#1,#2}}
\newcommand*{\PolyAdjConeNode}[4]{\mathbb{A}_{#1,#2,#3,#4}}
\DeclareMathOperator{\QSTAB}{QSTAB}
\newcommand*{\QuadCone}{\mathbb{K}}
\newcommand*{\QuadConeTwo}[1]{\QuadCone_2^{#1}}
\newcommand\Schurlift[2][]{\operatorname{Schur}\paren[#1]{#2}}
\WithSuffix\newcommand\Schurlift*[1]{\operatorname{Schur}\paren*{#1}}
\newcommand\Psdlift[2][]{\operatorname{Psd}\paren[#1]{#2}}
\WithSuffix\newcommand\Psdlift*[1]{\operatorname{Psd}\paren*{#1}}
\newcommand*{\setlift}[1]{\set{0} \cup #1}
\DeclareMathOperator*{\Symmetrize}{Sym}
\newcommand*{\Symnonneg}[1]{\Symraw_{\scriptscriptstyle\geq 0}^{#1}}
\let\theta\vartheta
\let\vartheta\oldtheta
\newcommand*{\ThetaBodyA}{\Cscr}

\usepackage{ifpdf}
\ifpdf
\usepackage[pdftex,%
pdfauthor={Marcel Kenji de Carli Silva and Levent Tunçel},%
pdftitle={An Axiomatic Duality Framework%
  for the Theta Body and Related Convex Corners},%
colorlinks=true,%
\ifoptionfinal{
  linkcolor=black,%
  citecolor=black,%
}{}
hypertexnames=true,bookmarks=true,pagebackref]{hyperref}
\else
\fi

\theoremstyle{definition}
\newtheorem{corollary}{Corollary}
\newtheorem{proposition}[corollary]{Proposition}
\newtheorem{theorem}[corollary]{Theorem}
\newtheorem{lemma}[corollary]{Lemma}

\numberwithin{equation}{section}

\title[%
  An Axiomatic Duality Framework for the Theta Body and Related Convex Corners
]{%
  An Axiomatic Duality Framework\\
  for the Theta Body and Related Convex Corners
}

\author{Marcel K.\ de Carli Silva}
\thanks{%
  Part of the results in this paper appeared in the PhD
  thesis of the first author.\\
  \indent
  Research of the first author was supported in part by a Sinclair
  Scholarship, a Tutte Scholarship, Discovery Grants from NSERC, and
  by ONR research grant N00014-12-10049, while at the Department of
  Combinatorics and Optimization, University of Waterloo, and by grants
  2013/20740-9 and 2013/03447-6, São Paulo Research Foundation (FAPESP), while at
  the Institute of Mathematics and Statistics, University of São
  Paulo.
}
\author{Levent Tunçel}
\thanks{%
  Research of the second author was supported in part by a research
  grant from University of Waterloo, Discovery Grants from NSERC and
  by ONR research grant N00014-12-10049.
}

\date{\today}

\def\TH{\operatorname{TH}}

\begin{document}

\begin{abstract}
  Lovász theta function and the related theta body of graphs have been
  in the center of the intersection of four research areas:
  combinatorial optimization, graph theory, information theory, and
  semidefinite optimization.  In this paper, utilizing a modern convex
  optimization viewpoint, we provide a set of minimal conditions
  (axioms) under which certain key, desired properties are
  generalized, including the main equivalent characterizations of the
  theta function, the theta body of graphs, and the corresponding
  antiblocking duality relations.  Our framework describes several semidefinite and polyhedral relaxations of the stable set polytope of a graph as generalized theta bodies.  As a by-product of our approach, we
  introduce the notion of ``Schur Lifting'' of cones which is dual to
  PSD Lifting (more commonly used in SDP relaxations of combinatorial
  optimization problems) in our axiomatic generalization.  We also
  generalize the notion of complements of graphs to diagonally
  scaling-invariant polyhedral cones.  Finally, we provide a weighted generalization of the copositive formulation of the fractional chromatic number by Dukanovic and Rendl.
\end{abstract}

\maketitle

\ifoptionfinal{}{%
  \setcounter{tocdepth}{2}
  \tableofcontents
}

\maketitle

\section{Introduction}

The Lovász theta function is one of the most elegant highlights in
combinatorial and semidefinite optimization.  First introduced in the
seminal paper by Lovász~\cite{Lovasz79a} to solve a problem in
information theory, the theta function was further developed in the
1980's along with applications of the ellipsoid
method~\cite{GroetschelLS88a, GroetschelLS93a}, leading to the definition of the object
known as the theta body of a graph as a semidefinite relaxation of its
stable set polytope.  This relaxation is tight for perfect graphs, and
it leads to the only known (strongly) polynomial algorithm for finding
optimal stable sets and colorings in such graphs.  Since then, the
theory surrounding the Lovász theta function has been further
extended~\cite{LovaszS91a, Galtman00a, LuzS05a, GvozdenovicL08a}, and it has been used
in the design of approximation algorithms~\cite{KargerMS98a,
  KleinbergG98a, AlonMMN06a, ChanL12a}, in complexity theory~\cite{Tardos88a,
  FeigeL92a, BacikM95a, Bacik97a}, in information
theory~\cite{Marton93a, Simonyi01a, CKLMS1990}, and in extremal geometry~\cite{BachocNOV09a}.

For a graph \(G = (V,E)\), the \emph{theta body} of~\(G\) may be
defined as the set
\begin{equation}
  \label{eq:TH-intro}
  \TH(G)
  =
  \setst*{
    x \in \Reals^V
  }{
    \exists X \in \Sym{V},\,
    X_{ii} = x_i\,\forall i \in V,\,
    X_{ij} = 0\,\forall ij \in E,\,
    \begin{bmatrix}
      1 & x^{\transp}\thinspace \\
      x & X
    \end{bmatrix}
    \in \Psd{\setlift{V}}
  },
\end{equation}
where \(\Sym{V}\) denotes the set of \(V \times V\) symmetric matrices
and \(\Psd{\setlift{V}}\) is the set of symmetric positive
semidefinite matrices on the index set \(\setlift{V}\); we assume that
\(0\) is not an element of~\(V\).  The \emph{stable set polytope}
of~\(G\), denoted by \(\STAB(G)\), is defined as the convex hull of
incidence vectors of stable sets of~\(G\); a set \(S \subseteq V\) is
\emph{stable} in~\(G\) if no edge of~\(G\) joins two elements
of~\(S\).  It is not hard to check that, if \(x\) is the incidence
vector of a stable set of~\(G\), then \(X \coloneqq \oprodsym{x}\)
satisfies the constraints on the RHS of~\eqref{eq:TH-intro}, so \(x
\in \TH(G)\).  Thus, the theta body is a relaxation of~\(\STAB(G)\),
and the \emph{theta function}
\begin{equation}
  \label{eq:theta-intro}
  \theta(G) = \max\setst*{\textstyle\sum_{i \in V} x_i}{x \in \TH(G)}
\end{equation}
is an upper bound for the size of a largest stable set in~\(G\).  It
is also convenient to define a weighted version~\(\theta(G;w)\) of the
theta function, by multiplying~\(x_i\) in the objective function
of~\eqref{eq:theta-intro} by some weight \(w_i \geq 0\) for each \(i
\in V\).

Part of the broad applicability of the theta function owes to its
multitude of equivalent formulations, which led
Goemans to the conclusion that ``it seems all paths
lead to \(\theta\)!\hspace{1pt}''~\cite{Goemans97a}.  For instance, the eigenvalue
formulation
\begin{equation}
  \label{eq:lambda2-intro}
  \theta(G) = \min\setst*{
    \lambdamax\paren*{A + \oprodsym{\ones}}
  }{
    A \in \Sym{V},\,
    A_{ij} \neq 0 \implies ij \in E
  },
\end{equation}
where \(\lambdamax\) extracts the largest eigenvalue and \(\ones\) is
the vector of all-ones, is central for the approximate (vector)
coloring algorithm of Karger, Motwani and Sudan~\cite{KargerMS98a}.
Similarly, the non-convex formulation
\begin{equation*}
  \theta(G) = \max\setst*{
    \lambdamax(B)
  }{
    B \in \Psd{V},\,
    B_{ii} = 1\,\forall i \in V,\,
    B_{ij} = 0\,\forall ij \in E
  }
\end{equation*}
essentially says that \(\theta(G)\) is the best lower bound for the
chromatic number of~\(G\) from a family of bounds due to
Hoffman~\cite{Hoffman70a}; the \emph{chromatic number} of~\(G\) is the minimum size of a partition of~\(V\) into stable sets of~\(G\).  The description~\eqref{eq:TH-intro} itself
is also especially well suited for proving that all (nontrivial) facets
of~\(\TH(G)\) are determined by \emph{clique inequalities} (see, e.g.,
\cite[Theorem~67.13]{Schrijver03b}), i.e., inequalities of the form
\(\sum_{i \in K} x_i \leq 1\) for some clique~\(K\) of~\(G\); a set
\(K \subseteq V\) is a \emph{clique} of~\(G\) if every pair of
elements of~\(K\) is an edge of~\(G\).

Most of these alternative formulations also apply to some classical
variants of~\(\theta\), such as the functions~\(\theta'\)
and~\(\theta^+\) defined as in~\eqref{eq:theta-intro} over
corresponding variants of the theta body, denoted by~\(\TH'(G)\)
and~\(\TH^+(G)\), respectively (we define them in
Section~\ref{sec:theta-convex-corners}); see~\cite{McElieceRR78a,
  Schrijver79a, Szegedy94a}.  Many of such wealth of interesting
characterizations arguably come from Semidefinite Programming (SDP)
Strong Duality.  A particularly illuminating manifestation of this
duality is the identity (see, e.g.,
\cite[Theorem~67.12]{Schrijver03b})
\begin{subequations}
  \label{eq:abl-intro}
  \begin{equation}
    \label{eq:abl-TH-intro}
    \abl[\big]{\TH(G)} = \TH(\overline{G}),
  \end{equation}
  that is, the \emph{antiblocker} of \(\TH(G)\) is the theta body
  of~\(\overline{G}\), the complement of~\(G\).  Antiblocking duality
  is the notion of duality most appropriate for a class of convex sets
  known as convex corners (defined in
  Section~\ref{sec:theta-convex-corners}), which include all variants
  of theta bodies, as well as~\(\STAB(G)\).  Similar instances
  of~\eqref{eq:abl-TH-intro} include
  \begin{gather}
    \label{eq:abl-TH'-intro}
    \abl{\TH'(G)} = \TH^+(\overline{G}), \\
    \label{eq:abl-STAB-intro}
    \abl{\STAB(G)} = \QSTAB(\overline{G});
  \end{gather}
\end{subequations}
here, \(\QSTAB(G) \subseteq [0,1]^V\) is a classical polyhedral relaxation
for~\(\STAB(G)\) determined by clique inequalities (we define it in
Section~\ref{sec:TH-Cop}).  The relaxations in~\eqref{eq:abl-intro} are related by the following chain:
\begin{equation}
  \label{eq:STAB-chain}
  \STAB(G) \subseteq \TH'(G) \subseteq \TH(G) \subseteq \TH^+(G)
  \subseteq \QSTAB(G) \subseteq \FRAC(G);
\end{equation}
here, \(\FRAC(G) \subseteq [0,1]^V\) is determined by edge inequalities, i.e., clique inequalities where the clique is a single edge (we define it in Section~\ref{sec:schur-liftings}).  The beautiful, striking relationships in~\eqref{eq:abl-intro} continue to manifest themselves in certain lift-and-project methods \cite{AEN2002, GMS2003, LT2005}.

In this paper, we define a notion of generalized theta bodies and
develop a duality theory that: (1)~describes all sets in~\eqref{eq:STAB-chain} as theta bodies, (2) extends many of
the equivalent formulations for~\(\theta\) to the corresponding
generalized theta functions, and (3) extends the antiblocker
relations~\eqref{eq:abl-intro} to generalized theta bodies.  We shall
parameterize a generalized theta body, henceforth called just theta
body, by two convex cones. One of them, which we denote
by~\(\AdjCone\), shall encode the adjacency constraints \(X_{ij} = 0\)
in~\eqref{eq:TH-intro}, i.e., those constraints shall be replaced with
``\(X \in \AdjCone\)''.  The other cone, which we denote
by~\(\liftedQuadCone\), will replace~\(\Psd{\setlift{V}}\)
in~\eqref{eq:TH-intro}; it essentially constrains how \(X\) and \(\oprodsym{x}\) are related, so we may think of~\(\liftedQuadCone\) as a cone that encodes quadratic relations.  By varying these cones over some natural
families, we shall obtain a description of all sets in~\eqref{eq:STAB-chain}
as theta bodies, as well as the corresponding antiblocking duality
relations that links them in pairs as in~\eqref{eq:abl-intro}.  We thus unify the description of
all these relaxations and show that the ingenious though \emph{ad~hoc}
description~\eqref{eq:TH-intro} is in fact quite central, powerful, and natural.
The corresponding theory shall also make clearer the key role played
by the positive semidefinite cone in this generalized context, leading to
the most striking of the duality relations~\eqref{eq:abl-intro},
namely, the extremely symmetric relation~\eqref{eq:abl-TH-intro}.

The key axiom we shall need require from our parameter
cones~\(\AdjCone\) and~\(\liftedQuadCone\) will be their
\emph{diagonally scaling invariance}, i.e., these cones must be closed
under simultaneous left- and right-multiplications by any (and the
same) nonnegative diagonal matrix.  We shall then generalize the
proofs of equivalence of several formulations for~\(\theta\) to rely
(essentially) solely on this invariance property.  This axiomatic
approach also allows us to gauge the full power of the existing proof
methods; for instance, some of the equivalent formulations
for~\(\theta\) shall only work when the cone~\(\liftedQuadCone\) is
the positive semidefinite cone~\(\Psd{\setlift{V}}\).

Another advantage of unifying the equivalent formulations
of~\(\theta\) is that it provides less error-prone proofs of equivalent
formulations of the variants~\(\theta'\) and~\(\theta^+\).  Many such
formulations are listed in the literature without proof, with the
apparently implicit suggestion that, to prove them, it suffices to
repeat and slightly adapt the corresponding proofs for~\(\theta\).
While this may be true in most cases, it has already led to some
inaccuracies in the literature, as pointed out by the authors
in~\cite[Sec.~4.1]{CarliT13a}.

Finally, we are able to extend the copositive formulation of the
fractional chromatic number by Dukanovic and
Rendl~\cite{DukanovicR10a} to the weighted case, and we provide a
unified treatment of weighted generalizations of the convex quadratic
characterization of~\(\theta\) by Luz and Schrijver~\cite{LuzS05a} to
all of~\(\theta\), \(\theta'\), and~\(\theta^+\).

\subsection{Organization of the text}
\label{sec:organization}

The classical monograph~\cite{GroetschelLS93a}, which develops much of
the theory surrounding the theta function, defines weighted parameters
\(\theta_i(G;w)\) for each \(i \in \set{1,\dotsc,4}\) and shows that they are all
equal to~\(\theta(G;w)\) by proving the chain of inequalities
\begin{equation}
  \label{eq:theta-original-chain}
  \theta(G;w)
  \leq
  \theta_1(G;w)
  \leq
  \theta_2(G;w)
  \leq
  \theta_3(G;w)
  \leq
  \theta_4(G;w)
  \leq
  \theta(G;w)
\end{equation}
for a nonnegative weight function~\(w\) on~\(V\); see
also~\cite[Sec.~5]{Knuth94a}.  For instance, \(\theta_2\) is the
(weighted generalization of the) formulation on the RHS
of~\eqref{eq:lambda2-intro}.  In this paper, we shall generalize these
parameters to arbitrary theta bodies and prove that they are all equal
under some mild assumptions.  As in~\cite{GroetschelLS93a}, this shall
establish an antiblocking relation like those in~\eqref{eq:abl-intro}.

As we briefly hinted just before~\eqref{eq:STAB-chain}, we shall
parameterize our theta bodies using two cones, which we usually denote
by~\(\AdjCone\) and~\(\liftedQuadCone\); e.g., the membership
constraints \(X_{ij} = 0\) for every \(ij \in E\)
in~\eqref{eq:TH-intro} are replaced with the single membership
constraint ``\(X \in \AdjCone\)''.  One slightly confusing issue is
the fact that the cones~\(\AdjCone\) and~\(\liftedQuadCone\) do not
live in the same dimension:  \(\AdjCone\) lives in~\(\Sym{V}\), whereas
\(\liftedQuadCone\) lies in the higher-dimensional cone~\(\Sym{\setlift{V}}\), where \(0\) is
assumed not to be in~\(V\).  Throughout the paper, we label subsets
of~\(\Sym{\setlift{V}}\) with a wide hat, as in~\(\liftedQuadCone\),
and elements of such sets with a hat, e.g., \(\hat{X} \in
\liftedQuadCone\).

We shall see later that it is rather natural and convenient to define
a ``lifting'' of a cone~\(\QuadCone\) in~\(\Sym{V}\) to a
higher-dimensional cone in~\(\Sym{\setlift{V}}\) in a systematic way.
In fact, we shall define two such lifting operators for a cone \(\QuadCone
\subseteq \Sym{V}\), denoted by~\(\Psdlift{\cdot}\)
and~\(\Schurlift{\cdot}\), in such a way that the antiblocker of a
theta body parameterized by~\(\Psdlift{\QuadCone}\) is parameterized by
\(\Schurlift{\cdot}\) applied to the dual cone of~\(\QuadCone\).
Similarly, we will define a notion of duality for the cone
\(\AdjCone\), that encodes adjacencies, and the ``dual''
of~\(\AdjCone\) will be denoted by~\(\AdjDual{\AdjCone}\) to match the
occurrence of complements of graphs in~\eqref{eq:abl-intro}.

With these remarks in mind, we may now describe the organization of
the paper.  We list basic terminology and notation in
Subsection~\ref{sec:prelim}.  We then define arbitrary theta bodies
and prove their most basic properties in
Section~\ref{sec:theta-convex-corners}.  In
Section~\ref{sec:poly-diag-cones}, we study the structure of the
cones~\(\AdjCone\) that make sense in our theory, namely, the ones
that are diagonally scaling-invariant and polyhedral; we also quickly
develop their duality theory.  Next, in Section~\ref{sec:liftings}, we
describe the lifting operators for cones mentioned above, and we prove a weak duality theorem.  Some basic
reformulations of antiblocking duality are recalled and adapted to our
context in Section~\ref{sec:abl-reformulation}.  The latter three
sections also prove a number of equivalences among the~\(\theta_i\)'s
corresponding to~\eqref{eq:theta-original-chain}.  These equivalences
are put together in Section~\ref{sec:plethora} to prove our
generalization of the equivalence~\eqref{eq:theta-original-chain}, the
duality relations~\eqref{eq:abl-intro}, and further~\(\theta\)
results.  In Sections~\ref{sec:TH-Cop} and~\ref{sec:TH-Psd}, we study
properties of some theta bodies defined over some specific cones,
namely, the copositive and completely positive cones, and the
semidefinite cone.  In between those sections, we describe in
Section~\ref{sec:Hoffman} a further characterization of~\(\theta\)
related to Hoffman bounds for the chromatic number of a graph.

\subsection{Notation}
\label{sec:prelim}

We set the following notation.  Throughout the paper, \(V\) shall
denote a finite set.  We assume throughout that \(0 \not\in V\), as we
will often adjoin~\(0\) to~\(V\) to form an index set \(\setlift{V}\).
The family of subsets of~\(V\) of size~\(2\) is denoted
by~\(\tbinom{V}{2}\).  If \(E \subseteq \tbinom{V}{2}\), we set
\(\overline{E} \coloneqq \tbinom{V}{2} \drop E\).  For distinct \(i,j
\in V\), we denote \(ij \coloneqq \set{i,j}\).  The \emph{standard
  basis vectors} of~\(\Reals^V\) are \(\setst{e_i}{i \in V}\).  We
adopt \emph{Iverson notation}: for a predicate \(P\), we denote
\begin{equation*}
  \Iverson{P}
  \coloneqq
  \begin{cases*}
    1 & if \(P\) holds, \\
    0 & otherwise.      \\
  \end{cases*}
\end{equation*}
If \(P\) is false, then we consider \(\Iverson{P}\) to be ``strongly
zero,'' in the sense that we sometimes write expressions of the form
\(\Iverson{x \neq 0}(1/x)\) that evaluate to~\(0\) if \(x = 0\).

Most of the rest of our notation is listed over
tables~\ref{tbl:special-sets}, \ref{tbl:mat-vecs}, and \ref{tbl:conv}.

\bgroup
\renewcommand{\arraystretch}{1.2}
\begin{table}[htbp]
  \caption{Special sets.}
  \centering
  \begin{tabular}{r c p{11cm} }
    \toprule
    \(\Reals_+\) & \(\coloneqq\) & \(\setst{x \in \Reals}{x \geq 0}\)
    \\
    \(\Reals_{++}\) & \(\coloneqq\) & \(\setst{x \in \Reals}{x > 0}\)
    \\
    \(\Sym{V}\) & \(\coloneqq\) & the space of symmetric \(V \times
    V\) matrices
    \\
    \(\Psd{V}\) & \(\coloneqq\) & the cone of positive semidefinite
    matrices in~\(\Sym{V}\)
    \\[2pt]
    \(\Symnonneg{V}\) & \(\coloneqq\) & \(\setst{X \in \Sym{V}}{X \geq
      0}\), the cone of entrywise nonnegative matrices in~\(\Sym{V}\)
    \\
    \(\Copositive{V}\) & \(\coloneqq\) & \(\setst{X \in
      \Sym{V}}{\qform{X}{h} \geq 0\,\forall h \in \Reals_+^V}\), the cone
    of \emph{copositive matrices}
    \\
    \(\ComplPositive{V}\) & \(\coloneqq\) & the cone of \emph{completely
    positive matrices}, i.e., the dual cone of~\(\Copositive{V}\)
    \\
    \bottomrule
  \end{tabular}
  \label{tbl:special-sets}
\end{table}
\egroup                         

\bgroup
\renewcommand{\arraystretch}{1.2}
\begin{table}[htbp]
  \caption{Notation for vectors and matrices.}
  \centering
  \begin{tabular}{r c p{11cm} }
    \toprule
    \(\iprod{X}{Y}\) & \(\coloneqq\) & \(\trace(XY^{\transp})\), the
    \emph{trace inner-product} on~\(\Sym{V}\)
    \\
    \(\diag\) & \(\coloneqq\) & the linear map that extracts the
    diagonal of a matrix
    \\
    \(\Diag\) & \(\coloneqq\) & the adjoint of \(\diag\)
    \\
    \(X[U]\) & \(\coloneqq\) & the principal submatrix of \(X \in
    \Reals^{V \times V}\) indexed by \(U \subseteq V\)
    \\
    \(\lambdamax(X)\) & \(\coloneqq\) & the largest eigenvalue of \(X
    \in \Sym{V}\)
    \\
    \(\lambdamin(X)\) & \(\coloneqq\) & the smallest eigenvalue of \(X
    \in \Sym{V}\)
    \\
    \(I\) & \(\coloneqq\) & the identity matrix in appropriate
    dimension
    \\
    \(\MPinverse{A}\) & \(\coloneqq\) &  the \emph{Moore-Penrose
      pseudoinverse} of~\(A \in \Reals^{V \times W}\);
    see~\cite{HornJ90a}
    \\
    \(\ones\) & \(\coloneqq\) & the vector of all-ones in the
    appropriate space
    \\
    \(\incidvector{U}\) & \(\coloneqq\) & the \emph{incidence vector}
    of \(U \subseteq V\) in \(\Reals^V\)
    \\
    \(\sqrt{w}\) & \(\coloneqq\) & the componentwise square root of
    \(w \in \Reals_+^V\), i.e., \(\component{\sqrt{w}\,}{i} \coloneqq
    \sqrt{w_i}\) for every \(i \in V\)
    \\
    \(x \oplus y\) & \(\coloneqq\) & the direct sum of vectors \(x \in
    \Reals^V\) and \(y \in \Reals^W\)
    \\
    \(\supp(x)\) & \(\coloneqq\) & \(\setst{i \in V}{x_i \neq 0}\),
    the \emph{support} of \(x \in \Reals^V\)
    \\
    \(x \hprod y\) & \(\coloneqq\) & the \emph{Hadamard product} of
    \(x, y \in \Reals^V\), i.e., \(\component{x \hprod y}{i} \coloneqq
    x_i y_i\) for every \(i \in V\)
    \\
    \bottomrule
  \end{tabular}
  \label{tbl:mat-vecs}
\end{table}
\egroup                         

\bgroup
\renewcommand{\arraystretch}{1.2}
\begin{table}[htbp]
  \caption{Notation for Convex Analysis, mostly following~\cite{Rockafellar97a}.}
  \centering
  \begin{tabular}{r c p{11cm} }
    \toprule
    \(\Euclidean\) & & an \emph{Euclidean space}, i.e., a
    finite-dimensional real vector space equipped with an Euclidean
    inner-product
    \\
    \(\dual{\Euclidean}\) & & the \emph{dual space} of~\(\Euclidean\)
    \\
    \(\dual{\QuadCone}\) & \(\coloneqq\) & \(\setst[\big]{y \in
      \dual{\Euclidean}}{\iprod{x}{y} \geq 0\,\forall x \in
      \QuadCone}\), the \emph{dual cone} of a convex cone \(\QuadCone
    \subseteq \Euclidean\)
    \\
    \(\suppf{\ConvexSet}{w}\) & \(\coloneqq\) &
    \(\sup\setst[\big]{\iprod{w}{x}}{x \in \ConvexSet}\), the
    \emph{support function} of \(\ConvexSet \subseteq \Euclidean\)
    defined for each \(w \in \Euclidean^*\)
    \\
    \(\abl{\ConvexSet}\) & \(\coloneqq\) & \(\setst{y \in
      \Reals_+^V}{\iprod{y}{x} \leq 1\,\forall x \in \ConvexSet}\),
    the \emph{antiblocker} of \(\ConvexSet \subseteq \Reals_+^V\)
    \\
    \(\cl(\ConvexSet)\) & \(\coloneqq\) & the \emph{closure} of
    \(\ConvexSet \subseteq \Euclidean\)
    \\
    \(\interior(\ConvexSet)\) & \(\coloneqq\) & the \emph{interior} of
    \(\ConvexSet \subseteq \Euclidean\)
    \\
    \(\Aut(\ConvexSet)\) & \(\coloneqq\) & \(\setst{\Tcal \ffrom
      \Euclidean \fto \Euclidean}{\Tcal\text{ nonsingular linear
        map},\, \Tcal(\ConvexSet) = \ConvexSet}\), the
    \emph{automorphism group} of \(\ConvexSet \subseteq \Euclidean\)
    \\
    \bottomrule
  \end{tabular}
  \label{tbl:conv}
\end{table}
\egroup                         

We will often consider subsets of \(\Sym{V}\)
and~\(\Sym{\setlift{V}}\).  Subsets of the latter, as well as their
elements, shall be decorated with a hat, e.g., \(\Xh \in
\liftedQuadCone \subseteq \Sym{\setlift{V}}\).  If \(\QuadCone
\subseteq \Euclidean\) is a pointed closed convex cone with nonempty
interior, then \(\QuadCone\) defines a partial
order~\(\succeq_{\QuadCone}\) on~\(\Euclidean\): we write \(x
\succeq_{\QuadCone} y\) to mean that \(x - y \in \QuadCone\).  We set
\(\succeq\, \coloneqq\: \succeq_{\Psd{V}}\).

\section{Theta Bodies}
\label{sec:theta-convex-corners}

\bgroup
\newcommand*{\Cone}{\mathbb{K}}
For each \(h \in \Reals^V\), define the \emph{diagonal scaling map}
\(\DiagScaleMap{h} \ffrom \Reals^{V \times V} \fto \Reals^{V \times
  V}\) as
\begin{equation}
  \label{eq:diag-scale-map}
  \DiagScaleMap{h}(X)
  \coloneqq
  \Diag(h) X \Diag(h)
  \qquad
  \forall X \in \Reals^{V \times V}.
\end{equation}
Note that each entry \(\component{\DiagScaleMap{h}(X)}{ij}\) is the
componentwise product \(X_{ij}[\oprodsym{h}]_{ij}\) for every
\(X \in \Sym{V}\) and \(h \in \Reals^V\).  A subset~\(\Cone\)
of~\(\Sym{V}\) is called \emph{diagonally scaling-invariant} if
\(\DiagScaleMap{h}(\Cone) \subseteq \Cone\) for every \(h \in
\Reals_+^V\).  The cones \(\Psd{V}\), \(\Symnonneg{V}\),
\(\Copositive{V}\), \(\ComplPositive{V}\) are all examples of
diagonally scaling-invariant subsets of~\(\Sym{V}\).  Some other
important examples are the sets of the form
\begin{equation}
  \label{eq:poly-adj-cone}
  \PolyAdjCone{E^+}{E^-}
  \coloneqq
  \setst{
    X \in \Sym{V}
  }{
    X_{ij} \geq 0\,\forall ij \in E^+,\:
    X_{ij} \leq 0\,\forall ij \in E^-
  },
\end{equation}
where \(E^+,E^- \subseteq \tbinom{V}{2}\).  Clearly, every diagonally
scaling-invariant set is a cone, and since the map
\(\DiagScaleMap{h}\) is self-adjoint, diagonal scaling invariance is
preserved under duality.  Moreover,
    if \(\Cone \subseteq \Sym{V}\) is diagonally scaling-invariant,
    then
    \(\setst{\DiagScaleMap{h}}{h \in \Reals_{++}^V}
    \subseteq \Aut(\Cone)\).

For sets \(\AdjCone \subseteq \Sym{V}\) and \(\liftedQuadCone
\subseteq \Sym{\setlift{V}}\), define
\begin{equation}
  \label{eq:liftedGenTH}
  \liftedGenTH{\AdjCone}{\liftedQuadCone}
  \coloneqq
  \setst*{
    \Xh \in \liftedQuadCone
  }{
    \Xh_{00} = 1,\,
    \Xh e_0 = \diag(\Xh),\,
    \Xh[V] \in \AdjCone
  }
\end{equation}
and
\begin{equation}
  \label{eq:genTH}
  \genTH{\AdjCone}{\liftedQuadCone}
  \coloneqq
  \setst*{
    \diag(\Xh[V])
  }{
    \Xh \in \liftedGenTH{\AdjCone}{\liftedQuadCone}
  }.
\end{equation}
We are interested in sets of the form
\(\genTH{\AdjCone}{\liftedQuadCone}\), where \(\AdjCone\)
and~\(\liftedQuadCone\) are diagonally scaling-invariant convex cones
with a few extra properties.  The most important known examples of
sets of this form are the theta body~\(\TH(G)\) of a graph \(G =
(V,E)\) and its variants~\(\TH'(G)\) and~\(\TH^+(G)\).  In fact, we
define
\begin{subequations}
  \label{eq:THs-as-genTHs}
  \begin{gather}
    \label{eq:TH-as-genTH}
    \TH(G)
    \coloneqq
    \genTH[\big]{
      \PolyAdjCone{E}{E}
    }{
      \Psd{\setlift{V}}
    },
    \\
    \label{eq:TH-as-genTH'}
    \TH'(G)
    \coloneqq
    \genTH[\big]{
      \PolyAdjCone{E \cup \overline{E}}{E}
    }{
      \Psd{\setlift{V}}
    },
    \\
    \label{eq:TH-as-genTH+}
    \TH^+(G)
    \coloneqq
    \genTH[\big]{
      \PolyAdjCone{\emptyset}{E}
    }{
      \Psd{\setlift{V}}
    }.
  \end{gather}
\end{subequations}
It thus makes sense to call sets of the
form~\(\genTH{\AdjCone}{\liftedQuadCone}\) as \emph{theta bodies} (the
terminology ``theta bodies'' was also used for another generalization
of the theta body by \cite{GouveiaPT10a}; our definition and approach
are very different).  To avoid
confusion, whenever we refer to the specific theta body~\(\TH(G)\), we
shall call it \emph{the} theta body \emph{of~\(G\)}.
\egroup

In the remainder of this section we shall prove that, under certain
simple hypotheses, every theta body is a \emph{convex corner}, i.e., a
compact, lower-comprehensive convex subset of the nonnegative orthant
with nonempty interior.  Recall that a subset \(\ConvexSet\)
of~\(\Reals_+^V\) is called \emph{lower-comprehensive} if, for any
\(x,y \in \Reals^V\), the chain of relations \(0 \leq y \leq x \in
\ConvexSet\) implies \(y \in \ConvexSet\).  In what follows, the extra
hypotheses~\eqref{eq:A-big-enough} and~\eqref{eq:TH-AK-hypotheses-K}
on~\(\AdjCone\) and~\(\liftedQuadCone\) may be thought of as requiring
that~\(\AdjCone\) is not ``too small'', and that~\(\liftedQuadCone\)
is neither ``too small'' nor~``too big.''
\begin{proposition}
  \label{prop:TH-AK-almost-convex-corner}
  Let \(\AdjCone \subseteq \Sym{V}\) and \(\liftedQuadCone \subseteq
  \Sym{\setlift{V}}\) be diagonally scaling-invariant closed convex
  cones.  Suppose that \(\AdjCone\) satisfies
  \begin{equation}
    \label{eq:A-big-enough}
    \Image(\Diag) \subseteq \AdjCone,
  \end{equation}
  and suppose that \(\liftedQuadCone\) satisfies
  \begin{subequations}
    \label{eq:TH-AK-hypotheses-K}
    \begin{gather}
      \label{eq:lifted-K-big-enough}
      \liftedQuadCone
      \supseteq
      \setst*{
        \oprodsym{(e_0 + e_i)}
      }{
        i \in V
      }
      \shortintertext{and}
      \label{eq:lifted-K-not-too-big}
      \diag\paren[\big]{
        \setst[\big]{
          \Xh \in \liftedQuadCone
        }{
          \Xh_{00} = 1,\,
          \Xh e_0 = \diag(\Xh)
        }
      }
      \subseteq
      [0,1]^{\setlift{V}}.
    \end{gather}
  \end{subequations}
  Then \(\genTH{\AdjCone}{\liftedQuadCone}\) is a convex
  lower-comprehensive subset of~\([0,1]^V\) with nonempty interior.
  In particular,
  \(\cl\paren[\big]{\genTH{\AdjCone}{\liftedQuadCone}}\) is a convex
  corner.
\end{proposition}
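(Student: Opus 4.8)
The plan is to verify, one at a time, that $\genTH{\AdjCone}{\liftedQuadCone}$ is convex, that it is contained in $[0,1]^V$, that it is lower-comprehensive, and that it has nonempty interior; the assertion about the closure then follows easily. Convexity is immediate: $\liftedGenTH{\AdjCone}{\liftedQuadCone}$ is the intersection of the convex cone $\liftedQuadCone$ with the affine subspace $\setst{\Xh}{\Xh_{00}=1,\,\Xh e_0 = \diag(\Xh)}$ and with the preimage of the convex cone $\AdjCone$ under the linear map $\Xh \mapsto \Xh[V]$, hence it is convex, and $\genTH{\AdjCone}{\liftedQuadCone}$ is its image under the linear map $\Xh \mapsto \diag(\Xh[V])$. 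For the containment, if $x = \diag(\Xh[V])$ with $\Xh \in \liftedGenTH{\AdjCone}{\liftedQuadCone}$, then $x_i = \Xh_{ii} = \component{\diag(\Xh)}{i}$ for every $i \in V \subseteq \setlift{V}$, while by~\eqref{eq:lifted-K-not-too-big} we have $\diag(\Xh) \in [0,1]^{\setlift{V}}$ (since $\Xh \in \liftedQuadCone$ with $\Xh_{00}=1$ and $\Xh e_0 = \diag(\Xh)$); hence $x \in [0,1]^V$.

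The crux is lower-comprehensiveness, and it rests on the observation that, for each $S \subseteq V$, the diagonal scaling $\DiagScaleMap{\incidvector{\setlift{S}}}$ maps $\liftedGenTH{\AdjCone}{\liftedQuadCone}$ into itself. Indeed, writing $h \coloneqq \incidvector{\setlift{S}} \in \Reals_+^{\setlift{V}}$: the constraint $\Xh_{00}=1$ survives because $h_0 = 1$; the constraint $\Xh e_0 = \diag(\Xh)$, i.e.\ $\Xh_{i0} = \Xh_{ii}$ for $i \in V$, survives because scaling multiplies the $(i,0)$ entry by $h_i h_0 = h_i$ and the $(i,i)$ entry by $h_i^2 = h_i$ (using $h_i \in \{0,1\}$); the constraint $\Xh \in \liftedQuadCone$ survives by diagonal scaling-invariance of $\liftedQuadCone$; and the constraint $\Xh[V] \in \AdjCone$ survives because $\DiagScaleMap{h}(\Xh)[V] = \DiagScaleMap{\incidvector{S}}(\Xh[V])$ with $\incidvector{S} \in \Reals_+^V$ and $\AdjCone$ diagonally scaling-invariant. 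Moreover, $\DiagScaleMap{\incidvector{\setlift{S}}}$ acts on the extracted diagonal as $x \mapsto x \hprod \incidvector{S}$. Now take $x \in \genTH{\AdjCone}{\liftedQuadCone}$ with witness $\Xh \in \liftedGenTH{\AdjCone}{\liftedQuadCone}$, and let $0 \leq y \leq x$. Setting $t_i \coloneqq \Iverson{x_i \neq 0}(y_i/x_i)$ — which is $0$ when $x_i = 0$, by the strong-zero convention — we get $t \in [0,1]^V$ and $y = x \hprod t$. Write $t = \sum_{\ell} \alpha_\ell \incidvector{S_\ell}$ as a convex combination of $\{0,1\}$-valued vectors, possible because $[0,1]^V$ is the convex hull of $\{0,1\}^V$. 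Then $\sum_{\ell} \alpha_\ell \DiagScaleMap{\incidvector{\setlift{S_\ell}}}(\Xh)$ is a convex combination of elements of the convex set $\liftedGenTH{\AdjCone}{\liftedQuadCone}$, hence lies in it, and, by linearity of $\Xh \mapsto \diag(\Xh[V])$, it maps to $\sum_{\ell} \alpha_\ell (x \hprod \incidvector{S_\ell}) = x \hprod t = y$. Therefore $y \in \genTH{\AdjCone}{\liftedQuadCone}$.

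For nonempty interior, note that~\eqref{eq:lifted-K-big-enough} gives $\oprodsym{(e_0+e_i)} \in \liftedQuadCone$ for every $i \in V$; this matrix has $(0,0)$ entry $1$, satisfies $\oprodsym{(e_0+e_i)}\,e_0 = e_0+e_i = \diag\paren*{\oprodsym{(e_0+e_i)}}$, and has $V$-block $\Diag(e_i)$, which lies in $\Image(\Diag) \subseteq \AdjCone$ by~\eqref{eq:A-big-enough}; hence $\oprodsym{(e_0+e_i)} \in \liftedGenTH{\AdjCone}{\liftedQuadCone}$, and so $e_i \in \genTH{\AdjCone}{\liftedQuadCone}$. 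Also $0 \in \genTH{\AdjCone}{\liftedQuadCone}$, by lower-comprehensiveness (or directly, since $\oprodsym{e_0} = \DiagScaleMap{e_0}\paren*{\oprodsym{(e_0+e_i)}} \in \liftedGenTH{\AdjCone}{\liftedQuadCone}$). By convexity, $\genTH{\AdjCone}{\liftedQuadCone}$ therefore contains the full-dimensional simplex $\operatorname{conv}\paren[\big]{\set{0} \cup \setst{e_i}{i \in V}}$, so it has nonempty interior. Finally, $\cl\paren[\big]{\genTH{\AdjCone}{\liftedQuadCone}}$ is closed, convex, and bounded (it lies in $[0,1]^V$), hence compact; it is contained in $\Reals_+^V$ and has nonempty interior; and it is lower-comprehensive, since if $x = \lim_n x^{(n)}$ with each $x^{(n)} \in \genTH{\AdjCone}{\liftedQuadCone}$ and $0 \leq y \leq x$, then the vectors $y^{(n)}$ defined by $y^{(n)}_i \coloneqq \min\{y_i, x^{(n)}_i\}$ satisfy $0 \leq y^{(n)} \leq x^{(n)}$, hence lie in $\genTH{\AdjCone}{\liftedQuadCone}$, and $y^{(n)} \to \min\{y,x\} = y$. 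Thus $\cl\paren[\big]{\genTH{\AdjCone}{\liftedQuadCone}}$ is a convex corner.

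The one step requiring a genuine idea is lower-comprehensiveness. A naive attempt to rescale a witness for $x$ into a witness for a smaller point $y$ by a single diagonal scaling $\DiagScaleMap{h}$ fails, because preservation of $\Xh e_0 = \diag(\Xh)$ forces $h_i(h_0 - h_i) = 0$ for every $i \in V$ (and $\Xh_{00}=1$ forces $h_0 = 1$), so only the $0/1$ scalings $\DiagScaleMap{\incidvector{\setlift{S}}}$ are available; the way around this is to realize the order interval $\setst{x \hprod t}{t \in [0,1]^V}$ as the convex hull of the finitely many $0/1$ rescalings of $x$ and to invoke convexity of $\genTH{\AdjCone}{\liftedQuadCone}$. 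The remaining three properties are routine consequences of the diagonal scaling-invariance axiom together with the size hypotheses~\eqref{eq:A-big-enough} and~\eqref{eq:TH-AK-hypotheses-K}.
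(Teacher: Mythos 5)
Your proof is correct and follows essentially the same route as the paper: lower-comprehensiveness via diagonal scalings by $0/1$ vectors combined with convexity, and nonempty interior via the matrices $\oprodsym{(e_0+e_i)}$ together with \eqref{eq:A-big-enough}. You merely make explicit two steps the paper treats as routine (the reduction of lower-comprehensiveness to the $0/1$ rescalings, and the lower-comprehensiveness of the closure), and both of your arguments for these are sound.
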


\begin{proof}
  Convexity of the projection \(\genTH{\AdjCone}{\liftedQuadCone}\)
  follows from that of~\(\liftedGenTH{\AdjCone}{\liftedQuadCone}\).
  It is clear from~\eqref{eq:lifted-K-not-too-big} that
  \(\genTH{\AdjCone}{\liftedQuadCone} \subseteq [0,1]^V\).  To prove
  that the convex set~\(\genTH{\AdjCone}{\liftedQuadCone}\) is lower-comprehensive,
  it suffices to
  show that if \(x \in \genTH{\AdjCone}{\liftedQuadCone}\) then \(x -
  x_i e_i \in \genTH{\AdjCone}{\liftedQuadCone}\) for each \(i \in
  V\).  Let \(\Xh \in \liftedGenTH{\AdjCone}{\liftedQuadCone}\) such
  that \(x = \diag(\Xh[V])\).  Let \(i \in V\).  Set \(\Yh \coloneqq
  \DiagScaleMap{1 \oplus h}(\Xh) \in \liftedQuadCone\) for \(h
  \coloneqq \ones - e_i \in \Reals^V\).  Then diagonal scaling
  invariance of~\(\AdjCone\) and~\(\liftedQuadCone\) imply that \(\Yh
  \in \liftedGenTH{\AdjCone}{\liftedQuadCone}\).  Thus, \(x - x_i e_i
  = \diag(\Yh[V]) \in \genTH{\AdjCone}{\liftedQuadCone}\).  This
  proves that \(\genTH{\AdjCone}{\liftedQuadCone}\) is
  lower-comprehensive.  It remains to show that
  \(\genTH{\AdjCone}{\liftedQuadCone}\) has nonempty interior.  Let
  \(i \in V\).  By~\eqref{eq:A-big-enough}, we have \(\oprodsym{e_i}
  \in \AdjCone\).  Thus, \(\oprodsym{(e_0+e_i)} \in
  \liftedGenTH{\AdjCone}{\liftedQuadCone}\)
  by~\eqref{eq:lifted-K-big-enough} whence \(e_i \in
  \genTH{\AdjCone}{\liftedQuadCone}\).  Now convexity of
  \(\genTH{\AdjCone}{\liftedQuadCone}\) implies that \(\frac{1}{n}
  \ones \in \genTH{\AdjCone}{\liftedQuadCone}\), where \(n \coloneqq
  \card{V}\).  Since \(\genTH{\AdjCone}{\liftedQuadCone}\) is
  lower-comprehensive, we find that \(\frac{1}{2n} \ones \in
  \interior(\genTH{\AdjCone}{\liftedQuadCone})\).
\end{proof}

The reason for using the hypothesis~\eqref{eq:A-big-enough} rather
than the slightly weaker condition \(\Diag(\Reals_+^V) \subseteq \AdjCone\)
shall be made clearer in Section~\ref{sec:poly-diag-cones}, where we
develop a notion of duality for such cones.  As for the closedness of
theta bodies, we are not aware of any example of a theta body that is
not closed, so the closure operator in
Proposition~\ref{prop:TH-AK-almost-convex-corner} shall remain as a
minor nuisance.  We shall now see that, under a mild condition on the
cone~\(\liftedQuadCone\), the theta body
\(\genTH{\AdjCone}{\liftedQuadCone}\) is actually closed, and hence a
convex corner itself.
\begin{corollary}
  \label{cor:TH-AK-convex-corner}
  Let \(\AdjCone \subseteq \Sym{V}\) and \(\liftedQuadCone \subseteq
  \Sym{\setlift{V}}\) be diagonally scaling-invariant closed convex
  cones such that~\eqref{eq:A-big-enough}
  and~\eqref{eq:lifted-K-big-enough} hold.  If
  \begin{equation}
    \label{eq:lifted-K-not-too-big-suff}
    \liftedQuadCone
    \subseteq
    \setst*{
      \Xh \in \Sym{\setlift{V}}
    }{
      \Xh[S] \succeq 0,\,
      \forall S \in \tbinom{\setlift{V}}{2}
    },
  \end{equation}
  then \eqref{eq:lifted-K-not-too-big} holds and
  \(\liftedGenTH{\AdjCone}{\liftedQuadCone}\) is compact.  In
  particular, \(\genTH{\AdjCone}{\liftedQuadCone}\) is closed, and
  hence a convex corner.
\end{corollary}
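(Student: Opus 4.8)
The plan is to show that hypothesis~\eqref{eq:lifted-K-not-too-big-suff} implies both \eqref{eq:lifted-K-not-too-big} and the compactness of \(\liftedGenTH{\AdjCone}{\liftedQuadCone}\); the conclusion then follows from Proposition~\ref{prop:TH-AK-almost-convex-corner} (which gives that \(\genTH{\AdjCone}{\liftedQuadCone}\) is convex, lower-comprehensive, and has nonempty interior) together with the fact that the continuous image \(\genTH{\AdjCone}{\liftedQuadCone} = \diag\paren[\big]{\liftedGenTH{\AdjCone}{\liftedQuadCone}[V]}\) of a compact set is compact, hence closed.

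First I would verify \eqref{eq:lifted-K-not-too-big}. Let \(\Xh \in \liftedQuadCone\) with \(\Xh_{00} = 1\) and \(\Xh e_0 = \diag(\Xh)\); fix \(i \in V\) and consider \(S \coloneqq \set{0,i}\). By \eqref{eq:lifted-K-not-too-big-suff} the \(2\times 2\) principal submatrix \(\Xh[S]\) is positive semidefinite, so its diagonal entries \(\Xh_{00} = 1\) and \(\Xh_{ii}\) are nonnegative, and its determinant \(\Xh_{00}\Xh_{ii} - \Xh_{0i}^2 \geq 0\). Since \(\Xh e_0 = \diag(\Xh)\) gives \(\Xh_{0i} = \Xh_{ii}\), we get \(\Xh_{ii} - \Xh_{ii}^2 \geq 0\), i.e.\ \(\Xh_{ii} \in [0,1]\). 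As \(\Xh_{00} = 1\) as well, this shows \(\diag(\Xh) \in [0,1]^{\setlift{V}}\), which is exactly \eqref{eq:lifted-K-not-too-big}.

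Next I would prove compactness of \(\liftedGenTH{\AdjCone}{\liftedQuadCone}\). It is closed, being the intersection of the closed cone \(\liftedQuadCone\) with the closed affine conditions \(\Xh_{00} = 1\), \(\Xh e_0 = \diag(\Xh)\) and the closed condition \(\Xh[V] \in \AdjCone\) (using that \(\AdjCone\) is closed). For boundedness, let \(\Xh \in \liftedGenTH{\AdjCone}{\liftedQuadCone}\). The previous paragraph already bounds the diagonal: \(\Xh_{kk} \in [0,1]\) for all \(k \in \setlift{V}\). For an off-diagonal entry \(\Xh_{k\ell}\) with distinct \(k,\ell \in \setlift{V}\), apply \eqref{eq:lifted-K-not-too-big-suff} with \(S \coloneqq \set{k,\ell}\): the \(2 \times 2\) matrix \(\Xh[S] \succeq 0\) forces \(\Xh_{k\ell}^2 \leq \Xh_{kk}\Xh_{\ell\ell} \leq 1\), so \(\abs{\Xh_{k\ell}} \leq 1\). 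Thus every entry of \(\Xh\) lies in \([-1,1]\), so \(\liftedGenTH{\AdjCone}{\liftedQuadCone}\) is bounded, hence compact. Finally, \(\genTH{\AdjCone}{\liftedQuadCone}\) is the image of this compact set under the continuous map \(\Xh \mapsto \diag(\Xh[V])\), so it is closed; combined with Proposition~\ref{prop:TH-AK-almost-convex-corner} it is a convex corner.

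The only mildly delicate point is ensuring that all the defining conditions of \(\liftedGenTH{\AdjCone}{\liftedQuadCone}\) really are closed — in particular that the principal-submatrix extraction and the constraint \(\Xh[V] \in \AdjCone\) behave well — but this is routine since \(\Xh \mapsto \Xh[U]\) is linear (hence continuous) for each \(U\) and \(\AdjCone\) is assumed closed. No genuine obstacle arises; the argument is a short two-by-two minor computation plus a compactness-image observation.
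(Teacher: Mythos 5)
Your proposal is correct and follows essentially the same route as the paper: use the \(2\times 2\) principal submatrices containing \(0\) together with \(\Xh e_0 = \diag(\Xh)\) to get \(\Xh_{ii} - \Xh_{ii}^2 \geq 0\) and hence \eqref{eq:lifted-K-not-too-big}, then use the remaining \(2\times 2\) minors to bound the off-diagonal entries, conclude compactness of \(\liftedGenTH{\AdjCone}{\liftedQuadCone}\), and obtain closedness of \(\genTH{\AdjCone}{\liftedQuadCone}\) as a linear image of a compact set before invoking Proposition~\ref{prop:TH-AK-almost-convex-corner}. Your explicit verification that \(\liftedGenTH{\AdjCone}{\liftedQuadCone}\) is closed is a detail the paper leaves implicit, but it is the same argument.
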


\begin{proof}
  \newcommand*{\Cone}{\widehat{\mathbb{M}}}
  Let \(\Cone\) be the set of all~\(\Xh\) in the RHS
  of~\eqref{eq:lifted-K-not-too-big-suff} such that \(\Xh_{00} = 1\)
  and \(\Xh e_0 = \diag(\Xh)\).  Then~\(\Cone\) is bounded.  To see
  this, first use sets \(S \in \tbinom{\setlift{V}}{2}\)
  containing~\(0\) to show that \(\diag(\Cone) \subseteq
  [0,1]^{\setlift{V}}\).  Thus, \eqref{eq:lifted-K-not-too-big} holds.  Next,
  use~\eqref{eq:lifted-K-not-too-big-suff} with sets \(S \in
  \tbinom{V}{2}\) to show that all off-diagonal entries of \(\Xh \in
  \Cone\) have absolute value bounded above by~\(1\).  Since
  \(\liftedGenTH{\AdjCone}{\liftedQuadCone} \subseteq \Cone\), it
  follows that \(\liftedGenTH{\AdjCone}{\liftedQuadCone}\) is compact.
  Now closedness of \(\genTH{\AdjCone}{\liftedQuadCone}\) follows from
  the fact that \(\genTH{\AdjCone}{\liftedQuadCone}\) is a linear
  image of the compact set
  \(\liftedGenTH{\AdjCone}{\liftedQuadCone}\).  The rest follows from
  Proposition~\ref{prop:TH-AK-almost-convex-corner}.
\end{proof}

For most of the theta bodies in this paper, the cone
\(\liftedQuadCone\) shall be a subset of \(\Psd{\setlift{V}}\), and
hence~\eqref{eq:lifted-K-not-too-big-suff} shall be satisfied; this
already includes the theta bodies described
in~\eqref{eq:THs-as-genTHs}.
An important diagonally scaling-invariant closed convex cone which
does not satisfy~\eqref{eq:lifted-K-not-too-big-suff} is the cone of
copositive matrices; indeed, note that \(\Copositive{\setlift{V}} \supseteq \Symnonneg{\setlift{V}}\).  We shall deal with
theta bodies arising from the copositive cone in
Section~\ref{sec:TH-Cop}, where we shall prove directly that the
corresponding theta body is closed.

\section{Polyhedral Diagonally Scaling-Invariant Cones}
\label{sec:poly-diag-cones}

When studying a theta body \(\genTH{\AdjCone}{\liftedQuadCone}\), we
think of~\(\AdjCone\) as an ``elementary'' cone,
while~\(\liftedQuadCone\) is (potentially) a ``sophisticated'' cone.
In the most important instances of theta bodies, namely the ones
described in~\eqref{eq:THs-as-genTHs}, the cone~\(\AdjCone\) is
polyhedral, whereas \(\liftedQuadCone\) is the nonlinear
cone~\(\Psd{\setlift{V}}\).  In general, it makes sense to focus on
the case where~\(\AdjCone\) is polyhedral.  At any rate, when defining
a theta body \(\genTH{\AdjCone}{\liftedQuadCone}\), any trace of
``non-polyhedrality'' may be (and should be) ``pushed'' away
from~\(\AdjCone\) and into~\(\liftedQuadCone\).  We shall show next
that requiring a closed convex cone to be both diagonally
scaling-invariant and polyhedral severely constrains its structure.

We shall need a family of cones slightly more refined than the
cones~\(\PolyAdjCone{E^+}{E^-}\) defined in~\eqref{eq:poly-adj-cone}.
Let \(V^+,V^- \subseteq V\) and \(E^+, E^- \subseteq \binom{V}{2}\).
Define
\begin{equation*}
  \PolyAdjConeNode{V^+}{V^-}{E^+}{E^-}
  \coloneqq
  \setst*{
    X \in \PolyAdjCone{E^+}{E^-}
  }{
    \diag(X[V^+]) \geq 0,\,
    \diag(X[V^-]) \leq 0
  }.
\end{equation*}
Clearly, every set of this form is diagonally scaling-invariant and
polyhedral.  In fact, every polyhedral diagonally scaling-invariant
cone is of this form:
\begin{proposition}
  \label{prop:A-Diag-closed-polyhedral}
  Let \(\AdjCone \subseteq \Sym{V}\) be a diagonally scaling-invariant
  closed convex cone.  If~\(\AdjCone\) is polyhedral, then
  \(\AdjCone\) is of the form \(\AdjCone =
  \PolyAdjConeNode{V^+}{V^-}{E^+}{E^-}\) for some subsets \(V^+, V^-
  \subseteq V\) and \(E^+, E^- \subseteq \tbinom{V}{2}\).
\end{proposition}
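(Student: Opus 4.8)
The plan is to pass to the dual cone. Since $\AdjCone$ is a closed convex cone, $\AdjCone=\dual{(\dual{\AdjCone})}$, and $\dual{\AdjCone}$ is again polyhedral and --- as each diagonal scaling map $\DiagScaleMap{h}$ is self-adjoint --- again diagonally scaling-invariant. So it suffices to prove that $\dual{\AdjCone}$ is generated, as a convex cone, by the ``coordinate matrices'' lying in $\dual{\AdjCone}$ among $\setst{\pm\oprodsym{e_i}}{i\in V}\cup\setst{\pm E_{ij}}{ij\in\tbinom{V}{2}}$, where $E_{ij}\coloneqq\oprod{e_i}{e_j}+\oprod{e_j}{e_i}\in\Sym{V}$. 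Granting this, taking duals and using $\iprod{\oprodsym{e_i}}{X}=X_{ii}$ and $\iprod{E_{ij}}{X}=2X_{ij}$ exhibits $\AdjCone$ as the solution set of a system of inequalities each of the form $\pm X_{ii}\geq 0$ or $\pm X_{ij}\geq 0$, i.e., as $\PolyAdjConeNode{V^{+}}{V^{-}}{E^{+}}{E^{-}}$ for suitable $V^{\pm},E^{\pm}$.

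The one structural subtlety is that $\dual{\AdjCone}$ need not be pointed --- indeed $\AdjCone$ may be a linear subspace of $\Sym{V}$, as is each $\PolyAdjCone{E}{E}$ --- so I would first split off the lineality space $L\coloneqq\dual{\AdjCone}\cap(-\dual{\AdjCone})$. For any closed convex cone $K$ with lineality space $L$ there is a direct decomposition $K=L+C$ with $C\coloneqq K\cap L^{\perp}$ pointed and polyhedral: given $x\in K$, write $x=x_{L}+x_{\perp}$ with $x_{L}\in L$ and $x_{\perp}\in L^{\perp}$; then $x_{\perp}=x+(-x_{L})\in K$ because $-x_{L}\in L\subseteq K$, so $x_{\perp}\in C$. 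Moreover $L$, $L^{\perp}$, and hence $C$, are diagonally scaling-invariant (once more by self-adjointness of the $\DiagScaleMap{h}$). For the subspace $L$ I would use the following fact: \emph{a diagonally scaling-invariant linear subspace $W\subseteq\Sym{V}$ is the span of the coordinate matrices it contains.} Indeed, for $A\in W$ and $i\in V$ one has $\DiagScaleMap{e_i}(A)=A_{ii}\oprodsym{e_i}\in W$, and then, for distinct $i,j$, $\DiagScaleMap{e_i+e_j}(A)-A_{ii}\oprodsym{e_i}-A_{jj}\oprodsym{e_j}=A_{ij}E_{ij}\in W$; since $A=\sum_{i\in V}A_{ii}\oprodsym{e_i}+\sum_{ij\in\tbinom{V}{2}}A_{ij}E_{ij}$, the claim follows. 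In particular $L$ lies in the convex cone generated by the signed coordinate matrices it contains.

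For the pointed part $C$, I would use that a pointed polyhedral cone is the convex cone generated by its finitely many extreme rays, together with the fact that every $\DiagScaleMap{h}$ with $h\in\Reals_{++}^{V}$ is an automorphism of $C$ (diagonal scaling maps with strictly positive $h$ are automorphisms of any diagonally scaling-invariant cone). Such automorphisms permute the extreme rays of $C$, so the map sending $h\in\Reals_{++}^{V}$ to the extreme ray of $C$ containing $\DiagScaleMap{h}(A)$ is continuous (seen by normalizing) into a finite set, hence constant on the connected set $\Reals_{++}^{V}$; thus for each extreme ray generator $A$ of $C$ there is $\lambda(h)>0$ with $\DiagScaleMap{h}(A)=\lambda(h)A$ for all $h\in\Reals_{++}^{V}$. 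Comparing entries via $\component{\DiagScaleMap{h}(A)}{ij}=A_{ij}h_{i}h_{j}$ shows $A_{ij}\neq 0$ forces $h_{i}h_{j}=\lambda(h)$ for every $h$; fixing one nonzero entry $A_{pq}$ pins down $\lambda(h)=h_{p}h_{q}$, and a short case check --- separating $p=q$ from $p\neq q$ and varying a single coordinate $h_{k}$ --- then rules out every other nonzero entry. Hence $A$ is a nonzero multiple of $\oprodsym{e_{p}}$ or of $E_{pq}$, so every extreme ray of $C$ is generated by one of the $\pm\oprodsym{e_i}$ or $\pm E_{ij}$, and therefore $C$ too lies in the convex cone generated by the signed coordinate matrices it contains.

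Combining the two parts, $\dual{\AdjCone}=L+C$ equals the convex cone generated by those $\pm\oprodsym{e_i}$ and $\pm E_{ij}$ that lie in $\dual{\AdjCone}$. Taking duals and setting $V^{+}\coloneqq\setst{i\in V}{\oprodsym{e_i}\in\dual{\AdjCone}}$, $V^{-}\coloneqq\setst{i\in V}{-\oprodsym{e_i}\in\dual{\AdjCone}}$, $E^{+}\coloneqq\setst{ij\in\tbinom{V}{2}}{E_{ij}\in\dual{\AdjCone}}$, and $E^{-}\coloneqq\setst{ij\in\tbinom{V}{2}}{-E_{ij}\in\dual{\AdjCone}}$ then yields $\AdjCone=\PolyAdjConeNode{V^{+}}{V^{-}}{E^{+}}{E^{-}}$. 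The delicate point --- the one most easily overlooked --- is exactly this non-pointedness of $\dual{\AdjCone}$: without the separate treatment of its lineality space the extreme-ray argument is silent about subspace-type cones such as $\PolyAdjCone{E}{E}$. The entrywise case analysis for the extreme rays is elementary but is the computational heart of the proof.
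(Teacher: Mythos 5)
Your proof is correct, and while it shares the paper's overall strategy---pass to \(\dual{\AdjCone\!}\), which is again polyhedral and diagonally scaling-invariant, and show its generators are signed coordinate matrices---the mechanisms you use to pin down those generators are genuinely different. The paper first rules out two nonzero diagonal entries in an extreme-ray generator \(X\) by observing that the scalings \(\DiagScaleMap{h(t)}\) with \(h(t) = t e_i + t^{-1}e_j + \incidvector{V\drop\set{i,j}}\) would otherwise produce infinitely many extreme rays, contradicting polyhedrality; it then shows each component \(X_{ij}\Symmetrize(\oprod{e_i}{e_j})\) lies in \(\dual{\AdjCone\!}\) by taking the limit \(t\to\infty\) of \(\DiagScaleMap{h(t)}(X)\) and invoking closedness, and finally uses extremality of the ray to kill all but one component. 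You instead argue that the connected group \(\setst{\DiagScaleMap{h}}{h\in\Reals_{++}^V}\) of automorphisms must fix each extreme ray (continuity into a finite set), which forces \(\DiagScaleMap{h}(A)=\lambda(h)A\) and reduces everything to an entrywise computation with \(h_ih_j=\lambda(h)\). Both arguments work; the paper's limit trick avoids the connectedness argument, while yours avoids the limit. The more substantive difference is that you explicitly split off the lineality space of \(\dual{\AdjCone\!}\) and handle it by the masking identities \(\DiagScaleMap{e_i}(A)=A_{ii}\oprodsym{e_i}\) and \(\DiagScaleMap{e_i+e_j}(A)-A_{ii}\oprodsym{e_i}-A_{jj}\oprodsym{e_j}=A_{ij}E_{ij}\). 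This addresses a point the paper glosses over: its reduction to ``every extreme ray of \(\dual{\AdjCone\!}\) has coordinate form'' is, taken literally, insufficient when \(\AdjCone\) is not full-dimensional (e.g., \(\AdjCone=\PolyAdjCone{E}{E}\)), since then \(\dual{\AdjCone\!}\) is not pointed and is not generated by its extreme rays. Your treatment of the pointed part plus the lineality space closes that gap cleanly.
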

\begin{proof}
  Let \(\Symmetrize \ffrom \Reals^{V \times V} \fto \Sym{V}\) denote
  the orthogonal projection onto~\(\Sym{V}\), that is,
  \begin{equation}
    \label{eq:def-symmetrize}
    \Symmetrize(X)
    \coloneqq
    \thalf(X+X^{\transp})
    \qquad
    \forall X \in \Reals^{V \times V}.
  \end{equation}
  It suffices to show that
  \begin{equation}
    \label{eq:A-Diag-closed-polyhedral-aux1}
    \text{every extreme ray of }
    \dual{\AdjCone\!}
    \text{ is of the form }
    \pm\Reals_+\Symmetrize\paren{\oprod{e_i}{e_j}}
    \text{ for some }
    i,j \in V.
  \end{equation}
  We first show that,
  \begin{equation}
    \label{eq:A-Diag-closed-polyhedral-aux2}
    \text{%
      if \(\Reals_+X\) is an extreme ray of \(\dual{\AdjCone\!}\),
      then \(\card[\big]{\supp\paren[\big]{\diag(X)}} \leq 1\).
    }
  \end{equation}
  Suppose that \(\Reals_+X\) is an extreme ray
  of~\(\dual{\AdjCone\!}\) such that \(X_{ii} \neq 0 \neq X_{jj}\) for
  distinct \(i,j \in V\).  Since~\(\dual{\AdjCone\!}\) is also
  diagonally scaling-invariant, we have \(\setst{\Dcal_h}{h \in
    \Reals_{++}^V} \subseteq \Aut(\dual{\AdjCone\!})\).  For \(t \in
  \Reals_{++}\), define \(h(t) \coloneqq t e_i + t^{-1} e_j +
  \incidvector{V \drop \set{i,j}}\).  Thus, \(\setst{\Reals_+
    \DiagScaleMap{h(t)}(X)}{t \in \Reals_{++}}\) is an infinite set of
  extreme rays of~\(\dual{\AdjCone\!}\).  This contradicts the fact
  that~\(\dual{\AdjCone\!}\) is polyhedral and thus
  proves~\eqref{eq:A-Diag-closed-polyhedral-aux2}.

  To prove~\eqref{eq:A-Diag-closed-polyhedral-aux1}, let \(\Reals_+X\)
  be an extreme ray of~\(\dual{\AdjCone\!}\).  Let us show that
  \begin{equation}
    \label{eq:A-Diag-closed-polyhedral-aux3}
    X_{ij} \Symmetrize\paren{\oprod{e_i}{e_j}}
    \in \dual{\AdjCone\!}
    \qquad
    \forall i,j \in V.
  \end{equation}
  Let \(i,j \in V\).  If \(i = j\)
  then~\eqref{eq:A-Diag-closed-polyhedral-aux3} holds by diagonal
  scaling invariance of~\(\AdjCone\), so assume \(i \neq j\).
  By~\eqref{eq:A-Diag-closed-polyhedral-aux2}, at most one
  of~\(X_{ii}\) and~\(X_{jj}\) is nonzero.  We may assume by symmetry
  that \(X_{ii} = 0\).  For \(t \in \Reals_{++}\), define \(h(t)
  \coloneqq t e_i + t^{-1} e_j\) and note that
  \(\DiagScaleMap{h(t)}(X) \in \dual{\AdjCone\!}\) for every \(t \in
  \Reals_{++}\).  By driving~\(t\) to~\(\infty\) we find that \(2
  X_{ij} \Symmetrize\paren{\oprod{e_i}{e_j}} = \lim_{t\to \infty}
  \DiagScaleMap{h(t)}(X)\) lies in the closed
  set~\(\dual{\AdjCone\!}\).  This
  proves~\eqref{eq:A-Diag-closed-polyhedral-aux3}.

  Since
  \begin{equation}
    \label{eq:sym-decomp}
    X
    =
    \sum_{i,j \in V}
    2^{\Iverson{i \neq j}}
    X_{ij} \Symmetrize\paren{\oprod{e_i}{e_j}}
  \end{equation}
  and \(\Reals_+X\) is an extreme ray of~\(\dual{\AdjCone\!}\), it
  follows from~\eqref{eq:A-Diag-closed-polyhedral-aux3} that at most
  one of the terms in the RHS of~\eqref{eq:sym-decomp} is nonzero.
  This proves~\eqref{eq:A-Diag-closed-polyhedral-aux1} and concludes
  the proof.
\end{proof}

It follows that a polyhedral diagonally scaling-invariant cone
\(\AdjCone \subseteq \Sym{V}\) such that \(\Image(\Diag) \subseteq
\AdjCone\) must have the form shown in~\eqref{eq:poly-adj-cone}.  We
shall focus our attention on such cones for the first argument
of~\(\widehat{\TH}\) for the remainder of the paper.

We shall define some alternative notions of ``duality'' for the cones
\(\AdjCone\) and \(\liftedQuadCone\) to describe more conveniently the
antiblockers of theta bodies.  The appropriate duality notion for the
cones \(\AdjCone \subseteq \Sym{V}\) is defined as follows:
\begin{equation}
  \label{eq:dual-adjcone}
  \AdjDual{\AdjCone} \coloneqq \Image(\Diag) - \dual{\AdjCone\!}.
\end{equation}
It is easy to check that, for every \(E^+, E^- \subseteq
\tbinom{V}{2}\), we have
\begin{equation}
  \AdjDual{\PolyAdjCone{E^+}{E^-}}
  =
  \PolyAdjCone{\overline{E^+}}{\overline{E^-}}
\end{equation}
and, for every polyhedral diagonally scaling-invariant cone \(\AdjCone
\subseteq \Sym{V}\), we have \(\AdjDual{\AdjDual{\AdjCone}} = \AdjCone
+ \Image(\Diag)\), so this operation is an involution when restricted
to cones that contain \(\Image(\Diag)\).

\section{Liftings of Cones}
\label{sec:liftings}

In this section, we define two operators that lift a cone
in~\(\Sym{V}\) to a cone in~\(\Sym{\setlift{V}}\), as we briefly
mentioned in Subsection~\ref{sec:organization}.  Note that, in the
definition~\eqref{eq:liftedGenTH}, whenever we test membership of a
matrix~\(\hat{X}\) in~\(\liftedQuadCone\), the 0th column
of~\(\hat{X}\) is completely determined by~\(\hat{X}[V]\).  Thus, it
makes some sense to define a lifting of a cone~\(\QuadCone\)
in~\(\Sym{V}\) as a cone in~\(\Sym{\setlift{V}}\) in such a way that
the \(0\)th column is strongly related to~\(\QuadCone\).  We shall
need two such liftings, which are studied in the next two subsections.
We shall also see a glimpse of the duality relation involving these
two liftings in a Weak Duality theorem, as well as a natural description of the set
\(\FRAC(G)\) (that appeared in~\eqref{eq:STAB-chain}) as a theta body using one of
the liftings.

\subsection{PSD Liftings of Cones}
\label{sec:psd-liftings}

In this subsection, we define the PSD lifting of a cone~\(\QuadCone\)
in~\(\Sym{V}\) and we prove that, under mild hypotheses, the support
function of a theta body defined over the PSD lifting of \(\QuadCone\)
can be formulated as a simple conic optimization problem
over~\(\QuadCone\).  This shall correspond to the equivalence
\(\theta_3 = \theta_4\) in~\eqref{eq:theta-original-chain}.

Let \(\QuadCone \subseteq \Sym{V}\).  Define the \emph{PSD lifting
  of~\(\QuadCone\)} as
\begin{equation}
  \label{eq:psd-lift}
  \Psdlift{\QuadCone}
  \coloneqq
  \setst*{
    \Xh \in \Psd{\setlift{V}}
  }{
    \Xh[V] \in \QuadCone
  }.
\end{equation}
Note that if \(\QuadCone\) is diagonally scaling-invariant, then so
is~\(\Psdlift{\QuadCone}\).  Moreover, \(\Psdlift{\Psd{V}} =
\Psd{\setlift{V}}\).

Before using PSD liftings, we shall need the following straightforward
weighted generalization of \cite[Proposition~9]{Gijswijt05a}.
\bgroup
\newcommand*{\Cone}{\mathbb{M}}
\newcommand*{\Xopt}[1][]{X_{#1}^*}
\begin{lemma}
  \label{lemma:gijswijt-lambda-trick}
  Let \(\Cone \subseteq \Sym{V}\) be a diagonally scaling-invariant
  closed convex cone.  Suppose that
  \begin{subequations}
    \label{eq:gijswijt-lambda-trick-S-hyp}
    \begin{gather}
      \label{eq:gijswijt-lambda-trick-S-hyp1}
      \diag(\Cone) \subseteq \Reals_+^V,
      \\
      \label{eq:gijswijt-lambda-trick-S-hyp2}
      \text{if }X_{ii} = 0
      \text{ for some }X \in \Cone
      \text{ and }i \in V,
      \text{ then }X e_i = 0,
      \\
      \label{eq:gijswijt-lambda-trick-S-hyp3}
      \setst{X \in \Cone}{\trace(X) = 1}
      \text{ is compact}.
    \end{gather}
  \end{subequations}
  Let \(w \in \Reals_+^V\).  Let \(\Xopt\) be an optimal solution of
  \begin{equation}
    \label{eq:gijswijt-opt-problem}
    \max\setst*{
      \qform{X}{\sqrt{w}}
    }{
      \trace(X) = 1,\, X \in \Cone
    },
  \end{equation}
  and suppose that \(\qform{\Xopt}{\sqrt{w}} > 0\).  Set
  \begin{gather*}
    d
    \coloneqq
    \diag(\Xopt),
    \\
    \Xb
    \coloneqq
    \MPinverse{\Diag(\sqrt{d})} \Xopt \MPinverse{\Diag(\sqrt{d})},
    \\
    \lambda
    \coloneqq
    \lambdamax(\DiagScaleMap{\sqrt{w}}(\Xb)).
  \end{gather*}
  Then
  \begin{subequations}
    \label{eq:gijswijt-lambda-trick}
    \begin{gather}
      \label{eq:gijswijt-lambda-trick1}
      \supp(d) \subseteq \supp(w),
      \\
      \label{eq:gijswijt-lambda-trick2}
      \DiagScaleMap{\sqrt{w}} (\Xb)
      \sqrt{d}
      =
      \lambda
      \sqrt{d},
      \\
      \label{eq:gijswijt-lambda-trick3}
      \lambda
      =
      \qform{\Xopt}{\sqrt{w}},
      \\
      \label{eq:gijswijt-lambda-trick4}
      \Xopt \sqrt{w}
      =
      \lambda
      \MPinverse{
        \Diag(\sqrt{w})
      }
      d.
    \end{gather}
  \end{subequations}
\end{lemma}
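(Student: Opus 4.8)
The plan is to let the diagonal scaling invariance of $\Cone$ do all the work: optimality of $\Xopt$ relative to its own diagonal rescalings will force each of the four conclusions. Throughout I write $\mu \coloneqq \qform{\Xopt}{\sqrt{w}}$ (positive by hypothesis), $N \coloneqq \DiagScaleMap{\sqrt{w}}(\Xb)$ (so $\lambda = \lambdamax(N)$), and $S \coloneqq \supp(d)$; note $d = \diag(\Xopt) \in \Reals_+^V$ by~\eqref{eq:gijswijt-lambda-trick-S-hyp1} and $\sum_i d_i = \trace(\Xopt) = 1$, so $d \neq 0$. To prove~\eqref{eq:gijswijt-lambda-trick1} I would argue by contradiction: if $w_i = 0$ but $d_i > 0$, then $Y \coloneqq \DiagScaleMap{\ones - e_i}(\Xopt)$ lies in $\Cone$ and has objective value $\qform{Y}{\sqrt{w}} = \mu$ (since $(\ones - e_i) \hprod \sqrt{w} = \sqrt{w}$) but trace $1 - d_i \in [0,1)$; if this trace is $0$ then $\diag(Y) = 0$ and~\eqref{eq:gijswijt-lambda-trick-S-hyp2} forces $Y = 0$, contradicting $\mu > 0$, while otherwise $Y/\trace(Y)$ is feasible for~\eqref{eq:gijswijt-opt-problem} with objective $\mu/\trace(Y) > \mu$, contradicting optimality of $\Xopt$. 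Hence $S \subseteq \supp(w)$.

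Next I would reduce to the block indexed by $S$. Hypothesis~\eqref{eq:gijswijt-lambda-trick-S-hyp2} forces $\Xopt_{ij} = 0$ whenever $i \notin S$ or $j \notin S$; since $\MPinverse{\Diag(\sqrt{d})}$ is a nonnegative diagonal matrix, this yields $\Xb \in \Cone$ (by diagonal scaling invariance), $\Xopt = \DiagScaleMap{\sqrt{d}}(\Xb)$, and $\diag(\Xb) = \incidvector{S}$; in particular $N$ too vanishes off the $S$-block. The key identity is that, for $s \in \Reals_+^V$ supported on $S$, rescaling $\Xopt$ by the nonnegative vector $\MPinverse{\Diag(\sqrt{d})}s$ produces $\DiagScaleMap{s}(\Xb) \in \Cone$ with $\trace(\DiagScaleMap{s}(\Xb)) = \norm{s}^2$ and $\qform{\DiagScaleMap{s}(\Xb)}{\sqrt{w}} = \qform{N}{s}$. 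Feasibility of $\DiagScaleMap{s}(\Xb)/\norm{s}^2$ for~\eqref{eq:gijswijt-opt-problem} when $s \neq 0$ then gives $\qform{N}{s} \leq \mu\norm{s}^2$ for all such $s$, with equality at $s = \sqrt{d}$ (there $\norm{\sqrt{d}}^2 = \trace(\Xopt) = 1$ and $\DiagScaleMap{\sqrt{d}}(\Xb) = \Xopt$). Equivalently, on the subspace $E \coloneqq \linspan\setst{e_i}{i \in S}$ the Rayleigh quotient $s \mapsto \qform{N}{s}/\norm{s}^2$ is maximized over the nonnegative vectors of $E$, at $\sqrt{d}$, with value $\mu$.

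The step I expect to be the crux is promoting this to $\lambda = \mu$, i.e., showing $\sqrt{d}$ is a top eigenvector of $N$. Since $d_i > 0$ for $i \in S$, the vector $\sqrt{d}$ is a relative interior point of $\Reals_+^V \cap E$ within $E$, hence a genuine local maximizer of the smooth Rayleigh quotient on $E \setminus \{0\}$; the first-order condition (its gradient at $\sqrt{d}$ equals $2\paren{N\sqrt{d} - \mu\sqrt{d}}$, a vector of $E$) therefore gives $N\sqrt{d} = \mu\sqrt{d}$, so $\lambda = \lambdamax(N) \geq \mu$. For the reverse inequality, suppose $\lambda > \mu$; since $\lambda > 0$ and $N$ vanishes off the $S$-block, there is a unit eigenvector $v \in E$ with $Nv = \lambda v$, and $\iprod{v}{\sqrt{d}} = 0$ since the eigenvalues differ. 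Then for small $t > 0$ the vector $\sqrt{d} + tv$ is nonnegative and lies in $E$, with Rayleigh quotient $\paren{\mu + t^2\lambda}/\paren{1 + t^2} > \mu$, contradicting the bound just established. Hence $\lambda = \mu = \qform{\Xopt}{\sqrt{w}}$, which is~\eqref{eq:gijswijt-lambda-trick3}, and $\DiagScaleMap{\sqrt{w}}(\Xb)\sqrt{d} = N\sqrt{d} = \mu\sqrt{d} = \lambda\sqrt{d}$, which is~\eqref{eq:gijswijt-lambda-trick2}.

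For~\eqref{eq:gijswijt-lambda-trick4} I would expand $\lambda\sqrt{d} = N\sqrt{d} = \Diag(\sqrt{w})\,\Xb\,(\sqrt{w} \hprod \sqrt{d})$; the vector $\Xb(\sqrt{w}\hprod\sqrt{d})$ is supported on $S \subseteq \supp(w)$ because $\Xb$ vanishes off the $S$-block, so applying $\MPinverse{\Diag(\sqrt{w})}$ gives $\Xb(\sqrt{w}\hprod\sqrt{d}) = \lambda\MPinverse{\Diag(\sqrt{w})}\sqrt{d}$. Then, using $\Xopt = \DiagScaleMap{\sqrt{d}}(\Xb)$ and that diagonal matrices commute, $\Xopt\sqrt{w} = \Diag(\sqrt{d})\,\Xb\,(\sqrt{d}\hprod\sqrt{w}) = \lambda\,\Diag(\sqrt{d})\MPinverse{\Diag(\sqrt{w})}\sqrt{d} = \lambda\MPinverse{\Diag(\sqrt{w})}(\sqrt{d}\hprod\sqrt{d}) = \lambda\MPinverse{\Diag(\sqrt{w})}d$. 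Apart from the eigenvalue argument in the third paragraph, every step is routine bookkeeping with diagonal rescalings, supports, and pseudoinverses.
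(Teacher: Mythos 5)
Your proof is correct and follows essentially the same route as the paper's: diagonal scaling invariance converts optimality of \(X^*\) into optimality of \(\sqrt{d}\) for the Rayleigh quotient of \(\DiagScaleMap{\sqrt{w}}(\Xb)\) over the nonnegative vectors, and positivity of \(\sqrt{d}\) on its support then forces \(\sqrt{d}\) to be a top eigenvector, after which~\eqref{eq:gijswijt-lambda-trick4} follows by unrolling the diagonal rescalings. The only differences are minor: the paper removes the coordinates with \(d_i = 0\) by induction on \(\card{V}\) where you restrict to the block indexed by \(\supp(d)\) directly, and it invokes without proof the standard fact that a local maximizer of a Rayleigh quotient on the sphere is global where you verify it explicitly via the perturbation \(\sqrt{d} + tv\).
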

\begin{proof}
  We first show~\eqref{eq:gijswijt-lambda-trick1}.  Let \(i \in
  \supp(d)\), so that \(\Xopt[ii] > 0\).  Suppose that \(w_i = 0\).
  If \(\Xopt[ii] = 1\), then \(\Xopt = \oprodsym{e_i}\)
  by~\eqref{eq:gijswijt-lambda-trick-S-hyp} whence
  \(\qform{\Xopt}{\sqrt{w}} = 0\).  If \(\Xopt[ii] < 1\), then
  \((1-\Xopt[ii])^{-1} \DiagScaleMap{\ones - e_i}(\Xopt)\) is feasible
  for~\eqref{eq:gijswijt-opt-problem} with objective value
  \((1-\Xopt[ii])^{-1} \qform{\Xopt}{\sqrt{w}}\), hence strictly
  larger than the objective value of~\(\Xopt\).  In either case, we
  get a contradiction.  This proves~\eqref{eq:gijswijt-lambda-trick1}.

  If \(d_i = 0\) for some \(i \in V\), we are done by induction
  on~\(\card{V}\).  Thus, from~\eqref{eq:gijswijt-lambda-trick1} we
  may assume that
  \begin{equation}
    \label{eq:theta3-opt-diag-aux2}
    \supp(d) = \supp(w) = V.
  \end{equation}
  Define \(d^{-1/2} \in \Reals^V\) such that \(d^{-1/2} \hprod
  \sqrt{d} = \ones\), so \(\Xb = \DiagScaleMap{d^{-1/2}}(\Xopt) \in
  \Cone\) and \(\diag(\Xb) = \ones\).  For every \(h \in \Reals_+^V\)
  with \(\norm{h} = 1\), the point \(\DiagScaleMap{h}(\Xb)\) is
  feasible for~\eqref{eq:gijswijt-opt-problem} with objective value
  \(\qform{\DiagScaleMap{h}(\Xb)}{\sqrt{w}} =
  \qform{\DiagScaleMap{\sqrt{w}}(\Xb)}{h}\).  Since \(\Xopt =
  \DiagScaleMap{\sqrt{d}}\paren{\Xb}\) is optimal
  for~\eqref{eq:gijswijt-opt-problem}, it follows that \(\sqrt{d}\) is
  an optimal solution for
  \(\max\setst*{\qform{\DiagScaleMap{\sqrt{w}}(\Xb)}{h}}{h \in
    \Reals_+^V,\, \norm{h} = 1}\).  In fact, since
  \(\component{\sqrt{d}\thinspace}{i} > 0\) for all \(i \in V\), we
  find that \(\sqrt{d}\) is a local optimal solution for
  \(\max\setst*{\qform{\DiagScaleMap{\sqrt{w}}(\Xb)}{h}}{h \in
    \Reals^V,\, \norm{h} = 1}\), hence also a global one (note that
  the sign of \(h\) is unconstrained here).  Thus,
  \(\DiagScaleMap{\sqrt{w}}(\Xb) \sqrt{d} = \lambda \sqrt{d}\).  This
  proves~\eqref{eq:gijswijt-lambda-trick2}.  Now we unroll:
  \begin{equation*}
    \begin{split}
      \lambda d
      & =
      \lambda \Diag(\sqrt{d}) \sqrt{d}
      =
      \Diag(\sqrt{d})
      \DiagScaleMap{\sqrt{w}} \paren*{
        \DiagScaleMap{d^{-1/2}}(\Xopt)
      }
      \sqrt{d}
      =
      \Diag(\sqrt{d})
      \DiagScaleMap{d^{-1/2}} \paren*{
        \DiagScaleMap{\sqrt{w}}(\Xopt)
      }
      \sqrt{d}
      \\
      & =
      \Diag(\sqrt{d})
      \DiagScale{
        \DiagScaleMap{\sqrt{w}}(\Xopt)
      }{d^{-1/2}}
      \sqrt{d}
      =
      \DiagScale{\Xopt}{\sqrt{w}} \ones
      =
      \Diag(\sqrt{w}) \Xopt \sqrt{w}.
    \end{split}
  \end{equation*}
  This proves~\eqref{eq:gijswijt-lambda-trick4}.  Finally, \(\lambda =
  \lambda \trace(\Xopt) = \lambda \iprodt{\ones}{d} = \ones^{\transp}
  \Diag(\sqrt{w}) \Xopt \sqrt{w} = \qform{\Xopt}{\sqrt{w}}\)
  so~\eqref{eq:gijswijt-lambda-trick3} is proved.
\end{proof}
\egroup

We can now show that the support function of some theta bodies of the
form \(\genTH{\AdjCone}{\Psdlift\QuadCone}\), which shall correspond
to~\(\theta_4\) in~\eqref{eq:theta-original-chain}, may be formulated
as a conic optimization problem over~\(\QuadCone\); the latter shall
correspond to~\(\theta_3\) from~\eqref{eq:theta-original-chain}.  Note
that the next result does not make use of Duality Theory.
\begin{theorem}
  \label{thm:theta3-theta4}
  Let \(\AdjCone \subseteq \Sym{V}\) and~\(\QuadCone \subseteq
  \Psd{V}\) be diagonally scaling-invariant closed convex cones such that \(\Image(\Diag) \subseteq \AdjCone\) and \(\Diag(\Reals_+^V) \subseteq \QuadCone\).  Let
  \(w \in \Reals_+^V\).  Then
  \begin{equation}
    \label{eq:theta3-theta4}
    \suppf{
      \genTH{\AdjCone}{\Psdlift{\QuadCone}}
    }{
      w
    }
    =
    \max\setst*{
      \iprod{
        \oprodsym{\sqrt{w}}
      }{
        X
      }
    }{
      \iprod{I}{X} = 1,\,
      X \in \AdjCone,\,
      X \in \QuadCone
    }.
  \end{equation}
  Moreover, both optimization problems in~\eqref{eq:theta3-theta4} have optimal solutions.
\end{theorem}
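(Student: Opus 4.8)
Let $\tau$ denote the right-hand side of~\eqref{eq:theta3-theta4}. The plan is to establish the inequalities $\suppf{\genTH{\AdjCone}{\Psdlift{\QuadCone}}}{w} \leq \tau$ and $\tau \leq \suppf{\genTH{\AdjCone}{\Psdlift{\QuadCone}}}{w}$ separately; only the latter will use Lemma~\ref{lemma:gijswijt-lambda-trick}, and neither uses conic duality. Set $\mathbb{M} \coloneqq \AdjCone \cap \QuadCone$, a diagonally scaling-invariant closed convex cone with $\mathbb{M} \subseteq \Psd{V}$. As a preliminary: $I = \Diag(\ones)$ lies in both $\Image(\Diag) \subseteq \AdjCone$ and $\Diag(\Reals_+^V) \subseteq \QuadCone$, so $\tfrac{1}{n}I$ (with $n \coloneqq \card{V}$) is feasible for the problem defining $\tau$; moreover $\setst{X \in \mathbb{M}}{\trace(X) = 1}$ is compact, being a closed subset of the compact set $\setst{X \in \Psd{V}}{\trace(X) = 1}$. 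Hence the supremum defining $\tau$ is attained and $\tau \geq \tfrac{1}{n}\sum_{i \in V} w_i \geq 0$.

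For $\suppf{\genTH{\AdjCone}{\Psdlift{\QuadCone}}}{w} \leq \tau$, take $x \in \genTH{\AdjCone}{\Psdlift{\QuadCone}}$, witnessed by some $\Xh = \onelift{x}{X} \in \Psdlift{\QuadCone}$ with $X = \Xh[V] \in \AdjCone$ and $\diag(X) = x$; we may assume $\iprod{w}{x} > 0$. Applying the Schur-complement criterion to $\Xh \succeq 0$ at its $(0,0)$-entry gives $X \succeq \oprodsym{x}$, hence $\qform{X}{w} \geq \qform{(\oprodsym{x})}{w} = \iprod{w}{x}^{2}$. Then $X' \coloneqq \iprod{w}{x}^{-1}\,\DiagScaleMap{\sqrt{w}}(X)$ belongs to $\mathbb{M}$ by diagonal scaling invariance, satisfies $\trace(X') = \iprod{w}{x}^{-1}\sum_i w_i X_{ii} = 1$ (using $\diag(X) = x$), and has $\iprod{\oprodsym{\sqrt{w}}}{X'} = \iprod{w}{x}^{-1}\qform{X}{w} \geq \iprod{w}{x}$. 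Taking the supremum over $x$ gives the claimed inequality.

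For the reverse inequality, let $X^*$ attain $\tau$. If $\tau = 0$ we are done, since $0 \in \genTH{\AdjCone}{\Psdlift{\QuadCone}}$ (witnessed by $\oprodsym{e_0}$); so assume $\qform{X^*}{\sqrt{w}} = \tau > 0$ and apply Lemma~\ref{lemma:gijswijt-lambda-trick} to $\mathbb{M}$. Its hypotheses hold: \eqref{eq:gijswijt-lambda-trick-S-hyp1} because $\mathbb{M} \subseteq \Psd{V}$; \eqref{eq:gijswijt-lambda-trick-S-hyp2} because a positive semidefinite matrix with a zero diagonal entry has the corresponding row equal to~$0$; and~\eqref{eq:gijswijt-lambda-trick-S-hyp3} was noted above. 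Write $d \coloneqq \diag(X^*)$, let $\Xb$ and $\lambda$ be as in the lemma, and set $N \coloneqq \DiagScaleMap{\sqrt{w}}(\Xb)$. The outputs I use are $N\sqrt{d} = \lambda\sqrt{d}$ (from~\eqref{eq:gijswijt-lambda-trick2}), $\supp(d) \subseteq \supp(w)$ (from~\eqref{eq:gijswijt-lambda-trick1}), and $\lambda = \qform{X^*}{\sqrt{w}} = \tau$ (from~\eqref{eq:gijswijt-lambda-trick3}). Since $\Xb = \DiagScaleMap{h}(X^*) \in \mathbb{M}$ for the nonnegative vector $h_i \coloneqq \Iverson{d_i \neq 0}/\sqrt{d_i}$, also $N \in \mathbb{M}$; moreover $N \succeq 0$, $\lambda = \lambdamax(N)$, and $\norm{\sqrt{d}}^{2} = \iprodt{\ones}{d} = \trace(X^*) = 1$, so $\sqrt{d}$ is a \emph{unit} top eigenvector of $N$, whence $N' \coloneqq N - \lambda\oprodsym{\sqrt{d}} \succeq 0$. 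Now put $g_i \coloneqq \Iverson{w_i \neq 0}\sqrt{\lambda d_i}/w_i$, $X \coloneqq \DiagScaleMap{g}(N) \in \mathbb{M}$, and $x \coloneqq \diag(X)$. A routine computation (using $\supp(d) \subseteq \supp(w)$ and $\Xb_{ii} = \Iverson{d_i \neq 0}$) yields $x = \sqrt{\lambda}\,(g \hprod \sqrt{d})$, hence $\oprodsym{x} = \lambda\oprodsym{(g \hprod \sqrt{d})} = \DiagScaleMap{g}(\lambda\oprodsym{\sqrt{d}})$ and therefore $X - \oprodsym{x} = \DiagScaleMap{g}(N') = \Diag(g)N'\Diag(g) \succeq 0$. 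By the Schur-complement criterion, $\Xh \coloneqq \onelift{x}{X} \succeq 0$, so $\Xh \in \liftedGenTH{\AdjCone}{\Psdlift{\QuadCone}}$ (the remaining defining conditions $\Xh[V] = X \in \AdjCone$, $\Xh_{00} = 1$, $\Xh e_0 = \diag(\Xh) = 1 \oplus x$, and $\Xh[V] \in \QuadCone$ being immediate); thus $x \in \genTH{\AdjCone}{\Psdlift{\QuadCone}}$ with $\iprod{w}{x} = \sum_i w_i x_i = \lambda\sum_i d_i = \lambda\trace(X^*) = \lambda = \tau$. This construction also shows the supremum on the left of~\eqref{eq:theta3-theta4} is attained (alternatively, $\genTH{\AdjCone}{\Psdlift{\QuadCone}}$ is compact by Corollary~\ref{cor:TH-AK-convex-corner}, whose hypotheses hold here since $\oprodsym{e_i} = \Diag(e_i) \in \QuadCone$ for each $i$ and $\Psdlift{\QuadCone} \subseteq \Psd{\setlift{V}}$).

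The step I expect to be the main obstacle is the construction in the previous paragraph: extracting from an optimal $X^*$ of the conic program a matrix $X$ whose Schur complement certifies that $x = \diag(X)$ lies in the theta body. Its heart is the ``$\lambda$-trick'' isolated in Lemma~\ref{lemma:gijswijt-lambda-trick} --- that $\sqrt{d}$ is exactly a top eigenvector of $\DiagScaleMap{\sqrt{w}}(\Xb)$ --- which is precisely what makes $N - \lambda\oprodsym{\sqrt{d}}$ positive semidefinite and lets the diagonal scaling by $g$ fix the diagonal to $x$ while preserving positive semidefiniteness. A secondary, purely bookkeeping annoyance is coordinates $i$ with $d_i = 0$ or $w_i = 0$, absorbed by the ``strongly zero'' Iverson convention (equivalently, by the support reduction already performed inside the lemma).
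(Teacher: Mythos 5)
Your proof is correct, and while your easy inequality and the overall architecture (no conic duality; Lemma~\ref{lemma:gijswijt-lambda-trick} applied to an optimizer of the right-hand side) coincide with the paper's, your treatment of the hard inequality is genuinely different. The paper first proves, for \emph{any} feasible $X$ with $X\sqrt{w}\geq 0$, that $\qform{X}{\sqrt{w}}\leq \trace(X)\,\iprod{w}{y}$ for some $y$ in the theta body; the certificate there is an explicit Gram factorization $X=B^{\transp}B$ followed by a rescaling of the columns of $B$ and a Cauchy--Schwarz estimate, and Lemma~\ref{lemma:gijswijt-lambda-trick} enters only through conclusion~\eqref{eq:gijswijt-lambda-trick4}, to guarantee that the optimal $X^{*}$ satisfies $X^{*}\sqrt{w}\geq 0$. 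You instead exploit the stronger conclusion~\eqref{eq:gijswijt-lambda-trick2}: since $\sqrt{d}$ is a \emph{unit top} eigenvector of $N=\DiagScaleMap{\sqrt{w}}(\Xb)$, the matrix $N-\lambda\oprodsym{\sqrt{d}}$ is positive semidefinite, and a single further diagonal scaling converts this into the Schur-complement certificate $X\succeq\oprodsym{x}$ with $\iprod{w}{x}=\lambda=\tau$ exactly. Your route buys a shorter argument and immediate attainment of the left-hand side (with equality at the optimum rather than an inequality to be closed), at the price of working only at the optimal $X^{*}$, since positive semidefiniteness of $N-\lambda\oprodsym{\sqrt{d}}$ is precisely the optimality condition; the paper's intermediate claim~\eqref{eq:theta3-theta4-aux1} is more flexible, converting any feasible $X$ with $X\sqrt{w}\geq 0$ into a lower bound on the support function. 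The details I checked all go through, including the bookkeeping for coordinates with $d_i=0$ or $w_i=0$ and the verification of the hypotheses of Lemma~\ref{lemma:gijswijt-lambda-trick} for the cone $\AdjCone\cap\QuadCone$.
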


\begin{proof}
  We begin by proving~`\(\leq\)'.  Let \(y \in
  \genTH{\AdjCone}{\Psdlift{\QuadCone}}\) and let \(\Yh \in
  \liftedGenTH{\AdjCone}{\Psdlift{\QuadCone}}\) such that \(y =
  \diag(Y)\) for \(Y \coloneqq \Yh[V]\).  We will show that there
  exists a feasible solution~\(X\) for the RHS
  of~\eqref{eq:theta3-theta4} with objective value at
  least~\(\iprod{w}{y}\).  We may assume that \(\iprod{w}{y} > 0\);
  otherwise, take \(X = \oprodsym{e_i}\) for any \(i \in V\).  Set \(h
  \coloneqq \iprod{w}{y}^{-1/2} \sqrt{w} \geq 0\) and \(X \coloneqq
  \DiagScaleMap{h}(Y) \in \AdjCone \cap \QuadCone\).  Then \(\trace(X)
  = {\iprod{w}{y}}^{-1} \trace\paren{ \DiagScaleMap{\sqrt{w}}(Y) } =
  {\iprod{w}{y}}^{-1} \iprod{ \sqrt{w} \hprod \sqrt{w} }{ \diag(Y) } =
  1\), whence \(X\) is feasible on the RHS
  of~\eqref{eq:theta3-theta4}.  Moreover,
  \begin{equation*}
    \begin{bmatrix}
      1          & (h \hprod y)^{\transp}\thinspace \\
      h \hprod y & X                                \\
    \end{bmatrix}
    =
    \DiagScaleMap{1 \oplus h}
    \paren*{\onelift{y}{Y}}
    \in
    \DiagScaleMap{1 \oplus h}
    \paren*{\Psdlift{\QuadCone}}
    \subseteq
    \Psdlift{\QuadCone}
    \subseteq
    \Psd{\setlift{V}}.
  \end{equation*}
  Thus, by Schur complement, we get \(X \succeq
  \iprod{w}{y}^{-1}\oprodsym{\paren{\sqrt{w} \hprod y}}\) and so
  \begin{equation*}
    \qform{X}{\sqrt{w}}
    \geq
    {\iprod{w}{y}}^{-1}
    \qform{
      \oprodsym{
        \paren*{
          \Diag(\sqrt{w}\thinspace) y
        }
      }
    }{
      \sqrt{w}
    }
    =
    \frac{1}{\iprod{w}{y}}
    \iprod{w}{y}^2.
  \end{equation*}
  This completes the proof of~`\(\leq\)'.

  Now we prove~`\(\geq\)'.  For that, we will show that,
  \begin{equation}
    \label{eq:theta3-theta4-aux1}
    \text{if }X \in \AdjCone \cap \QuadCone
    \text{ and }X\sqrt{w} \geq 0,
    \text{ then }
    \qform{X}{\sqrt{w}} \leq \sqbrac{\trace(X)}\iprod{w}{y}
    \text{ for some }y \in \genTH{\AdjCone}{\Psdlift{\QuadCone}}.
  \end{equation}
  So, let \(X \in \AdjCone \cap \QuadCone\) such that \(X \sqrt{w}
  \geq 0\).  We may assume that \(\qform{X}{\sqrt{w}} > 0\); otherwise
  take \(y = 0\).  Since \(X \in \QuadCone \subseteq \Psd{V}\), there
  exists \(B \in \Reals^{V\times V}\) such that \(X = B^{\transp} B\).
  Define
  \begin{gather*}
    c
    \coloneqq
    \paren[\big]{
      \qform{X}{\sqrt{w}}
    }^{-1/2}
    B\sqrt{w},
    \\
    d
    \coloneqq
    \diag(X),
    \\
    \Bt
    \coloneqq
    B \MPinverse{\sqbrac[\big]{\Diag(\sqrt{d})}},
    \\
    \Bb
    \coloneqq
    \Bt \Diag(\Bt^{\transp} c),
    \\
    y
    \coloneqq
    \Bb^{\transp} c
    =
    \Diag(\Bt^{\transp} c) \Bt^{\transp} c
    =
    \paren{\Bt^{\transp} c} \hprod \paren{\Bt^{\transp} c}.
  \end{gather*}

  We will show that
  \begin{equation}
    \label{eq:theta3-theta4-aux2}
    y \in \genTH{\AdjCone}{\Psdlift{\QuadCone}}.
  \end{equation}
  Set \(Y \coloneqq \Bb^{\transp} \Bb\) and note that
  \begin{equation}
    \label{eq:psdlift-name}
    \Yh
    \coloneqq
    \onelift{y}{Y}
    =
    \begin{bmatrix}
      1               & c^{\transp} \Bb   \\
      \Bb^{\transp} c & \Bb^{\transp} \Bb \\
    \end{bmatrix}
    =
    \begin{bmatrix}
      c^{\transp}   \\
      \Bb^{\transp} \\
    \end{bmatrix}
    \begin{bmatrix}
      c & \Bb \\
    \end{bmatrix}
    \in
    \Psdlift{\QuadCone};
  \end{equation}
  to see that \(Y\) lies in \(\QuadCone \cap \AdjCone\), note that \(Y =
  \DiagScaleMap{h}(X)\) for some \(h \geq 0\) since
  \begin{equation}
    \label{eq:theta3-theta4-aux3}
    \Bt^{\transp} c \geq 0,
  \end{equation}
  which follows from \(\paren{\qform{X}{\sqrt{w}}}^{1/2} \Bt^{\transp}
  c = \MPinverse{\sqbrac[\big]{\Diag(\sqrt{d})}} B^{\transp} B
  \sqrt{w} = \MPinverse{\sqbrac[\big]{\Diag(\sqrt{d})}} X \sqrt{w}
  \geq 0\).  Finally,
  \begin{equation*}
    \begin{split}
      \diag(Y)
      & =
      \diag(\Bb^{\transp}\Bb)
      =
      \diag\paren[\Big]{
        \Diag(\Bt^{\transp} c)
        \Bt^{\transp}
        \Bt
        \Diag(\Bt^{\transp} c)
      }
      \\
      & =
      \paren{
        \Bt^{\transp} c
      }
      \hprod
      \diag\paren{
        \Bt^{\transp}
        \Bt
      }
      \hprod
      \paren{
        \Bt^{\transp} c
      }
      \\
      & =
      \paren{
        \Bt^{\transp} c
      }
      \hprod
      \diag\paren[\Big]{
        \MPinverse{\sqbrac[\big]{\Diag(\sqrt{d})}}
        B^{\transp}
        B
        \MPinverse{\sqbrac[\big]{\Diag(\sqrt{d})}}
      }
      \hprod
      \paren{
        \Bt^{\transp} c
      }
      \\
      & =
      \paren{
        \Bt^{\transp} c
      }
      \hprod
      \incidvector{\supp(d)}
      \hprod
      \paren{
        \Bt^{\transp} c
      }
      =
      \paren{
        \Bt^{\transp} c
      }
      \hprod
      \paren{
        \Bt^{\transp} c
      }
      =
      y,
    \end{split}
  \end{equation*}
  where we used for the second-to-last equation the fact that \(d_j =
  0\) implies that \((\Bt^{\transp} c)_j = e_j^{\transp} \Bt^{\transp}
  c = e_j^{\transp} \MPinverse{\sqbrac[\big]{\Diag(\sqrt{d})}}
  B^{\transp} c = 0^{\transp} B^{\transp} c = 0\).  Thus, \(\Yh \in
  \liftedGenTH{\AdjCone}{\Psdlift{\QuadCone}}\),
  and~\eqref{eq:theta3-theta4-aux2} is proved.

  We also have \(\Bt \Diag(\sqrt{d}) = B
  \MPinverse{\sqbrac[\big]{\Diag(\sqrt{d})}} \Diag(\sqrt{d}) = B
  \Diag(\incidvector{\supp(d)}) = B\) since \(d_i = 0\) implies \(B
  e_i = 0\).  Thus,
  \begin{equation*}
    \begin{split}
      \qform{X}{\sqrt{w}}
      & =
      \paren*{
        \frac{
          \sqrt{w}^{\transp} B^{\transp} B \sqrt{w}
        }{
          \paren{
            \qform{X}{\sqrt{w}}
          }^{1/2}
        }
      }^2
      =
      \paren*{
        \sqrt{w}^{\transp}
        B^{\transp} c
      }^2
      \\
      & =
      \paren*{
        \sqrt{w}^{\transp}
        \Diag(\sqrt{d})
        \Bt^{\transp}
        c
      }^2
      =
      \paren*{
        \sqrt{d}^{\transp}
        \Diag(\sqrt{w})
        \sqrt{y}
      }^2
      \\
      & \leq
      \norm[\big]{\sqrt{d}}^2
      \norm[\big]{\Diag(\sqrt{w}) \sqrt{y}}^2
      =
      \sqbrac*{\trace(X)}
      \iprod{w}{y}.
    \end{split}
  \end{equation*}
  This completes the proof of~\eqref{eq:theta3-theta4-aux1}.

  Let \(X\) be an optimal solution for the RHS
  of~\eqref{eq:theta3-theta4}; the latter set is compact since it is a closed subset of \(\setst{X \in \Psd{V}}{\trace(X) = 1}\).  By
  Lemma~\ref{lemma:gijswijt-lambda-trick}, we have \(X\sqrt{w} \geq
  0\).  Thus, \(\suppf{\genTH{\AdjCone}{\Psdlift{\QuadCone}}}{w} \geq
  \qform{X}{\sqrt{w}}\) by~\eqref{eq:theta3-theta4-aux1} and the
  proof of~`\(\geq\)' is complete.

  The latter paragraph showed that the RHS is attained.  Attainment of the LHS also follows by compactness by Corollary~\ref{cor:TH-AK-convex-corner}.
\end{proof}

\subsection{Schur Liftings of Cones}
\label{sec:schur-liftings}

In this subsection, we define a lifting operator that is, in a sense,
dual to the PSD lifting introduced in the previous subsection.  After
proving that the Schur lifting of certain convex cones are also
convex, we shall prove a Weak Duality result relating both liftings,
and then we shall describe \(\FRAC(G)\) as a theta body over the Schur
lifting of a simple cone.

Let \(\QuadCone \subseteq \Sym{V}\).  Define the \emph{Schur lifting
  of~\(\QuadCone\)} as
\begin{equation}
  \Schurlift{\QuadCone}
  \coloneqq
  \setst*{
    \begin{bmatrix}
      x_0 & x^{\transp}\thinspace \\
      x   & X                     \\
    \end{bmatrix}
    \in
    \Sym{\setlift{V}}
  }{
    X \in \QuadCone,\,
    x_0 \in \Reals_+,\,
    x_0 X
    \succeq_{\QuadCone}
    \oprodsym{x}
  }.
\end{equation}
Note that \(\Schurlift{\Psd{V}} = \Psd{\setlift{V}}\).  It is
instructive to rewrite the PSD lifting \(\Psdlift{\QuadCone}\) in the
following format similar to~\(\Schurlift{\QuadCone}\):
\begin{equation}
  \text{%
    if \(\QuadCone \subseteq \Psd{V}\), then
  }
  \Psdlift{\QuadCone}
  =
  \setst*{
    \begin{bmatrix}
      x_0 & x^{\transp}\thinspace \\
      x   & X                     \\
    \end{bmatrix}
    \in
    \Sym{\setlift{V}}
  }{
    X \in \QuadCone,\,
    x_0 \in \Reals_+,\,
    x_0 X
    \succeq
    \oprodsym{x}
  };
\end{equation}
note the difference in the last (conic) inequality.

Whereas the expression~\eqref{eq:psd-lift} makes it clear that the PSD
lifting of a closed convex cone is convex, the same can not be said
about the Schur lifting.  We shall now show that, under certain simple
conditions, the Schur lifting of a convex cone is also convex, and in
fact it satisfies the properties~\eqref{eq:TH-AK-hypotheses-K} of the cone~\(\liftedQuadCone\) in Proposition~\ref{prop:TH-AK-almost-convex-corner}:
\begin{theorem}
  \label{thm:schur-K}
  Let \(\QuadCone \subseteq \Sym{V}\) be a diagonally
  scaling-invariant closed convex cone such that \(\QuadCone \supseteq
  \Psd{V}\) and \(\diag(\QuadCone) \subseteq \Reals_+^V\).  Then
  \(\Schurlift{\QuadCone}\) is a diagonally scaling-invariant closed
  convex cone that satisfies~\eqref{eq:TH-AK-hypotheses-K}.  In
  particular, if \(\AdjCone \subseteq \Sym{V}\) is a diagonally
  scaling-invariant closed convex cone such
  that~\eqref{eq:A-big-enough} holds, then
  \(\cl\paren[\big]{\genTH{\AdjCone}{\Schurlift{\QuadCone}}}\) is a
  convex corner contained in~\([0,1]^V\).
\end{theorem}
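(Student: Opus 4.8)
The plan is to verify that $\Schurlift{\QuadCone}$ satisfies every hypothesis imposed on the cone~$\liftedQuadCone$ in Proposition~\ref{prop:TH-AK-almost-convex-corner} --- that it is a diagonally scaling-invariant closed convex cone fulfilling~\eqref{eq:TH-AK-hypotheses-K} --- after which the ``in particular'' clause is immediate by applying that proposition with $\liftedQuadCone \coloneqq \Schurlift{\QuadCone}$ (the containment in $[0,1]^V$ being automatic since $[0,1]^V$ is closed). Throughout I write a matrix of $\Sym{\setlift{V}}$ as the triple $(x_0,x,X)$ of its $(0,0)$-entry, its remaining $0$th column, and its principal submatrix on~$V$, so that $(x_0,x,X) \in \Schurlift{\QuadCone}$ means $X \in \QuadCone$, $x_0 \geq 0$, and $x_0 X - \oprodsym{x} \in \QuadCone$. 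The cone, closedness and diagonal scaling invariance properties are routine: scaling $(x_0,x,X)$ by $\lambda \geq 0$ turns $x_0 X - \oprodsym{x}$ into $\lambda^2(x_0 X - \oprodsym{x}) \in \QuadCone$; closedness follows since $\QuadCone$ is closed and the three defining conditions pass to limits; and writing a nonnegative scaling vector on~$\setlift{V}$ as $h_0 \oplus g$, the map $\DiagScaleMap{h_0 \oplus g}$ sends $(x_0,x,X)$ to $(h_0^2 x_0,\ h_0(g \hprod x),\ \DiagScaleMap{g}(X))$, and since $\oprodsym{(g\hprod x)} = \DiagScaleMap{g}(\oprodsym{x})$ and $\DiagScaleMap{g}$ is linear the conic inequality becomes $h_0^2\,\DiagScaleMap{g}(x_0 X - \oprodsym{x}) \in \QuadCone$, true because $\QuadCone$ is diagonally scaling-invariant.

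The crux is convexity, and since $\Schurlift{\QuadCone}$ is a cone it suffices to prove closure under addition. I would first record that $\diag(\QuadCone) \subseteq \Reals_+^V$ forces $-\oprodsym{x} \in \QuadCone$ to imply $x = 0$ (take diagonals: $-(x \hprod x) \in \Reals_+^V$). Hence any element of $\Schurlift{\QuadCone}$ with vanishing $(0,0)$-entry is of the form $(0,0,X)$ with $X \in \QuadCone$, and adding it to an arbitrary $(y_0,y,Y) \in \Schurlift{\QuadCone}$ produces $(y_0,y,X+Y)$, which lies in $\Schurlift{\QuadCone}$ because $y_0(X+Y) - \oprodsym{y} = (y_0 Y - \oprodsym{y}) + y_0 X \in \QuadCone$. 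When both summands have positive $(0,0)$-entry, the key is the identity
\begin{equation*}
  (x_0 + y_0)(X + Y) - \oprodsym{(x+y)}
  =
  \tfrac{x_0+y_0}{x_0}\paren*{x_0 X - \oprodsym{x}}
  +
  \tfrac{x_0+y_0}{y_0}\paren*{y_0 Y - \oprodsym{y}}
  +
  \oprodsym{\paren*{\sqrt{\tfrac{y_0}{x_0}}\,x - \sqrt{\tfrac{x_0}{y_0}}\,y}},
\end{equation*}
which one checks by expansion (the rank-one term supplies $\tfrac{y_0}{x_0}\oprodsym{x} + \tfrac{x_0}{y_0}\oprodsym{y} - xy^{\transp} - yx^{\transp}$, whose first two pieces cancel the surplus $\oprodsym{x}$- and $\oprodsym{y}$-terms of the first two summands). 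Each summand on the right lies in $\QuadCone$: the first two by the cone property applied to $x_0 X - \oprodsym{x},\, y_0 Y - \oprodsym{y} \in \QuadCone$ with nonnegative coefficients, and the third because it is positive semidefinite and $\Psd{V} \subseteq \QuadCone$. Therefore the left side lies in $\QuadCone$, giving $(x_0 + y_0,\,x+y,\,X+Y) \in \Schurlift{\QuadCone}$ and closure under addition.

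It remains to check~\eqref{eq:TH-AK-hypotheses-K}. For~\eqref{eq:lifted-K-big-enough}, $\oprodsym{(e_0+e_i)}$ corresponds to $(1,e_i,\oprodsym{e_i})$, and $1\cdot\oprodsym{e_i} - \oprodsym{e_i} = 0 \in \QuadCone$ with $\oprodsym{e_i} \in \Psd{V} \subseteq \QuadCone$ shows it belongs to $\Schurlift{\QuadCone}$. For~\eqref{eq:lifted-K-not-too-big}, let $(1,x,X) \in \Schurlift{\QuadCone}$ satisfy $\Xh e_0 = \diag(\Xh)$; this forces $x = \diag(X)$, and then $X - \oprodsym{x} \in \QuadCone$ yields, on taking diagonals, $x - x\hprod x \in \Reals_+^V$, i.e.\ $x_i(1-x_i) \geq 0$ for every $i \in V$. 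Combined with $x = \diag(X) \in \diag(\QuadCone) \subseteq \Reals_+^V$, this gives $x \in [0,1]^V$, so $\diag(\Xh) = 1 \oplus x \in [0,1]^{\setlift{V}}$. Proposition~\ref{prop:TH-AK-almost-convex-corner} now delivers the conclusion. The sole non-routine ingredient is the decomposition identity in the convexity step; everything else is bookkeeping with the two structural hypotheses $\Psd{V} \subseteq \QuadCone$ and $\diag(\QuadCone) \subseteq \Reals_+^V$.
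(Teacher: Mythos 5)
Your proof is correct, and it reaches the key step --- convexity of \(\Schurlift{\QuadCone}\) --- by a genuinely different route than the paper. The paper first writes \(\Schurlift{\QuadCone}\) as the closure of its slice with \(x_0 > 0\) (using \(\diag(\QuadCone) \subseteq \Reals_+^V\)), then proves convexity of that slice by showing that for each \(H \in \dual{\QuadCone}\) the perspective-type function \(x_0 \oplus x \mapsto \qform{H}{x}/x_0\) is convex on \(\Reals_{++} \oplus \Reals^V\), via an explicit Hessian computation; the hypothesis \(\QuadCone \supseteq \Psd{V}\) enters as \(\dual{\QuadCone} \subseteq \Psd{V}\), which lets one decompose \(H\) into rank-one terms and certify \(\nabla^2 f_H \succeq 0\). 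You instead prove closure under addition directly: the boundary case \(x_0 = 0\) is dispatched by observing that \(-\oprodsym{x} \in \QuadCone\) forces \(x = 0\) (again via \(\diag(\QuadCone) \subseteq \Reals_+^V\)), and the interior case rests on the identity
\begin{equation*}
  (x_0 + y_0)(X + Y) - \oprodsym{(x+y)}
  =
  \tfrac{x_0+y_0}{x_0}\paren[\big]{x_0 X - \oprodsym{x}}
  +
  \tfrac{x_0+y_0}{y_0}\paren[\big]{y_0 Y - \oprodsym{y}}
  +
  \oprodsym{\paren[\big]{\sqrt{\tfrac{y_0}{x_0}}\,x - \sqrt{\tfrac{x_0}{y_0}}\,y}},
\end{equation*}
which I have verified by expansion; here \(\QuadCone \supseteq \Psd{V}\) is used directly to absorb the rank-one remainder. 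Your argument is more elementary (purely algebraic, no differentiability or closure-of-a-dense-slice step) and slightly more self-contained, since handling \(x_0 = 0\) explicitly removes the need for the paper's identity \(\Schurlift{\QuadCone} = \cl(\widehat{\mathbb{M}})\). What the paper's route buys is the conceptual link to the joint convexity of the matrix-fractional/perspective function, which it exploits in its remarks connecting Schur liftings to Siegel domains and to recursive quadratic reformulations. The remaining verifications (cone, closedness, diagonal scaling invariance, \eqref{eq:lifted-K-big-enough}, \eqref{eq:lifted-K-not-too-big}, and the appeal to Proposition~\ref{prop:TH-AK-almost-convex-corner}) match the paper's and are all sound.
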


\begin{proof}
  \newcommand*{\AuxCone}{\widehat{\mathbb{M}}}
  Closedness of \(\Schurlift{\QuadCone}\) follows from that
  of~\(\QuadCone\).  Using \(\diag(\QuadCone) \subseteq \Reals_+^V\),
  it is not hard to check that
  \begin{equation}
    \label{eq:schur-K-aux1}
    \Schurlift{\QuadCone}
    =
    \cl(\AuxCone\thinspace),
  \end{equation}
  where
  \begin{equation*}
    \AuxCone
    \coloneqq
    \setst*{
      \begin{bmatrix}
        x_0 & x^{\transp}\thinspace \\
        x   & X                     \\
      \end{bmatrix}
      \in
      \Sym{\setlift{V}}
    }{
      X \in \QuadCone,\,
      x_0 \in \Reals_{++},\,
      x_0 X
      \succeq_{\QuadCone}
      \oprodsym{x}
    }.
  \end{equation*}

  It is obvious that \(\Schurlift{\QuadCone}\) is a cone.  We shall
  prove that~\(\Schurlift{\QuadCone}\) is convex by showing that
  \begin{equation}
    \label{eq:schur-K-aux2}
    \AuxCone \text{ is convex}.
  \end{equation}
  Since
  \begin{equation*}
    \AuxCone
    =
    \setst*{
      \begin{bmatrix}
        x_0 & x^{\transp}\thinspace \\
        x   & X                     \\
      \end{bmatrix}
      \in
      \Sym{\setlift{V}}
    }{
      X \in \QuadCone,\,
      x_0 \in \Reals_{++},\,
      \iprod{H}{x_0 X - \oprodsym{x}} \geq 0\,
      \forall H \in \dual{\QuadCone}
    },
  \end{equation*}
  it suffices to show that, for each \(H \in \dual{\QuadCone}\),
  \begin{equation}
    \label{eq:schur-K-aux3}
    \text{the function }
    f_H
    \ffrom
    x_0 \oplus x
    \in
    \Reals_{++}
    \oplus
    \Reals^V
    \mapsto
    \frac{
      \qform{H}{x}
    }{
      x_0
    }
    \text{ is convex}.
  \end{equation}
  Let \(H \in \dual{\QuadCone}\).  The Hessian of~\(f_H\) is
  \begin{equation*}
    \nabla^2 f_H (x_0 \oplus x)
    =
    \frac{2}{x_0^2}
    \begin{bmatrix}
      \qform{H}{x}/x_0 & - (H x)^{\transp}\thinspace \\
      - H x            & x_0 H                       \\
    \end{bmatrix}.
  \end{equation*}
  \bgroup
  \newcommand*{\eigenset}{\Hscr}%
  From the hypothesis that \(\QuadCone \supseteq \Psd{V}\) we get
  \(\dual{\QuadCone} \subseteq \Psd{V}\) whence \(H \succeq 0\), so we
  may write \(H = \sum_{h \in \eigenset} \oprodsym{h}\) for a finite
  subset \(\eigenset\) of~\(\Reals^V\).  For \(u \coloneqq x_0^{1/2}
  \oplus x_0^{-1/2} \ones \in \Reals_{++}^{\set{0}} \oplus
  \Reals_{++}^V\), we have
  \begin{equation*}
    \frac{x_0^2}{2}
    \DiagScaleMap{u}\paren*{
      \nabla^2 f_H (x_0 \oplus x)
    }
    =
    \sum_{h \in \eigenset}
    \DiagScaleMap{\iprod{h}{x} \oplus \ones}
    \paren*{
      \begin{bmatrix}
        1  & - h^{\transp}\thinspace \\
        -h & \oprodsym{h}            \\
      \end{bmatrix}
    }
    \succeq 0.
  \end{equation*}
  Thus, \(\nabla^2 f_H(x_0 \oplus x) \succeq 0\), and
  this concludes the proofs of~\eqref{eq:schur-K-aux3} and~\eqref{eq:schur-K-aux2}.  Therefore,
  \(\Schurlift{\QuadCone}\) is convex by~\eqref{eq:schur-K-aux1}.
  \egroup

  Let
  \begin{equation*}
    \Xh
    \coloneqq
    \begin{bmatrix}
      x_0 & x^{\transp}\thinspace \\
      x   & X                     \\
    \end{bmatrix}
    \in \Schurlift{\QuadCone},
  \end{equation*}
  and let \(h_0 \oplus h \in \Reals_+^{\set{0}} \oplus \Reals_+^{V}\).
  The condition \(\DiagScaleMap{h_0 \oplus h}(\Xh) \in
  \Schurlift{\QuadCone}\) is equivalent to \(\DiagScaleMap{h}(X) \in
  \QuadCone\) and \(h_0^2 x_0 \DiagScaleMap{h}(X) \succeq_{\QuadCone}
  h_0^2 \DiagScaleMap{h}(\oprodsym{x})\), both of which follow from
  the diagonal scaling invariance of~\(\QuadCone\).  It is easy to
  check that \(\Schurlift{\QuadCone}\)
  satisfies~\eqref{eq:lifted-K-big-enough}.
  For~\eqref{eq:lifted-K-not-too-big}, note that \(x - (x \hprod x) =
  \diag(X - \oprodsym{x}) \geq 0\) since \(\diag(\QuadCone) \subseteq
  \Reals_+^V\) whence \(x \subseteq [0,1]^V\).  This completes the
  proof that~\eqref{eq:TH-AK-hypotheses-K} holds.  The remainder of the
  statement of the theorem follows from
  Proposition~\ref{prop:TH-AK-almost-convex-corner}.
\end{proof}

The hypothesis that \(\QuadCone \supseteq \Psd{V}\) holds cannot be
dropped from Theorem~\ref{thm:schur-K}.  Consider the
cone~\(\ComplPositive{V}\) of completely positive matrices.  Now take
\(V \coloneqq \set{1,\dotsc,n}\) for some \(n \geq 2\) and note that both
\(\oprodsym{\incidvector{\set{0,1}}} +
\oprodsym{\incidvector{\set{2}}}\) and
\(\oprodsym{\incidvector{\set{0,2}}} +
\oprodsym{\incidvector{\set{1}}}\) lie in
\(\Schurlift{\ComplPositive{V}}\), whereas their midpoint does not.

Similar constructions of higher dimensional cones from lower
dimensional cones exist in many other areas of mathematics and
mathematical sciences.  In addition to the obvious Schur complement
connection, there are at least two other instances: one in the
construction of \emph{Siegel Domains} (see
\cite{Guler1996,TruongTuncel2004} and the references therein to
start), another in convex optimization and analysis in certain
recursive quadratic reformulation of optimization problems (see
\cite[pp.~165--168]{NN1994}).  In both of these constructions (which
guarantee the convexity of the resulting cone), \(\oprodsym{x}\)
corresponds to the bilinear form.  A key condition on the bilinear
forms in both of these constructions, corresponds to the condition
\(\oprodsym{x} \in \QuadCone\) in our current context. Indeed, this
last condition is equivalent to \(\QuadCone \supseteq \Psd{V}\).

\vspace{3pt}

PSD and Schur liftings of cones are in a sense dual to each other.  In
the next result, we make this statement a bit clearer by showing a
containment relation between theta bodies defined using these two
liftings.  The relation may be regarded as a form of Weak Duality, and
we shall later prove that equality, and hence a form of Strong
Duality, holds.
\begin{proposition}
  \label{prop:psd-schur-duality}
  Let \(\AdjCone \subseteq \Sym{V}\) be a diagonally scaling-invariant
  polyhedral cone such that \(\Image(\Diag) \subseteq \AdjCone\).  Let
  \(\QuadCone \subseteq \Psd{V}\) be a diagonally scaling-invariant
  closed convex cone such that \(\Diag(\Reals_+^V) \subseteq
  \QuadCone\).  Then
  \begin{equation}
    \label{eq:psd-schur-duality}
    \genTH{
      \AdjCone
    }{
      \Psdlift{\QuadCone}
    }
    \subseteq
    \abl[\big]{
      \cl\paren[\big]{
        \genTH[\big]{
          \thinspace\AdjDual{\AdjCone}
        }{
          \Schurlift{\dual{\QuadCone}}
        }
      }
    }.
  \end{equation}
\end{proposition}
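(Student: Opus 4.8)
The plan is to prove the containment directly from the definitions of the antiblocker and of the two liftings, exploiting the Schur-complement relationship that underlies both. By definition of the antiblocker, it suffices to show that for every $x \in \genTH{\AdjCone}{\Psdlift{\QuadCone}}$ and every $y \in \cl\paren[\big]{\genTH{\thinspace\AdjDual{\AdjCone}}{\Schurlift{\dual{\QuadCone}}}}$ we have $\iprod{x}{y} \le 1$; since both theta bodies are contained in $\Reals_+^V$ by Proposition~\ref{prop:TH-AK-almost-convex-corner} (using that $\Psdlift{\QuadCone} \subseteq \Psd{\setlift{V}}$ and, for the second, Theorem~\ref{thm:schur-K} applied to $\dual{\QuadCone} \supseteq \Psd{V}$), it then suffices to verify the inequality on the dense subset $\genTH{\thinspace\AdjDual{\AdjCone}}{\Schurlift{\dual{\QuadCone}}}$ and pass to the closure by continuity of the inner product. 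So fix witnesses $\Xh \in \liftedGenTH{\AdjCone}{\Psdlift{\QuadCone}}$ with $x = \diag(\Xh[V])$, and $\Yh \in \liftedGenTH{\thinspace\AdjDual{\AdjCone}}{\Schurlift{\dual{\QuadCone}}}$ with $y = \diag(\Yh[V])$.

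The core computation is to expand $\iprod{\Xh}{\Yh}$ in block form and read off $\iprod{x}{y}$ together with a nonnegative remainder. Write $\Xh = \onelift{x}{X}$ and $\Yh = \onelift{y}{Y}$, where the normalization $\Xh_{00} = \Yh_{00} = 1$ and the constraints $\Xh e_0 = \diag(\Xh)$, $\Yh e_0 = \diag(\Yh)$ force the $0$th columns to be $x$ and $y$ respectively, and $x = \diag(X)$, $y = \diag(Y)$. Then
\begin{equation*}
  \iprod{\Xh}{\Yh}
  = 1 + 2\iprod{x}{y} + \iprod{X}{Y}.
\end{equation*}
Since $\Xh \in \Psdlift{\QuadCone} \subseteq \Psd{\setlift{V}}$ and $\Yh \in \Schurlift{\dual{\QuadCone}} \subseteq \Psd{\setlift{V}}$ (the latter because $\dual{\QuadCone} \supseteq \Psd{V}$, so Schur complement gives $\Yh_{00} Y \succeq \oprodsym{y}$ with $\Yh_{00} = 1$), we have $\iprod{\Xh}{\Yh} \ge 0$; but that only gives a lower bound. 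Instead I will use two sharper facts: first, $X \in \AdjCone$ and $Y \in \AdjDual{\AdjCone} = \Image(\Diag) - \dual{\AdjCone\!}$, so writing $Y = \Diag(v) - W$ with $W \in \dual{\AdjCone\!}$ gives $\iprod{X}{Y} = \iprod{\diag(X)}{v} - \iprod{X}{W} = \iprod{x}{v} - \iprod{X}{W}$ with $\iprod{X}{W} \ge 0$; second, $X \in \QuadCone$ while the Schur-complement constraint on $\Yh$ reads $Y \succeq_{\dual{\QuadCone}} \oprodsym{y}$, i.e.\ $Y - \oprodsym{y} \in (\dual{\QuadCone})^* \cap$-type condition, so that $\iprod{X}{Y - \oprodsym{y}} \ge 0$, i.e.\ $\iprod{X}{Y} \ge \qform{Y-\oprodsym{y}}{\cdot}$... let me instead combine these cleanly: from $X \in \QuadCone$ and $Y - \oprodsym{y} \succeq_{\dual{\QuadCone}} 0$ we get $\iprod{X}{Y} \ge \qform{X}{y}$ reading $\oprodsym{y}$ against $X$, hence $\iprod{X}{Y} \ge \qform{X}{y} \ge 0$. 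The right identity to chase is: $\iprod{x}{y} = \iprod{\diag(X)}{y}$ and, separately, $\qform{X}{y} = \iprod{\oprodsym{y}}{X} = \iprod{y}{Xy}$; I will show $\iprod{x}{y} \le 1$ by bounding $\iprod{x}{y}$ via the diagonal entries using the $[0,1]^V$ containment of $y$ together with a clique-type inequality extracted from $X \in \AdjCone$ and the PSD structure.

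I expect the main obstacle to be organizing the bound so that the adjacency-duality pairing $\AdjCone \leftrightarrow \AdjDual{\AdjCone}$ and the lifting-duality pairing $\QuadCone \leftrightarrow \dual{\QuadCone}$ interact correctly: specifically, showing that the ``cross term'' $\iprod{X}{Y}$ is controlled by $-2\iprod{x}{y} + 1$ plus nonnegative junk requires combining $\iprod{X}{W}\ge 0$ (from $W \in \dual{\AdjCone}$, $X \in \AdjCone$) with $\iprod{X}{\oprodsym{y}} \ge 0$ and with the Schur slack of $\Xh$, namely $X \succeq \oprodsym{x}$ (Schur complement of $\Xh$, valid since $\QuadCone \subseteq \Psd{V}$). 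Putting $Y = \Diag(y) - W$ — here $v = y$ since $\diag(Y) = y$ — and $X = \oprodsym{x} + R$ with $R \succeq 0$, one computes $\iprod{\Xh}{\Yh} = 1 + 2\iprod{x}{y} + \iprod{x}{y} - \iprod{\oprodsym{x} + R}{W} = 1 + 3\iprod{x}{y} - \iprod{X}{W}$; simultaneously $0 \le \iprod{\Xh}{\Yh}$ gives nothing useful, so the real inequality must come from testing $\Xh$ against a well-chosen element, or from the constraint $Y \succeq_{\dual{\QuadCone}} \oprodsym{y}$ paired with $X\in\QuadCone$ giving $\iprod{X}{Y}\ge \qform{X}{y}$, combined with $X \in\AdjCone$, $Y\in\AdjDual{\AdjCone}$ giving $\iprod{X}{Y}\le \iprod{x}{y}$; these two together yield $\qform{X}{y}\le\iprod{x}{y}$, and then $\iprod{x}{y}^2 = \iprod{\oprodsym{x}}{\oprodsym{y}} \le \qform{X}{y} \le \iprod{x}{y}$ using $X\succeq\oprodsym{x}$, hence $\iprod{x}{y}\le 1$. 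I anticipate that making the step $\iprod{X}{Y}\le\iprod{x}{y}$ fully rigorous — it uses $Y = \Diag(y) - W$, so $\iprod{X}{Y} = \iprod{x}{y} - \iprod{X}{W}$ with $\iprod{X}{W}\ge 0$ — and checking that $\dual{\QuadCone}\supseteq\Psd{V}$ is exactly what legitimizes the Schur-complement reading of $\Schurlift{\dual{\QuadCone}}$, are the two points needing care; everything else is bookkeeping with the block decomposition and the definition of $\AdjDual{(\cdot)}$.
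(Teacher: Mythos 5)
Your final chain of inequalities --- $\iprod{x}{y}^2 = \qform{(\oprodsym{x})}{y} \leq \qform{X}{y} \leq \iprod{X}{Y} = \iprod{x}{y} - \iprod{X}{W} \leq \iprod{x}{y}$, using $X \succeq \oprodsym{x}$ from the Schur complement of $\Xh$, the pairing of $X \in \QuadCone$ with $Y - \oprodsym{y} \in \dual{\QuadCone}$, and the decomposition $Y = \Diag(y) - W$ with $W \in \dual{\AdjCone\!} \subseteq \Null(\diag)$ --- is exactly the paper's proof, modulo the dead-end detours through $\iprod{\Xh}{\Yh}$ that you correctly abandon along the way. The argument is correct and takes essentially the same route as the paper.
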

\begin{proof}
  By continuity, it suffices to show that \(\iprod{x}{y} \leq 1\) if
  \(x \in \genTH{\AdjCone}{\Psdlift{\QuadCone}}\) and
  \(
  y \in
  \genTH[\big]{
    \thinspace\AdjDual{\AdjCone}
  }{
    \Schurlift{\dual{\QuadCone}}
  }
  \).
  Let \(x
  \in \genTH{\AdjCone}{\Psdlift{\QuadCone}}\), and let \(\Xh \in
  \liftedGenTH{\AdjCone}{\Psdlift{\QuadCone}}\) such that \(x =
  \diag(X)\) for \(X \coloneqq \Xh[V]\).  Let
  \(y \in
  \genTH[\big]{
    \thinspace\AdjDual{\AdjCone}
  }{
    \Schurlift{\dual{\QuadCone}}
  }
  \),
  and let
  \(
  \Yh \in
  \liftedGenTH[\big]{
    \thinspace\AdjDual{\AdjCone}
  }{
    \Schurlift{\dual{\QuadCone}}
  }
  \)
  such that \(y = \diag(Y)\) for \(Y \coloneqq \Yh[V]\).  Write \(Y =
  \Diag(u) - B\) where \(B \in \dual{\AdjCone\!}\).  Since
  \(\dual{\AdjCone\!} \subseteq \Null(\diag)\), we have \(u = y\).
  Then
  \begin{equation*}
    \begin{split}
      0
      & \leq
      \iprod{X}{Y - \oprodsym{y}}
      =
      \iprod{X}{\Diag(u) - B}
      -
      \qform{X}{y}
      =
      \iprod{x}{y}
      -
      \iprod{X}{B}
      -
      \qform{X}{y}
      \\
      & \leq
      \iprod{x}{y}
      -
      \qform{(\oprodsym{x})}{y}
      =
      \iprod{x}{y}
      -
      \iprod{x}{y}^2.
    \end{split}
  \end{equation*}
  Hence, \(\iprod{x}{y} \leq 1\).
\end{proof}

We can now give an example of a natural theta body defined over the
Schur lifting of a cone.  Define the \emph{weak fractional stable set
  polytope} of a graph \(G = (V,E)\) as the polytope
\begin{equation}
  \label{eq:FRAC}
  \FRAC(G)
  \coloneqq
  \setst{
    x \in [0,1]^V
  }{
    x_i + x_j \leq 1\,\forall ij \in E
  }.
\end{equation}
We shall prove that~\(\FRAC(G)\) is a theta body over the Schur
lifting of the convex cone
\begin{equation}
  \QuadConeTwo{V}
  \coloneqq
  \setst*{
    X \in \Sym{V}
  }{
    X[e] \succeq 0\,\forall e \in \tbinom{V}{2}
  };
\end{equation}
note the similarity with the hypothesis of
Corollary~\ref{cor:TH-AK-convex-corner}.  We shall make essential use of
Theorem~\ref{thm:schur-K} in our proof.

\begin{theorem}
  \label{prop:FRAC-as-TH}
  Let \(G = (V,E)\) be a graph such that \(\card{V} \geq 2\).  Then
  \begin{equation}
    \FRAC(G)
    =
    \genTH*{
      \PolyAdjCone{E}{E}
    }{
      \Schurlift*{\QuadConeTwo{V}}
    }.
  \end{equation}
\end{theorem}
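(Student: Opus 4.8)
The plan is to prove the two inclusions directly, after recording that Theorem~\ref{thm:schur-K} applies. First I would check that \(\QuadConeTwo{V}\) is a diagonally scaling-invariant closed convex cone with \(\QuadConeTwo{V} \supseteq \Psd{V}\) and \(\diag(\QuadConeTwo{V}) \subseteq \Reals_+^V\): it is an intersection of finitely many preimages of \(\Psd{\set{i,j}}\) under principal-submatrix maps, hence a closed convex cone; it is diagonally scaling-invariant because \(\DiagScaleMap{h}\) acts on each \(2\times 2\) principal submatrix by a congruence with a nonnegative diagonal matrix; it contains \(\Psd{V}\) since principal submatrices of a PSD matrix are PSD; and, using \(\card{V} \geq 2\), every diagonal entry of a member appears as the \((1,1)\)-entry of some PSD \(2\times 2\) submatrix, hence is nonnegative. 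By Theorem~\ref{thm:schur-K}, \(\Schurlift{\QuadConeTwo{V}}\) is then a diagonally scaling-invariant closed convex cone satisfying~\eqref{eq:TH-AK-hypotheses-K}; since \(\Image(\Diag) \subseteq \PolyAdjCone{E}{E}\) as well, the right-hand side is already known to be a convex corner contained in~\([0,1]^V\).

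For the inclusion ``\(\subseteq\)'', I would pick \(x\) in the right-hand side together with a witness \(\Xh = \onelift{x}{X} \in \Schurlift{\QuadConeTwo{V}}\), where \(x = \diag(X)\) and \(X \in \PolyAdjCone{E}{E}\), so that \(X_{ij} = 0\) for every \(ij \in E\). Because \(\Xh_{00} = 1\), the defining inequality of the Schur lifting amounts to \(X - \oprodsym{x} \in \QuadConeTwo{V}\). Reading off the diagonal gives \(x_i - x_i^2 \geq 0\), hence \(x \in [0,1]^V\) (this also follows from~\eqref{eq:lifted-K-not-too-big}). For \(ij \in E\), the \(\set{i,j}\)-principal submatrix of \(X - \oprodsym{x}\) has diagonal entries \(x_i - x_i^2\) and \(x_j - x_j^2\) and off-diagonal entry \(-x_i x_j\); positive semidefiniteness of this \(2\times 2\) block forces its determinant, which simplifies to \(x_i x_j(1 - x_i - x_j)\), to be nonnegative, and combined with \(x \in [0,1]^V\) this yields \(x_i + x_j \leq 1\). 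Hence \(x \in \FRAC(G)\).

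For the reverse inclusion I would exhibit an explicit witness. Given \(x \in \FRAC(G)\), define \(X \in \Sym{V}\) by \(X_{ii} \coloneqq x_i\), \(X_{ij} \coloneqq 0\) for \(ij \in E\), and \(X_{ij} \coloneqq x_i x_j\) for \(ij \in \tbinom{V}{2} \drop E\), and set \(\Xh \coloneqq \onelift{x}{X}\). Then \(\Xh_{00} = 1\), \(\Xh e_0 = \diag(\Xh)\), and \(\Xh[V] = X \in \PolyAdjCone{E}{E}\), so it remains to verify \(\Xh \in \Schurlift{\QuadConeTwo{V}}\), i.e.\ \(X \in \QuadConeTwo{V}\) and \(X - \oprodsym{x} \in \QuadConeTwo{V}\). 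Both reduce to nonnegativity of \(2\times 2\) determinants and of diagonal entries: on a non-edge \(\set{i,j}\) the relevant block of \(X\) has determinant \(x_i x_j(1 - x_i x_j) \geq 0\) (using \(x \in [0,1]^V\)), while the corresponding block of \(X - \oprodsym{x}\) is diagonal with entries \(x_i(1-x_i), x_j(1-x_j) \geq 0\); on an edge \(\set{i,j}\) the block of \(X\) is diagonal with entries \(x_i, x_j \geq 0\), while the block of \(X - \oprodsym{x}\) has determinant \(x_i x_j(1 - x_i - x_j) \geq 0\) by the edge inequality of \(\FRAC(G)\). Thus \(\Xh \in \liftedGenTH{\PolyAdjCone{E}{E}}{\Schurlift{\QuadConeTwo{V}}}\) and \(x = \diag(X)\) lies in the right-hand side.

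The computations are entirely routine; the one point requiring a small idea is the choice of the off-diagonal entries of \(X\) on non-edges as \(x_i x_j\), which makes \(X - \oprodsym{x}\) block-diagonal away from \(E\) and leaves, on each edge, the single nontrivial \(2\times 2\) determinant equal to \(x_i x_j(1 - x_i - x_j)\) --- precisely the quantity controlled by the defining inequality of \(\FRAC(G)\). I do not foresee any genuine obstacle: Theorem~\ref{thm:schur-K} is used only to certify that \(\Schurlift{\QuadConeTwo{V}}\) is a legitimate parameter cone and that the theta body it produces is a convex corner in \([0,1]^V\), while the set equality itself (and hence closedness, since \(\FRAC(G)\) is a polytope) follows from the two inclusions.
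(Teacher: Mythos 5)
Your proof is correct, and the reverse inclusion \(\FRAC(G) \subseteq \genTH{\PolyAdjCone{E}{E}}{\Schurlift{\QuadConeTwo{V}}}\) is handled by a genuinely different and more self-contained route than the paper's. The paper first reduces to extreme points of \(\FRAC(G)\) via convexity of the theta body (Theorem~\ref{thm:schur-K}), invokes the half-integrality of those extreme points from \cite[Theorem~64.7]{Schrijver03b}, and then verifies a witness with \(X_{ij} = \Iverson{ij \in \overline{E}}\Iverson{x_i+x_j>1}x_ix_j\) by a case analysis over the values \(0,\thalf,1\). You instead exhibit a uniform witness \(X_{ij} = \Iverson{ij \in \overline{E}}\,x_ix_j\) for \emph{every} \(x \in \FRAC(G)\), and the required \(2\times 2\) determinant computations (\(x_ix_j(1-x_ix_j)\) on non-edges, \(x_ix_j(1-x_i-x_j)\) on edges) close immediately; this dispenses with both the external half-integrality theorem and the convexity reduction, at no cost. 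Your forward direction is a minor variant of the paper's (a direct \(2\times 2\) determinant in place of the paper's \(3\times3\) PSD argument paired with \(\iprod{\cdot}{\oprodsym{\ones}}\)), and you are also more careful than the paper in noting exactly where \(\card{V}\geq 2\) enters, namely in verifying \(\diag(\QuadConeTwo{V}) \subseteq \Reals_+^V\) so that Theorem~\ref{thm:schur-K} applies.
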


\begin{proof}
  We first prove~`\(\supseteq\)'.  Let \(x \in
  \genTH*{\PolyAdjCone{E}{E}}{\Schurlift*{\QuadConeTwo{V}}}\), and let
  \(
  \Xh
  \in
  \liftedGenTH*{
    \PolyAdjCone{E}{E}
  }{
    \Schurlift*{\QuadConeTwo{V}}
  }
  \)
  such that \(x = \diag(X)\) for \(X \coloneqq \Xh[V]\).  By
  Theorem~\ref{thm:schur-K}, we have \(x \in [0,1]^V\).  Let \(e = ij
  \in E\).  Set \(Y \coloneqq X[e]\) and \(y \coloneqq
  x\restriction_e\).  Then \(X \succeq_{\QuadConeTwo{V}}
  \oprodsym{x}\) implies \(Y \succeq \oprodsym{y}\) so
  \begin{equation*}
    \begin{bmatrix}
      1   & x_i & x_j \\
      x_i & x_i & 0   \\
      x_j & 0   & x_j \\
    \end{bmatrix}
    \succeq
    0
    \implies
    \begin{bmatrix}
      1    & -x_i & -x_j \\
      -x_i & x_i  & 0    \\
      -x_j & 0    & x_j  \\
    \end{bmatrix}
    \succeq
    0
    \implies
    1 - x_i - x_j
    =
    \iprod*{
      \begin{bmatrix}
        1    & -x_i & -x_j \\
        -x_i & x_i  & 0    \\
        -x_j & 0    & x_j  \\
      \end{bmatrix}
    }{
      \oprodsym{\ones}
    }
    \geq 0.
  \end{equation*}
  Thus \(x \in \FRAC(G)\), and~`\(\supseteq\)' is proved.

  For the reverse inclusion, it suffices by Theorem~\ref{thm:schur-K}
  to show that
  \(\genTH*{\PolyAdjCone{E}{E}}{\Schurlift*{\QuadConeTwo{V}}}\)
  contains all the extreme points of~\(\FRAC(G)\).  So let~\(x\) be an
  extreme point of~\(\FRAC(G)\).
  By~\cite[Theorem~64.7]{Schrijver03b}, all coordinates of~\(x\) lie
  in \(\set{0,\thalf,1}\).  Define
  \begin{equation*}
    \Xh
    \coloneqq
    \onelift{x}{X}
    \in
    \Sym{\setlift{V}}
  \end{equation*}
  by setting \(\diag(X) \coloneqq x\) and \(X_{ij} \coloneqq
  \Iverson[\big]{ij \in \overline{E}\thinspace} \Iverson[\big]{x_i +
    x_j > 1} x_i x_j\) for every \(ij \in \tbinom{V}{2}\).  Note that
  \(X \in \PolyAdjCone{E}{E} \cap \QuadConeTwo{V}\) holds, and that
  \(X \succeq_{\QuadConeTwo{V}} \oprodsym{x}\) is equivalent to
  \begin{equation*}
    Y^{ij}
    \coloneqq
    \begin{bmatrix}
      1   & x_i    & x_j    \\
      x_i & x_i    & X_{ij} \\
      x_j & X_{ij} & x_j    \\
    \end{bmatrix}
    \in
    \Psd{\setlift{\set{i,j}}},
  \end{equation*}
  for every \(ij \in \tbinom{V}{2}\).  So let \(ij \in \tbinom{V}{2}\).
  If \(x_i + x_j \leq 1\), then \(X_{ij} = 0\) and either \(0 \in
  \set{x_i,x_j}\) or \(x_i = x_j = \thalf\), so \(Y^{ij} \succeq 0\)
  is easily verified.  So assume \(x_i + x_j > 1\).  Then \(ij \in
  \overline{E}\), so \(X_{ij} = x_i x_j\).  If \(x_i = x_j = 1\), then
  \(Y^{ij} = \oprodsym{\ones} \succeq 0\).  If \(x_i = 1\) and \(x_j =
  \thalf\), then
  \begin{equation*}
    Y^{ij}
    =
    \begin{bmatrix}
      1      & 1      & \thalf \\[3pt]
      1      & 1      & \thalf \\[3pt]
      \thalf & \thalf & \thalf \\
    \end{bmatrix}
    =
    \DiagScaleMap{\ones-e_2/2}
    \paren*{
      \oprodsym{\ones}
      +
      \oprodsym{e_2}
    }
    \succeq 0.
  \end{equation*}
  Thus, \(\Xh \in \Schurlift*{\QuadConeTwo{V}}\) and the proof
  of~`\(\subseteq\)' is complete.
\end{proof}

\section{Reformulations of Antiblocking Duality}
\label{sec:abl-reformulation}

We saw in Section~\ref{sec:psd-liftings} that the support functions of
some theta bodies defined over the PSD lifting of a cone \(\QuadCone
\subseteq \Sym{V}\) may be expressed as a conic optimization problem
over~\(\QuadCone\).  In this section, we shall see that something
similar holds for Schur liftings.  This shall be essentially a
manifestation of antiblocking duality, namely, that
\(\abl{\abl{\ConvexSet}} = \ConvexSet\) for every convex
corner~\(\ConvexSet\); we shall make use of this fact throughout the
rest of the paper.  At the end of the section, we shall have an
expression for~\(\theta_2\) from~\eqref{eq:theta-original-chain}, and
we shall also introduce~\(\theta_1\) along the way.

In the next result, we follow the rules set
for~\cite[Eq.~(9.3.6)]{GroetschelLS93a} to interpret the quotient
\(w_i/s_i\), with \(w_i,s_i \in \Reals_+\):
\begin{claimeq}
  \label{eq:suppf-bdd-arithmetics}
  if \(w_i = 0\), then we take the fraction~\(w_i/s_i\) to be~\(0\), even if the
  denominator is~\(0\); if \(w_i > 0\) but the denominator is~\(0\),
  we take the fraction~\(w_i/s_i\) to be \(+\infty\).
\end{claimeq}

\begin{proposition}
  \label{prop:suppf-theta1-abl}
  Let \(\ConvexSet \subseteq \Reals^V\) be a convex corner.  Let \(w
  \in \Reals_+^V\).  Then
  \begin{equation}
    \label{eq:suppf-theta1}
    \suppf{\ConvexSet}{w}
    =
    \adjustlimits
    \min_{s \in \abl{\ConvexSet}}
    \max_{i \in V}
    \frac{w_i}{s_i}.
  \end{equation}
  In particular,
  \begin{equation}
    \label{eq:suppf-theta1-abl}
    \suppf{\abl{\ConvexSet}}{w}
    =
    \adjustlimits
    \min_{x \in \ConvexSet}
    \max_{i \in V}
    \frac{w_i}{x_i}.
  \end{equation}
  Moreover, all four optimization problems in~\eqref{eq:suppf-theta1} and~\eqref{eq:suppf-theta1-abl} have optimal solutions.
\end{proposition}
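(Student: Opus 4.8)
The plan is to reduce both identities to the single observation that the right-hand side of~\eqref{eq:suppf-theta1} is the gauge of the antiblocker evaluated at~$w$, which unwinds to $\suppf{\ConvexSet}{w}$, and then to obtain~\eqref{eq:suppf-theta1-abl} from~\eqref{eq:suppf-theta1} by substituting the convex corner $\abl{\ConvexSet}$ for $\ConvexSet$ and invoking $\abl{\abl{\ConvexSet}} = \ConvexSet$. Thus the real content is~\eqref{eq:suppf-theta1}.

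I would first dispose of the case $w = 0$: then $\suppf{\ConvexSet}{w} = 0$, every term $w_i/s_i$ equals~$0$ by the convention~\eqref{eq:suppf-bdd-arithmetics}, and $0 \in \abl{\ConvexSet}$, so both sides are~$0$ and both optimization problems are attained. Assume henceforth $w \neq 0$. The next step is a purely arithmetic reformulation: checking the three cases $w_i = 0$, $w_i > 0 = s_i$, and $w_i, s_i > 0$ against~\eqref{eq:suppf-bdd-arithmetics}, one verifies that for $t \in \Reals_+$ and $s \in \Reals_+^V$ one has $\max_{i \in V} w_i/s_i \leq t$ if and only if $w \leq ts$ coordinatewise. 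Since $w \neq 0$ forces $t > 0$, this rewrites the right-hand side of~\eqref{eq:suppf-theta1} as $\inf\setst*{t > 0}{\exists\, s \in \abl{\ConvexSet},\ w \leq ts}$.

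The key step is to collapse this feasibility condition. For $t > 0$, there is some $s \in \abl{\ConvexSet}$ with $w \leq ts$ if and only if $w/t \in \abl{\ConvexSet}$: necessity uses that $\abl{\ConvexSet}$ is lower-comprehensive (immediate from $\ConvexSet \subseteq \Reals_+^V$) together with $0 \leq w/t \leq s$, and sufficiency takes $s \coloneqq w/t$. By the definition of the antiblocker, $w/t \in \abl{\ConvexSet}$ is equivalent to $\iprod{w/t}{x} \leq 1$ for all $x \in \ConvexSet$, i.e.\ to $\suppf{\ConvexSet}{w} \leq t$; here $\suppf{\ConvexSet}{w}$ is finite and the supremum defining it is attained because $\ConvexSet$ is compact. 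Moreover $\suppf{\ConvexSet}{w} > 0$: choosing $x_0 \in \interior(\ConvexSet)$, the facts that $\ConvexSet \subseteq \Reals_+^V$ and $\ConvexSet$ is lower-comprehensive force $x_0 \in \Reals_{++}^V$, so $\suppf{\ConvexSet}{w} \geq \iprod{w}{x_0} > 0$ since $w \geq 0$, $w \neq 0$. Hence the infimum above equals $\inf\setst*{t > 0}{\suppf{\ConvexSet}{w} \leq t} = \suppf{\ConvexSet}{w}$, which proves~\eqref{eq:suppf-theta1}; it is attained, with $s \coloneqq w/\suppf{\ConvexSet}{w}$ an explicit optimal solution of the minimization (it lies in $\abl{\ConvexSet}$, and $\max_{i \in V} w_i/s_i = \suppf{\ConvexSet}{w}$).

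Finally, $\abl{\ConvexSet}$ is again a convex corner and $\abl{\abl{\ConvexSet}} = \ConvexSet$, so applying~\eqref{eq:suppf-theta1} with $\abl{\ConvexSet}$ in place of $\ConvexSet$ gives $\suppf{\abl{\ConvexSet}}{w} = \min_{x \in \ConvexSet} \max_{i \in V} w_i/x_i$, which is~\eqref{eq:suppf-theta1-abl}; its two optimization problems are attained by the same argument applied to the convex corner $\abl{\ConvexSet}$. I do not expect a genuine obstacle here: the only places that call for care are keeping the extended-arithmetic convention~\eqref{eq:suppf-bdd-arithmetics} straight in the reformulation $\max_i w_i/s_i \leq t \iff w \leq ts$, and recording that a convex corner contains a strictly positive point, which is exactly what upgrades the infima to attained minima when $w \neq 0$.
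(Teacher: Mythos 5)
Your proof is correct. It follows the same overall architecture as the paper's: prove~\eqref{eq:suppf-theta1} first, exhibit the explicit minimizer \(s = w/\suppf{\ConvexSet}{w}\) for the `\(\geq\)' direction and for attainment, and then derive~\eqref{eq:suppf-theta1-abl} by applying the first identity to the convex corner \(\abl{\ConvexSet}\) together with \(\abl{\abl{\ConvexSet}} = \ConvexSet\). Where you diverge is in the `\(\leq\)' direction: the paper argues it by the direct estimate \(\iprod{w}{x} = \sum_{i} (w_i/s_i)\, s_i x_i \leq \paren[\big]{\max_i w_i/s_i} \iprod{s}{x} \leq \max_i w_i/s_i\) for arbitrary \(x \in \ConvexSet\) and \(s \in \abl{\ConvexSet}\) (with a little bookkeeping of supports), whereas you first rewrite \(\max_i w_i/s_i \leq t\) as \(w \leq ts\) under the convention~\eqref{eq:suppf-bdd-arithmetics}, collapse the existential over \(s\) using lower-comprehensiveness of \(\abl{\ConvexSet}\), and land on the definition of the antiblocker. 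In effect you prove that the right-hand side of~\eqref{eq:suppf-theta1} equals \(\gauge{\abl{\ConvexSet}}{w}\) and then identify that gauge with \(\suppf{\ConvexSet}{w}\); this absorbs into one argument what the paper distributes over Proposition~\ref{prop:theta1-pre-theta2} and the polarity~\eqref{eq:suppf-gauge}. The trade-off is minor: your version avoids the support-set manipulation \(\supp(w) \subseteq \supp(s)\) at the cost of the case analysis behind the equivalence \(\max_i w_i/s_i \leq t \iff w \leq ts\), which you do carry out correctly. Your observation that any interior point of \(\ConvexSet\) is strictly positive (so \(\suppf{\ConvexSet}{w} > 0\) when \(w \neq 0\)) is needed in both treatments and you justify it adequately.
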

\begin{proof}
  We may assume that \(w \neq 0\).  Let us prove `\(\leq\)'.  Let \(x
  \in \ConvexSet\) and \(s \in \abl{\ConvexSet}\).  We may assume that
  the \(\max\) on the RHS is finite so that, by following the rules
  from~\eqref{eq:suppf-bdd-arithmetics}, we have \(W \coloneqq
  \supp(w) \subseteq \supp(s) \eqqcolon S\).  Then
  \begin{equation*}
    \begin{split}
      \iprod{w}{x}
      & =
      \sum_{
        i \in W
      }
      w_i x_i
      =
      \sum_{
        i \in S
      }
      \frac{w_i}{s_i}
      s_i x_i
      \leq
      \paren*{
        \max_{
          i \in S
        }
        \frac{w_i}{s_i}
      }
      \sum_{i \in V}
      s_i x_i \leq
      \max_{i \in V}
      \frac{w_i}{s_i},
    \end{split}
  \end{equation*}
  where~\eqref{eq:suppf-bdd-arithmetics} is only used in the rightmost
  term.  For the reverse inequality, let \(\theta \coloneqq
  \suppf{\ConvexSet}{w} > 0\).  Then \(s \coloneqq \tfrac{1}{\theta} w
  \in \abl{\ConvexSet}\).  Since \(\max_{i \in V} w_i/s_i = \theta\),
  we find that the RHS of~\eqref{eq:suppf-theta1} is bounded above by
  \(\theta = \suppf{\ConvexSet}{w}\).  This proves~`\(\geq\)'
  in~\eqref{eq:suppf-theta1}, as well as attainment for its RHS.
  Finally, \eqref{eq:suppf-theta1-abl} follows
  from~\eqref{eq:suppf-theta1} by antiblocking duality.
\end{proof}

We shall later formulate the parameter~\(\theta_1\) (see the
discussion in Subsection~\ref{sec:organization}) essentially
as the optimization problem in the RHS of~\eqref{eq:suppf-theta1-abl}
applied to a theta body.  In a way, that formulation is unnecessary
for the proof of the generalization of~\eqref{eq:abl-intro}, and it
may be further simplified as a line-search, i.e., by a gauge function:
\begin{proposition}
  \label{prop:theta1-pre-theta2}
  Let \(\ConvexSet \subseteq \Reals^V\) be a convex corner.  Let \(w
  \in \Reals_+^V\).  Then
  \begin{equation}
    \label{eq:theta1-pre-theta2}
    \adjustlimits
    \min_{x \in \ConvexSet}
    \max_{i \in V}
    \frac{w_i}{x_i}
    =
    \min\setst*{
      \lambda \in \Reals_+
    }{
      w \in \lambda \ConvexSet
    }.
  \end{equation}
  Moreover, the RHS is attained.
\end{proposition}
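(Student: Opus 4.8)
The plan is to reduce~\eqref{eq:theta1-pre-theta2} to the definition of scalar multiplication of~$\ConvexSet$ together with its lower-comprehensiveness, the point being that for $\lambda>0$ one has $w\in\lambda\ConvexSet$ precisely when some $x\in\ConvexSet$ dominates $w/\lambda$ coordinatewise. First I would dispose of the trivial case $w=0$: then $0\in\ConvexSet$ and $0\cdot\ConvexSet=\set{0}$, so both sides of~\eqref{eq:theta1-pre-theta2} equal~$0$. So assume from now on that $w\neq 0$.

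By Proposition~\ref{prop:suppf-theta1-abl}, the minimum on the left-hand side of~\eqref{eq:theta1-pre-theta2} is attained, say at $x^*\in\ConvexSet$; put $\bar\lambda\coloneqq\max_{i\in V} w_i/x^*_i$. Since $\ConvexSet$ has nonempty interior it contains a point with all coordinates positive, so the left-hand side of~\eqref{eq:theta1-pre-theta2} is finite; hence $\bar\lambda<\infty$, and $\bar\lambda>0$ because $w\neq 0$. Now comes the key step: as $\bar\lambda<\infty$, for every $i$ with $w_i>0$ the conventions~\eqref{eq:suppf-bdd-arithmetics} force $x^*_i>0$ and $w_i\leq\bar\lambda\,x^*_i$, while for $w_i=0$ the inequality $w_i\leq\bar\lambda\,x^*_i$ is immediate; thus $0\leq w/\bar\lambda\leq x^*$, and lower-comprehensiveness of~$\ConvexSet$ gives $w/\bar\lambda\in\ConvexSet$, that is, $w\in\bar\lambda\ConvexSet$. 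In particular the right-hand side of~\eqref{eq:theta1-pre-theta2} is at most~$\bar\lambda$ and is attained at~$\bar\lambda$.

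For the opposite inequality I would take an arbitrary $\lambda\in\Reals_+$ with $w\in\lambda\ConvexSet$; necessarily $\lambda>0$ since $w\neq 0$ and $0\cdot\ConvexSet=\set{0}$, so $x\coloneqq w/\lambda\in\ConvexSet$, and a one-line check using~\eqref{eq:suppf-bdd-arithmetics} gives $\max_{i\in V} w_i/x_i=\lambda$, whence the left-hand side of~\eqref{eq:theta1-pre-theta2} is at most~$\lambda$. Taking the infimum over all such~$\lambda$ shows that the left-hand side is at most the right-hand side. Combined with the previous paragraph, this yields that the left-hand side of~\eqref{eq:theta1-pre-theta2} is at most its right-hand side, which in turn is at most~$\bar\lambda$, which equals the left-hand side; hence all of these quantities coincide, and the right-hand minimum is attained at~$\bar\lambda$.

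I do not expect a real obstacle here: the argument is short, and the only points needing care are the bookkeeping with the extended arithmetic of~\eqref{eq:suppf-bdd-arithmetics} --- in particular the observation that finiteness of $\max_{i\in V} w_i/x_i$ forces $x_i>0$ wherever $w_i>0$ --- and the appeal to lower-comprehensiveness, which is exactly the structural property of convex corners that legitimizes replacing the coordinatewise minimax by a line search.
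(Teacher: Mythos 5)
Your proof is correct and follows essentially the same route as the paper's: one direction takes $x \coloneqq w/\lambda$ and evaluates the max under the conventions of~\eqref{eq:suppf-bdd-arithmetics}, while the other rescales $w$ by the attained value of the minimax and invokes lower-comprehensiveness to land back in~$\ConvexSet$. The only cosmetic differences are your explicit disposal of the case $w=0$ and your citation of Proposition~\ref{prop:suppf-theta1-abl} for attainment of the left-hand minimum, which the paper uses implicitly.
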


\begin{proof}
  We may assume that \(w \neq 0\).  First we show~`\(\leq\)'.  Let
  \(\lambda \in \Reals_+\) such that \(w \in \lambda \ConvexSet\).
  Then \(\lambda > 0\) since \(w \neq 0\).  Set \(x \coloneqq
  \frac{1}{\lambda} w \in \ConvexSet\).  Then \(w_i/x_i = \Iverson{w_i
    \neq 0} \lambda\) for every \(i \in V\), according to the rules
  from~\eqref{eq:suppf-bdd-arithmetics}, so that \(\max_{i \in V}
  w_i/x_i = \lambda\), whence the LHS of~\eqref{eq:theta1-pre-theta2}
  is \(\leq \lambda\).  This proves~`\(\leq\)'.

  For the reverse inequality, let \(x \in \ConvexSet\) attain the LHS
  of~\eqref{eq:theta1-pre-theta2}, and let \(\lambda \coloneqq \max_{i
    \in V} w_i/x_i\).  Since \(w \neq 0\), we have \(\lambda > 0\).
  It is easy to check that \(y \coloneqq \tfrac{1}{\lambda} w\)
  satisfies \(y \leq x\).  Since \(0 \leq y \leq x \in \ConvexSet\)
  and \(\ConvexSet\) is lower-comprehensive, we find that \(y \in
  \ConvexSet\), i.e., \(w \in \lambda \ConvexSet\).  This
  proves~`\(\geq\)' on~\eqref{eq:theta1-pre-theta2}, as well as
  attainment in its RHS.
\end{proof}

The RHS of~\eqref{eq:theta1-pre-theta2} is, by definition, the gauge function
\(\gauge{\ConvexSet}{w}\) of~\(\ConvexSet\) at~\(w\), i.e.,
\(\gauge{\ConvexSet}{x}\) is defined as
\begin{equation}
  \label{eq:def-gauge}
  \gauge{\ConvexSet}{x}
  \coloneqq
  \inf\setst[\big]{
    \mu
  }{
    \mu \in \Reals_+,\,
    x \in \mu \ConvexSet
  }
  \qquad
  \forall x \in \Reals^V.
\end{equation}
From Propositions~\ref{prop:suppf-theta1-abl}
and~\ref{prop:theta1-pre-theta2}, we recover the fact that
\begin{equation}
  \label{eq:suppf-gauge}
  \text{%
    for a convex corner \(\ConvexSet \subseteq \Reals_+^V\),
    we have \(\suppf{\abl{\ConvexSet}}{\cdot} = \gauge{\ConvexSet}{\cdot}\)
    on \(\Reals_+^V\);
  }
\end{equation}
see~\cite[Theorem~14.5]{Rockafellar97a}.

A gauge function is oblivious to the upper surface of a set which is
``almost'' a convex corner:
\begin{proposition}
  \label{prop:gauge-non-closed}
  Let \(\ConvexSet \subseteq \Reals_+^V\) be a lower-comprehensive
  convex set with nonempty interior.  Then
  \begin{equation}
    \label{eq:gauge-non-closed}
    \gauge{\ConvexSet}{w}
    =
    \gauge{\cl(\ConvexSet)}{w}
    \qquad
    \forall w \in \Reals_+^V.
  \end{equation}
\end{proposition}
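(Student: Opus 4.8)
The plan is to prove the two inequalities $\gamma(w\mid\ConvexSet)\geq\gamma(w\mid\cl(\ConvexSet))$ and $\gamma(w\mid\ConvexSet)\leq\gamma(w\mid\cl(\ConvexSet))$ separately. The first is immediate: since $\ConvexSet\subseteq\cl(\ConvexSet)$, any $\mu\in\Reals_+$ with $w\in\mu\ConvexSet$ also has $w\in\mu\cl(\ConvexSet)$, so the infimum defining $\gamma(w\mid\cl(\ConvexSet))$ is over a superset and hence no larger. The content is entirely in the reverse inequality $\gamma(w\mid\ConvexSet)\leq\gamma(w\mid\cl(\ConvexSet))$.

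For the reverse inequality, fix $w\in\Reals_+^V$ and set $\mu\coloneqq\gamma(w\mid\cl(\ConvexSet))$. If $w=0$ then both gauges are $0$ and there is nothing to prove, so assume $w\neq 0$, which forces $\mu>0$. The goal is to show $w\in\lambda\ConvexSet$ for every $\lambda>\mu$ (then taking the infimum over such $\lambda$ gives $\gamma(w\mid\ConvexSet)\leq\mu$). Since $\lambda>\mu=\gamma(w\mid\cl(\ConvexSet))$, we have $w\in\lambda\cl(\ConvexSet)$, i.e.\ $\tfrac1\lambda w\in\cl(\ConvexSet)$. The plan is now to perturb $\tfrac1\lambda w$ downward by a small amount so that it lands in $\ConvexSet$ itself, using lower-comprehensiveness. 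Concretely, pick a point $p\in\interior(\ConvexSet)$ (which exists by hypothesis). Then for $t\in(0,1)$ small enough, the point $q_t\coloneqq (1-t)\bigl(\tfrac1\lambda w\bigr)+t\,p$ — a convex combination of a point of $\cl(\ConvexSet)$ with an interior point — lies in $\interior(\ConvexSet)$, by the standard line-segment principle for convex sets (\cite[Theorem~6.1]{Rockafellar97a}). In particular $q_t\in\ConvexSet$.

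The remaining step is to compare $\tfrac1\lambda w$ with $q_t$ coordinatewise. This does \emph{not} hold directly, since $p$ may have coordinates smaller than those of $\tfrac1\lambda w$; instead the plan is to exploit the slack between $\mu$ and $\lambda$. Write $y\coloneqq\tfrac1\mu w\in\cl(\ConvexSet)$, so $\tfrac1\lambda w=\tfrac\mu\lambda\,y$ with $\tfrac\mu\lambda<1$. Since $\interior(\ConvexSet)\neq\emptyset$ and $\ConvexSet$ is lower-comprehensive, it contains some $p\in\Reals_{++}^V$ (shrink an interior point down, or take $p=\tfrac12 p_0$ for $p_0\in\interior(\ConvexSet)$ together with the fact that lower-comprehensive sets with interior contain a strictly positive vector — cf.\ the last line of the proof of Proposition~\ref{prop:TH-AK-almost-convex-corner}). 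Now for $t\in(0,1)$, set $q_t\coloneqq (1-t)y+t\,p\in\ConvexSet$ (convex combination of two points of $\cl(\ConvexSet)$ lying in $\interior(\ConvexSet)$ for $t>0$; in fact $q_t\in\ConvexSet$ suffices). I claim that for $t$ small enough, $\tfrac1\lambda w=\tfrac\mu\lambda y\leq q_t$ coordinatewise: indeed $q_t-\tfrac\mu\lambda y=(1-t-\tfrac\mu\lambda)y+t\,p$, and since $1-\tfrac\mu\lambda>0$ is a fixed positive quantity while $y\geq 0$ and $p>0$, for all sufficiently small $t>0$ the coefficient $1-t-\tfrac\mu\lambda$ is still positive, so $q_t-\tfrac\mu\lambda y\geq t\,p>0\geq 0$. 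Hence $0\leq\tfrac1\lambda w\leq q_t\in\ConvexSet$, and lower-comprehensiveness of $\ConvexSet$ gives $\tfrac1\lambda w\in\ConvexSet$, i.e.\ $w\in\lambda\ConvexSet$. Taking the infimum over $\lambda>\mu$ yields $\gamma(w\mid\ConvexSet)\leq\mu=\gamma(w\mid\cl(\ConvexSet))$, completing the proof.

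The only mild subtlety — the ``hard part'' such as it is — is confirming that a lower-comprehensive convex set with nonempty interior contains a strictly positive vector; this is exactly the kind of argument already run at the end of the proof of Proposition~\ref{prop:TH-AK-almost-convex-corner}, and can simply be cited or repeated in one line. Everything else is routine convex-analysis bookkeeping.
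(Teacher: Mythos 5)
Your argument is, at its core, the same as the paper's: take the scaled point $\tfrac1\lambda w \in \cl(\ConvexSet)$, push it slightly toward a strictly positive interior point so that the resulting convex combination lies in $\interior(\ConvexSet)$, observe that the slack $\lambda > \mu$ makes this combination dominate $\tfrac1\lambda w$ coordinatewise, and finish with lower-comprehensiveness. The paper parametrizes the perturbation as $\frac{\mu}{M}\ones + \frac{1-\mu}{\lambda}w$ with $\mu = \eps/(\lambda+\eps)$, which is just a different bookkeeping of your $q_t = (1-t)y + tp$.

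There is, however, one genuine (if small) gap: the claim that $w \neq 0$ forces $\mu \coloneqq \gauge{\cl(\ConvexSet)}{w} > 0$ is false under the stated hypotheses, because $\ConvexSet$ is not assumed bounded. For example, with $\ConvexSet = \Reals_+^V$ (or $\Reals_+ \times [0,1)$ and $w = e_1$), one has $\mu = 0$ for nonzero $w$. In that case your definition $y \coloneqq \tfrac1\mu w$ is meaningless, and you also cannot invoke attainment of the infimum in the form $w \in \mu\,\cl(\ConvexSet)$. The repair is exactly the paper's formulation: instead of working with the infimum $\mu$ itself, fix an arbitrary $\lambda' > 0$ with $w \in \lambda'\cl(\ConvexSet)$ and show $w \in \lambda\ConvexSet$ for every $\lambda > \lambda'$, running your argument with $y \coloneqq \tfrac{1}{\lambda'}w$ and the ratio $\lambda'/\lambda < 1$ in place of $\mu/\lambda$. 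With that adjustment (and the case $\lambda' = 0$, i.e.\ $w=0$, handled separately as you already do), your proof is complete and coincides with the paper's.
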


\begin{proof}
  The proof of~`\(\geq\)' is obvious.  For the reverse inequality, let
  \(w \in \Reals_+^V\) and let \(\lambda \in \Reals_+\) such that \(w
  \in \lambda \cl(\ConvexSet)\).  If \(\lambda = 0\), then \(w = 0\)
  and \(\gauge{\ConvexSet}{w} = 0 = \gauge{\cl(\ConvexSet)}{w}\), so
  assume \(\lambda > 0\).  We will show that \(w \in (\lambda +
  \eps)\ConvexSet\) for every \(\eps > 0\).  Let \(\eps > 0\).  Since
  \(\ConvexSet\) is lower-comprehensive and has nonempty interior,
  there exists \(M \in \Reals_{++}\) such that \(\ones/M \in
  \interior(\ConvexSet)\).  Thus, for every \(\mu \in \Reals\) such
  that \(0 < \mu \leq 1\), we have \(\frac{\mu}{M} \ones +
  \frac{1-\mu}{\lambda} w \in \interior(\ConvexSet)\).  For \(\mu
  \coloneqq \eps/(\lambda + \eps)\), this gives
  \(\frac{\eps}{M(\lambda + \eps)} \ones + \frac{1}{\lambda + \eps} w
  \in \interior(\ConvexSet)\), and since \(\ConvexSet\) is
  lower-comprehensive, we get \(w \in (\lambda + \eps)\ConvexSet\).
  Since \(\eps > 0\) was arbitrary, this proves~`\(\leq\)'
  in~\eqref{eq:gauge-non-closed}.
\end{proof}

We are now ready to show how an optimization problem
over~\(\Schurlift{\QuadCone}\) may sometimes be reduced to an
optimization problem over~\(\QuadCone\).  We shall use the following
simple fact:
\begin{equation}
  \label{eq:Schur-simplify}
  \text{
    if \(\QuadCone \supseteq \Psd{V}\), then
  }
  \begin{bmatrix}
    1 & x^{\transp} \\
    x & X
  \end{bmatrix}
  \in
  \Schurlift{\QuadCone}
  \text{ if and only if }
  X \succeq_{\QuadCone} \oprodsym{x}.
\end{equation}
\begin{proposition}
  \label{prop:theta-theta2}
  Let \(\AdjCone \subseteq \Sym{V}\) and \(\liftedQuadCone \subseteq
  \Sym{\setlift{V}}\) be diagonally scaling-invariant closed convex
  cones such that~\eqref{eq:A-big-enough}
  and~\eqref{eq:TH-AK-hypotheses-K} hold.  Let \(w \in \Reals_+^V\).
  Then
  \begin{equation}
    \label{eq:pre-theta2}
    \suppf{
      \abl{\cl\paren{\genTH{\AdjCone}{\liftedQuadCone}}}
    }{
      w
    }
    =
    \inf \setst*{
      \lambda \in \Reals_+
    }{
      W \in \AdjCone,\,
      \diag(W) = \lambda \ones,\,
      \begin{bmatrix}
        1        & \sqrt{w}^{\transp}\thinspace \\
        \sqrt{w} & W                            \\
      \end{bmatrix}
      \in
      \liftedQuadCone
    }.
  \end{equation}
  In particular, if \(\AdjCone\) is polyhedral, then
  \begin{equation}
    \label{eq:theta2-polyhedral}
    \suppf{
      \abl{\cl\paren{\genTH{\AdjCone}{\liftedQuadCone}}}
    }{
      w
    }
    =
    \inf \setst*{
      \lambda \in \Reals_+
    }{
      Y \in -\AdjCone \cap \Null(\diag),\,
      \begin{bmatrix}
        1        & \sqrt{w}^{\transp}\thinspace \\
        \sqrt{w} & \lambda I - Y                \\
      \end{bmatrix}
      \in
      \liftedQuadCone
    },
  \end{equation}
  and if \(\QuadCone \subseteq \Sym{V}\) is a diagonally
  scaling-invariant closed convex cone such that \(\QuadCone \supseteq
  \Psd{V}\) and \(\diag(\QuadCone) \subseteq \Reals_+^V\) then
  \begin{equation}
    \label{eq:theta2-polyhedral-Schur}
    \suppf{
      \abl{\cl\paren{\genTH{\AdjCone}{\Schurlift{\QuadCone}}}}
    }{
      w
    }
    =
    \inf \setst*{
      \lambda
    }{
      \lambda I \succeq_{\QuadCone} Y + \oprodsym{\sqrt{w}},\,
      Y \in - \AdjCone \cap \Null(\diag)
    }.
  \end{equation}
  Moreover, for each of~\eqref{eq:pre-theta2},
  \eqref{eq:theta2-polyhedral}, and
  \eqref{eq:theta2-polyhedral-Schur}, if the theta body on the LHS
  is closed, then the optimization problem on the RHS has an optimal
  solution.
\end{proposition}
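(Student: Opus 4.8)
The plan is to reduce all three identities to~\eqref{eq:pre-theta2} and then to prove~\eqref{eq:pre-theta2} by recognizing both sides as the gauge of the theta body at~$w$. First I would note that, by Proposition~\ref{prop:TH-AK-almost-convex-corner}, $\genTH{\AdjCone}{\liftedQuadCone}$ is a lower-comprehensive convex subset of~$[0,1]^V$ with nonempty interior whose closure is a convex corner. Hence~\eqref{eq:suppf-gauge} (itself a consequence of Propositions~\ref{prop:suppf-theta1-abl} and~\ref{prop:theta1-pre-theta2}) applied to $\cl(\genTH{\AdjCone}{\liftedQuadCone})$, combined with Proposition~\ref{prop:gauge-non-closed}, gives $\suppf{\abl{\cl(\genTH{\AdjCone}{\liftedQuadCone})}}{w} = \gauge{\cl(\genTH{\AdjCone}{\liftedQuadCone})}{w} = \gauge{\genTH{\AdjCone}{\liftedQuadCone}}{w}$, which by the definition~\eqref{eq:def-gauge} of the gauge equals $\inf\setst*{\mu \in \Reals_+}{w \in \mu\,\genTH{\AdjCone}{\liftedQuadCone}}$. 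So the left-hand side of~\eqref{eq:pre-theta2} is exactly the gauge of the theta body at~$w$, and the case $w = 0$ is trivial.

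Next I would match this infimum with the right-hand side of~\eqref{eq:pre-theta2} via a diagonal-scaling correspondence. Given $\mu > 0$ with $w/\mu \in \genTH{\AdjCone}{\liftedQuadCone}$, pick $\Xh \in \liftedGenTH{\AdjCone}{\liftedQuadCone}$ with $\diag(\Xh[V]) = w/\mu$; applying $\DiagScaleMap{1 \oplus h}$ with $h_i \coloneqq \Iverson{w_i \neq 0}\,\mu/\sqrt{w_i}$ makes the $0$th row of the $V$-block equal to~$\sqrt{w}$ while turning its diagonal into $\mu\,\incidvector{\supp(w)}$, and then adding $\mu\Diag(\incidvector{V \drop \supp(w)})$ to the $V$-block (legitimate because that matrix lies in~$\AdjCone$ by~\eqref{eq:A-big-enough}, and its lift lies in~$\liftedQuadCone$ since $\oprodsym{e_i} \in \liftedQuadCone$ for each $i \in V$ by~\eqref{eq:lifted-K-big-enough} and diagonal scaling invariance) produces $W \in \AdjCone$ with $\diag(W) = \mu\ones$ and $\onelift{\sqrt{w}}{W} \in \liftedQuadCone$. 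Conversely, from a feasible pair $(W,\lambda)$ for the right-hand side with $\lambda > 0$, conjugating $\onelift{\sqrt{w}}{W}$ by $\DiagScaleMap{1 \oplus (\sqrt{w}/\lambda)}$ produces an element of $\liftedGenTH{\AdjCone}{\liftedQuadCone}$ whose $V$-block has diagonal $w/\lambda$, so $w \in \lambda\,\genTH{\AdjCone}{\liftedQuadCone}$. This matches the two infima up to the degenerate value $\lambda = 0$, which I would handle by showing that if $\onelift{\sqrt{w}}{W} \in \liftedQuadCone$ with $\diag(W) = 0$ then $w = 0$: conjugating by $\DiagScaleMap{1 \oplus t\ones}$ and adding $\sum_{i \in V} t\sqrt{w_i}\,\oprodsym{e_i} \in \liftedQuadCone$ gives, for every $t > 0$, a matrix in $\liftedQuadCone$ satisfying the hypotheses of~\eqref{eq:lifted-K-not-too-big} with diagonal $1 \oplus t\sqrt{w}$, forcing $t\sqrt{w_i} \leq 1$ for all $t$.

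For~\eqref{eq:theta2-polyhedral} I would use that, since $\AdjCone$ is polyhedral with $\Image(\Diag) \subseteq \AdjCone$, Proposition~\ref{prop:A-Diag-closed-polyhedral} and the remark following it force $\AdjCone = \PolyAdjCone{E^+}{E^-}$ for some $E^+, E^-$, a cone whose defining inequalities involve only off-diagonal entries; hence $Y \coloneqq \lambda I - W$ is a bijection from $\setst*{W \in \AdjCone}{\diag(W) = \lambda\ones}$ onto $-\AdjCone \cap \Null(\diag)$ (using $\lambda I \in \AdjCone$ for one direction and the off-diagonal-only structure for the other), and substituting $W = \lambda I - Y$ into~\eqref{eq:pre-theta2} yields~\eqref{eq:theta2-polyhedral}. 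For~\eqref{eq:theta2-polyhedral-Schur}, under the same polyhedrality assumption, Theorem~\ref{thm:schur-K} guarantees that $\Schurlift{\QuadCone}$ is a diagonally scaling-invariant closed convex cone satisfying~\eqref{eq:TH-AK-hypotheses-K}, so~\eqref{eq:pre-theta2} and~\eqref{eq:theta2-polyhedral} apply with $\liftedQuadCone = \Schurlift{\QuadCone}$; then~\eqref{eq:Schur-simplify} (valid since $\QuadCone \supseteq \Psd{V}$) rewrites $\onelift{\sqrt{w}}{\lambda I - Y} \in \Schurlift{\QuadCone}$ as $\lambda I \succeq_{\QuadCone} Y + \oprodsym{\sqrt{w}}$, and the constraint $\lambda \geq 0$ drops out of the infimum because $\diag(\QuadCone) \subseteq \Reals_+^V$ forces $\lambda\ones - w \geq 0$. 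Finally, for the attainment statement: when $\genTH{\AdjCone}{\liftedQuadCone}$ is closed it is a convex corner by Proposition~\ref{prop:TH-AK-almost-convex-corner}, so Proposition~\ref{prop:theta1-pre-theta2} provides a $\lambda^*$ attaining $\gauge{\genTH{\AdjCone}{\liftedQuadCone}}{w}$ with $w \in \lambda^*\,\genTH{\AdjCone}{\liftedQuadCone}$; the forward construction above turns this into an optimal solution of the right-hand side of~\eqref{eq:pre-theta2}, and the substitutions carry attainment over to~\eqref{eq:theta2-polyhedral} and~\eqref{eq:theta2-polyhedral-Schur}.

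The step I expect to require the most care is the diagonal-scaling correspondence of the second paragraph: the scalings used to pass back and forth are not invertible at coordinates~$i$ with $w_i = 0$, so one must re-inflate the corresponding diagonal entries of the $V$-block using $\Image(\Diag) \subseteq \AdjCone$ together with the rank-one matrices $\oprodsym{e_i} \in \liftedQuadCone$, and one must separately exclude spurious feasible pairs with $\lambda = 0$ and $w \neq 0$; the rest is routine manipulation of diagonal scalings and Schur complements.
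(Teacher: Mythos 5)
Your proof is correct and follows essentially the same route as the paper's: both identify the left-hand side with the gauge \(\gauge{\genTH{\AdjCone}{\liftedQuadCone}}{w}\) via Propositions~\ref{prop:TH-AK-almost-convex-corner} and~\ref{prop:gauge-non-closed} and~\eqref{eq:suppf-gauge}, then rewrite the condition \(w \in \lambda\genTH{\AdjCone}{\liftedQuadCone}\) by diagonal scalings, re-inflating the diagonal on \(V \drop \supp(w)\) using~\eqref{eq:A-big-enough} and~\eqref{eq:lifted-K-big-enough}, and finish the polyhedral and Schur cases exactly as you do. The only cosmetic difference is that the paper excludes the spurious value \(\lambda = 0\) by a convexity argument (the relaxed feasible set of \(\lambda\)'s would contain \(0\), contradicting that the gauge is positive), whereas you rule it out directly via~\eqref{eq:lifted-K-not-too-big}; both are valid.
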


\begin{proof}
  \newcommand*{\AdjConeB}{\AdjCone'}
  We may assume that \(w \neq 0\).  From
  Propositions~\ref{prop:TH-AK-almost-convex-corner}
  and~\ref{prop:gauge-non-closed} and from~\eqref{eq:suppf-gauge}, we
  have
  \begin{equation}
    \label{eq:theta2-suppf-gauge}
    \suppf{\abl{\cl\paren{\genTH{\AdjCone}{\liftedQuadCone}}}}{w}
    =
    \gauge{\genTH{\AdjCone}{\liftedQuadCone}}{w}
    =
    \inf \Lambda,
    \quad
    \text{where }
    \Lambda
    \coloneqq
    \setst*{
      \lambda \in \Reals_{++}
    }{
      w \in \lambda \genTH{\AdjCone}{\liftedQuadCone}
    }
  \end{equation}
  Note that \(\Lambda\) is convex, unbounded above, and bounded away
  from zero since the LHS of~\eqref{eq:theta2-suppf-gauge} is
  positive by Proposition~\ref{prop:TH-AK-almost-convex-corner}.
  Moreover, \(\Lambda\) is closed if
  \(\genTH{\AdjCone}{\liftedQuadCone}\) is closed by the attainment
  statement in Proposition~\ref{prop:theta1-pre-theta2}.  We may
  reformulate the set~\(\Lambda\) as
  \begin{equation}
    \label{eq:pre-theta2-aux1}
    \begin{split}
      \Lambda
      & =
      \setst*{
        \lambda \in \Reals_{++}
      }{
        W \in \AdjCone,\,
        \diag(W) = \tfrac{1}{\lambda} w,\,
        \begin{bmatrix*}
          1                   & \frac{1}{\lambda} w^{\transp}\thinspace \\
          \frac{1}{\lambda} w & W                                       \\
        \end{bmatrix*}
        \in \liftedQuadCone
      }
      \\
      & =
      \setst*{
        \lambda \in \Reals_{++}
      }{
        X \in \AdjCone,\,
        \diag(X) = \lambda \incidvector{\supp(w)},\,
        \begin{bmatrix}
          1        & \sqrt{w}^{\transp}\thinspace \\
          \sqrt{w} & X                            \\
        \end{bmatrix}
        \in
        \liftedQuadCone
      }
      \\
      & =
      \setst*{
        \lambda \in \Reals_{++}
      }{
        X \in \AdjCone,\,
        \diag(X) = \lambda \ones,\,
        \begin{bmatrix}
          1        & \sqrt{w}^{\transp}\thinspace \\
          \sqrt{w} & X                            \\
        \end{bmatrix}
        \in
        \liftedQuadCone
      },
    \end{split}
  \end{equation}
  where we used the diagonal scaling invariance of~\(\AdjCone\)
  and~\(\liftedQuadCone\) and the change of variable
  \begin{equation*}
    \Xh
    =
    \DiagScaleMap{1 \oplus \lambda w^{-1/2}}\paren*{
      \begin{bmatrix*}
        1                   & \frac{1}{\lambda} w^{\transp}\thinspace \\
        \frac{1}{\lambda} w & W                                       \\
      \end{bmatrix*}
    },
    \qquad
    \text{with }
    \component{w^{-1/2}}{i}
    \coloneqq
    \begin{cases*}
      w_i^{-1/2} & if \(w_i > 0\), \\
      1          & otherwise,      \\
    \end{cases*}
  \end{equation*}
  for the second equation, and the diagonal scaling invariance
  of~\(\AdjCone\) and~\(\liftedQuadCone\) and
  assumptions~\eqref{eq:A-big-enough}
  and~\eqref{eq:lifted-K-big-enough} for the last equation.  To
  prove~\eqref{eq:pre-theta2}, it now suffices to show that
  relaxing the constraint \(\lambda \in \Reals_{++}\) to \(\lambda \in
  \Reals_+\) in the RHS of~\eqref{eq:pre-theta2-aux1} does not
  change the set.  If it did, the relaxed set, which is convex, would
  contain~\(0\), and so~\(\Lambda\) would not be bounded away from
  zero, a contradiction.

  Suppose that \(\AdjCone\) is polyhedral.  It is easy to check that
  \(\AdjCone \cap \diag^{-1}(\lambda\ones) = \lambda I
  - \paren{-\AdjCone \cap \Null(\diag)}\); the inclusion
  `\(\supseteq\)' is obvious, whereas the reverse inclusion follows
  from Proposition~\ref{prop:A-Diag-closed-polyhedral}.  Thus,
  \eqref{eq:theta2-polyhedral} follows.
  Equation~\eqref{eq:theta2-polyhedral-Schur} follows
  from~\eqref{eq:theta2-polyhedral} and Theorem~\ref{thm:schur-K},
  using the equivalence~\eqref{eq:Schur-simplify}.  The constraint
  \(\lambda \in \Reals_+\) may be dropped since \(\diag(\QuadCone)
  \subseteq \Reals_+^V\).  In all cases, attainment if the theta body
  is closed follows from the closedness of~\(\Lambda\).
\end{proof}

\section{A Plethora of Theta Functions}
\label{sec:plethora}

We have now introduced all formulations of the
parameters~\(\theta_i\)'s from~\eqref{eq:theta-original-chain} and we
are ready to prove that they are all equal.  Let \(\AdjCone \subseteq
\Sym{V}\) be a diagonally scaling-invariant polyhedral cone such that
\(\Image(\Diag) \subseteq \AdjCone\). Let \(\QuadCone \subseteq
\Psd{V}\) be a diagonally scaling-invariant closed convex cone such
that \(\Diag(\Reals_+^V) \subseteq \QuadCone\) and
\(\interior(\QuadCone) \neq \emptyset\).  For each \(w \in
\Reals_+^V\), define:
\begin{gather*}
  \theta(\AdjCone, \QuadCone; w)
  \coloneqq
  \suppf[\Big]{
    \abl[\big]{\cl\paren[\big]{
        \genTH[\big]{
          \thinspace\AdjDual{\AdjCone}
        }{
          \Schurlift{\dual{\QuadCone}}}
      }
    }
  }{
    w
  },
  \\
  \theta_1(\AdjCone, \QuadCone; w)
  \coloneqq
  \inf \setst*{
    \max_{i \in V}
    \frac{w_i}{x_i}
  }{
    x \in \cl\paren[\big]{
      \genTH[\big]{
        \thinspace\AdjDual{\AdjCone}
      }{
        \Schurlift{\dual{\QuadCone}}
      }
    }
  },
  \\
  \theta_2(\AdjCone, \QuadCone; w)
  \coloneqq
  \inf\setst*{
    \lambda
  }{
    \lambda I \succeq_{\dual{\QuadCone}} Y + \oprodsym{\sqrt{w}},\,
    Y \in -\AdjDual{\AdjCone} \cap \Null(\diag)
  },
  \\
  \theta_3(\AdjCone, \QuadCone; w)
  \coloneqq
  \sup\setst*{
    \qform{X}{\sqrt{w}}
  }{
    \trace(X) = 1,\,
    X \in \QuadCone,\,
    X \in \AdjCone
  },
  \\
  \theta_4(\AdjCone, \QuadCone; w)
  \coloneqq
  \suppf[\big]{
    \genTH{\AdjCone}{\Psdlift{\QuadCone}}
  }{
    w
  }.
\end{gather*}
Here, the objective function for \(\theta_1(\AdjCone, \QuadCone; w)\)
is evaluated according to the arithmetic rules
from~\eqref{eq:suppf-bdd-arithmetics}.  For concreteness, we shall
finally define the Lovász theta number and the variants~\(\theta'\)
and~\(\theta^+\) as special cases of the above parameters.  Let \(G =
(V,E)\) be a graph.  For each \(w \in \Reals_+^V\), define
\begin{subequations}
  \label{eq:standard-theta-defs}
  \begin{gather}
    \theta(G;w) \coloneqq \theta\paren{ \PolyAdjCone{E}{E}, \Psd{V}; w
    },
    \\
    \theta'(G;w) \coloneqq \theta\paren{
      \PolyAdjCone{E\cup\overline{E}}{E}, \Psd{V}; w },
    \\
    \theta^+(G;w) \coloneqq \theta\paren{ \PolyAdjCone{\emptyset}{E},
      \Psd{V}; w }.
  \end{gather}
\end{subequations}

\begin{theorem}
  \label{thm:all-thetas-equal}
  Let \(\AdjCone \subseteq \Sym{V}\) be a diagonally scaling-invariant
  polyhedral cone such that \(\Image(\Diag) \subseteq \AdjCone\).  Let
  \(\QuadCone \subseteq \Psd{V}\) be a diagonally scaling-invariant
  closed convex cone such that \(\Diag(\Reals_+^V) \subseteq \QuadCone\) and \(\interior(\QuadCone) \neq \emptyset\).
  Let \(w \in \Reals_+^V\).  Then
  \begin{equation}
    \label{eq:thetaA-equal}
    \theta(\AdjCone, \QuadCone; w)
    =
    \theta_1(\AdjCone, \QuadCone; w)
    =
    \theta_2(\AdjCone, \QuadCone; w)
    =
    \theta_3(\AdjCone, \QuadCone; w)
    =
    \theta_4(\AdjCone, \QuadCone; w).
  \end{equation}
  Moreover, all optimization problems in~\eqref{eq:thetaA-equal} have
  optimal solutions except possibly for~\(\theta_2\), which has an
  optimal solution
  if~\(\genTH[\big]{\thinspace\AdjDual{\AdjCone}}{\Schurlift{\dual{\QuadCone}}}\)
  is closed.  Furthermore,
  \begin{equation}
    \label{eq:thetaA-abl}
    \abl[\big]{
      \cl\paren[\big]{
        \genTH[\big]{
          \thinspace\AdjDual{\AdjCone}
        }{
          \Schurlift{\dual{\QuadCone}}
        }
      }
    }
    =
    \genTH{
      \AdjCone
    }{
      \Psdlift{\QuadCone}
    }.
  \end{equation}
\end{theorem}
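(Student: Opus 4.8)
The plan is to derive $\eqref{eq:thetaA-equal}$ by stitching together the reformulations proved in Sections~\ref{sec:psd-liftings}--\ref{sec:abl-reformulation} and then closing the loop with one conic strong-duality argument; $\eqref{eq:thetaA-abl}$ will follow formally afterwards. The first thing to nail down is that the two bodies in $\eqref{eq:thetaA-abl}$ are convex corners. By Proposition~\ref{prop:A-Diag-closed-polyhedral} and $\Image(\Diag) \subseteq \AdjCone$ we have $\AdjCone = \PolyAdjCone{E^+}{E^-}$, so $\AdjDual{\AdjCone} = \PolyAdjCone{\overline{E^+}}{\overline{E^-}}$ is again polyhedral, diagonally scaling-invariant and contains $\Image(\Diag)$; and $\QuadCone \subseteq \Psd{V}$ together with $\Diag(\Reals_+^V) \subseteq \QuadCone$ gives $\dual{\QuadCone} \supseteq \Psd{V}$ and $\diag(\dual{\QuadCone}) \subseteq \Reals_+^V$, while $\interior(\QuadCone) \neq \emptyset$ keeps $\dual{\QuadCone}$ pointed. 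Hence Theorem~\ref{thm:schur-K} applies to $(\AdjDual{\AdjCone},\dual{\QuadCone})$ and shows that $\ConvexSet \coloneqq \cl\paren[\big]{\genTH{\AdjDual{\AdjCone}}{\Schurlift{\dual{\QuadCone}}}}$ is a convex corner, and Corollary~\ref{cor:TH-AK-convex-corner} shows $\genTH{\AdjCone}{\Psdlift{\QuadCone}}$ is a closed convex corner (using $\Psdlift{\QuadCone} \subseteq \Psd{\setlift{V}}$ and $\oprodsym{e_i} \in \QuadCone$).

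With this in hand, most of the chain is already available. The equality $\theta = \theta_1$ is $\eqref{eq:suppf-theta1-abl}$ of Proposition~\ref{prop:suppf-theta1-abl} for $\ConvexSet$, with attainment of $\theta_1$. Next, $\theta_1 = \theta_2$ follows by reading $\eqref{eq:theta2-polyhedral-Schur}$ of Proposition~\ref{prop:theta-theta2} with $\AdjCone$ replaced by $\AdjDual{\AdjCone}$ and $\QuadCone$ by $\dual{\QuadCone}$ (its hypotheses hold by Theorem~\ref{thm:schur-K}): its left-hand side is $\theta$ by definition and its right-hand side is $\theta_2$ by definition, and the attainment clause there gives the stated conditional attainment of $\theta_2$. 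Dually, $\theta_3 = \theta_4$ is Theorem~\ref{thm:theta3-theta4} for $(\AdjCone,\QuadCone)$ after the identifications $\iprod{\oprodsym{\sqrt{w}}}{X} = \qform{X}{\sqrt{w}}$ and $\iprod{I}{X} = \trace(X)$, which also yields attainment of $\theta_3$ and $\theta_4$. Finally, taking support functions (monotone under inclusion) in Proposition~\ref{prop:psd-schur-duality} gives $\theta_4 \leq \theta$, i.e. $\theta_3 \leq \theta_2$; and $\theta$ is attained because convex corners are compact.

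Everything thus comes down to the reverse inequality $\theta_2 \leq \theta_3$, which I expect to be the crux and the one genuine use of Duality Theory. First I would simplify the feasible set of $\theta_2$: since $\Image(\Diag) \subseteq \AdjCone$ forces $\dual{\AdjCone} \subseteq \Null(\diag)$, and $\AdjDual{\AdjCone} = \Image(\Diag) - \dual{\AdjCone}$, one checks $-\AdjDual{\AdjCone} \cap \Null(\diag) = \dual{\AdjCone}$, so $\theta_2 = \inf\setst*{\lambda \in \Reals}{B \in \dual{\AdjCone},\ \lambda I - B - \oprodsym{\sqrt{w}} \in \dual{\QuadCone}}$. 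Dualizing this conic program — a multiplier $X \in \Sym{V}$ for the affine constraint, the free variable $\lambda$ forcing $\trace(X) = 1$, the cone variable $B$ forcing $X \in \dual{\dual{\AdjCone}} = \AdjCone$, the $\dual{\QuadCone}$-slack forcing $X \in \dual{\dual{\QuadCone}} = \QuadCone$ — produces exactly $\theta_3 = \sup\setst*{\qform{X}{\sqrt{w}}}{\trace(X) = 1,\ X \in \AdjCone \cap \QuadCone}$, with weak conic duality reproving $\theta_3 \leq \theta_2$. To promote this to equality I would invoke conic strong duality via Slater's condition: rewriting $\theta_2$ in standard form with a $\dual{\QuadCone}$-valued slack, the cone is $\Reals \times \dual{\AdjCone} \times \dual{\QuadCone}$, and the affine constraint is met at a point of its relative interior — take $B$ a small point in the relative interior of $\dual{\AdjCone}$ and $\lambda$ large, so that $\lambda I - B - \oprodsym{\sqrt{w}} \succ 0$ and hence lies in $\interior(\Psd{V}) \subseteq \interior(\dual{\QuadCone})$; moreover $\theta_2$ is bounded below, since the constraint forces $\lambda \geq w_i$ for every $i$. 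Hence $\theta_2 = \theta_3$ and the supremum defining $\theta_3$ is attained. The fiddly point — and the place to be careful — is the bookkeeping around the free scalar $\lambda$ and the (possibly non-full-dimensional) polyhedral cone $\dual{\AdjCone}$; polyhedrality of $\AdjCone$ and $\interior(\QuadCone) \neq \emptyset$ have by then already done their work, in the applicability of Proposition~\ref{prop:theta-theta2} and in keeping $\Schurlift{\dual{\QuadCone}}$ well-defined.

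Putting the pieces together yields $\theta = \theta_1 = \theta_2 = \theta_3 = \theta_4$ with the asserted attainments. For $\eqref{eq:thetaA-abl}$, both sides are convex corners with the same support function on $\Reals_+^V$: the left side has support function $\theta(\AdjCone,\QuadCone;\cdot)$ and the right side $\theta_4(\AdjCone,\QuadCone;\cdot)$, which agree by $\eqref{eq:thetaA-equal}$. Since $\abl{\Dcal} = \setst{s \in \Reals_+^V}{\suppf{\Dcal}{s} \leq 1}$ for any convex corner $\Dcal$, the two sides have the same antiblocker, and applying the antiblocker once more (convex corners satisfy $\abl{\abl{\Dcal}} = \Dcal$) gives $\eqref{eq:thetaA-abl}$.
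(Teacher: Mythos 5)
Your proposal is correct and follows essentially the same route as the paper: assemble \(\theta=\theta_1\) from Proposition~\ref{prop:suppf-theta1-abl}, \(\theta=\theta_2\) from Proposition~\ref{prop:theta-theta2} (made applicable by Theorem~\ref{thm:schur-K}), \(\theta_3=\theta_4\) from Theorem~\ref{thm:theta3-theta4}, and close the loop with conic strong duality between \(\theta_2\) and \(\theta_3\) using a restricted Slater point supplied by the polyhedrality of \(\AdjCone\) and \(\dual{\QuadCone}\supseteq\Psd{V}\), after which the antiblocker identity follows by support-function (conjugate) duality. The only cosmetic difference is that your appeal to Proposition~\ref{prop:psd-schur-duality} for \(\theta_4\le\theta\) is redundant once strong duality closes the chain; the paper likewise mentions that step only as an aside.
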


\begin{proof}
  The optimization problems that define~\(\theta_2(\AdjCone,
  \QuadCone; w)\) and~\(\theta_3(\AdjCone, \QuadCone; w)\) form a
  primal-dual pair of conic optimization problems; this follows from
  the polyhedrality of~\(\AdjCone\) and from \(\Image(\Diag) \subseteq
  \AdjCone\). Thus, the equation \(\theta_2(\AdjCone, \QuadCone; w) =
  \theta_3(\AdjCone, \QuadCone; w)\) follows by Conic Programming
  Strong Duality; see, e.g., \cite{BenTalN01a} or
  \cite[Theorem~1.1]{Carli13a}.  Although the conic formulation
  for~\(\theta_3(\AdjCone, \QuadCone; w)\) need not have a Slater
  point, the assumptions that~\(\AdjCone\) is polyhedral and
  \(\dual{\QuadCone} \supseteq \Psd{V}\) show that the optimization
  problem defining~\(\theta_2(\AdjCone, \QuadCone; w)\) has a
  restricted Slater point.  Equation~\eqref{eq:thetaA-equal} follows
  from \(\theta_2(\AdjCone, \QuadCone; w) = \theta_3(\AdjCone,
  \QuadCone; w)\), Propositions~\ref{prop:suppf-theta1-abl} and
  \ref{prop:theta-theta2}, and Theorem~\ref{thm:theta3-theta4}
  since
  \(
  \cl\paren[\big]{
    \genTH[\big]{
      \thinspace\AdjDual{\AdjCone}
    }{
      \Schurlift{\dual{\QuadCone}}
    }
  }
  \)
  is a convex corner by Theorem~\ref{thm:schur-K}.  Existence of optimal solutions follows from the corresponding statements in the previous results.
  Now~\eqref{eq:thetaA-abl} follows from conjugate duality applied to
  \(\theta(\AdjCone, \QuadCone; w) = \theta_4(\AdjCone, \QuadCone;
  w)\) for every \(w \in \Reals_+^V\).
\end{proof}

Theorem~\ref{thm:all-thetas-equal} implies~\eqref{eq:abl-TH-intro}
and~\eqref{eq:abl-TH'-intro} using the
descriptions~\eqref{eq:THs-as-genTHs} for every graph~\(G\).  Note
also that we could have mimicked the proof of the
chain~\eqref{eq:theta-original-chain} as in~\cite{GroetschelLS93a}
and~\cite{Knuth94a}; the proof that \(\theta_4(\AdjCone, \QuadCone; w)
\leq \theta(\AdjCone, \QuadCone; w)\) follows from
Proposition~\ref{prop:psd-schur-duality}.

In the context of Theorem~\ref{thm:all-thetas-equal}, the support
functions of the two theta bodies that appear in~\eqref{eq:thetaA-abl}
are gauges polar to each other; see~\cite[\S{}15]{Rockafellar97a} and recall the definition of gauge from~\eqref{eq:def-gauge}.
The corresponding polar inequality (that is, the corresponding
Cauchy-Schwarz inequality) for these gauges is stated next; compare
with~\cite[Proposition~8 and Theorem~18]{DukanovicR10a}.  For each
permutation \(\sigma\) on~\(V\), define the linear map
\(\PermMatrix{\sigma} \ffrom \Reals^V \fto \Reals^V\) as the linear
extension of the map \(e_i \in \Reals^V \mapsto e_{\sigma(i)}\).
For each \(L \in \Reals^{V \times V}\), define the \emph{congruence
  map}~\(\CongMap{L} \ffrom \Reals^{V \times V} \fto \Reals^{V \times
  V}\) as
\begin{equation}
  \CongMap{L}(X) \coloneqq L X L^{\transp}
  \qquad
  \forall X \in \Reals^{V \times V}.
\end{equation}
\bgroup
\newcommand*{\Group}{\Gamma}
\begin{corollary}
  \label{cor:theta-polar-ineq}
  Let \(\AdjCone \subseteq \Sym{V}\) be a diagonally scaling-invariant
  polyhedral cone such that \(\Image(\Diag) \subseteq \AdjCone\).  Let
  \(\QuadCone \subseteq \Psd{V}\) be a diagonally scaling-invariant
  closed convex cone such that \(\Diag(\Reals_+^V) \subseteq \QuadCone\) and \(\interior(\QuadCone) \neq \emptyset\).  If
  \(w, \wb \in \Reals_+^V\), then
  \begin{equation}
    \label{eq:theta-polar-gauges}
    \iprod{w}{\wb}
    \leq
    \suppf[\Big]{
      \genTH[\big]{\AdjCone}{\Psdlift{\QuadCone}}
    }{
      w
    }
    \cdot
    \suppf[\Big]{
      \cl\paren[\big]{
        \genTH[\big]{
          \thinspace\AdjDual{\AdjCone}
        }{
          \Schurlift{\dual{\QuadCone}}
        }
      }
    }{
      \wb
    }.
  \end{equation}
  Moreover, if there exists a transitive permutation group~\(\Group\)
  on~\(V\) such that
  \begin{equation}
    \label{eq:theta-polar-ineq-Aut}
    \setst{\CongMap{\PermMatrix{\sigma}}}{\sigma \in \Group}
    \subseteq
    \Aut(\AdjCone) \cap \Aut(\QuadCone),
  \end{equation}
  then
  \begin{equation}
    \label{eq:theta-polar-gauges-sym}
    \card{V}
    =
    \suppf[\Big]{
      \genTH[\big]{
        \AdjCone
      }{
        \Psdlift{\QuadCone}
      }
    }{
      \ones
    }
    \cdot
    \suppf[\Big]{
      \cl\paren[\big]{
        \genTH[\big]{
          \thinspace\AdjDual{\AdjCone}
        }{
          \Schurlift{\dual{\QuadCone}}
        }
      }
    }{
      \ones
    }.
  \end{equation}
\end{corollary}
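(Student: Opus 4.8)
The plan is to read off both claims from the antiblocking identity~\eqref{eq:thetaA-abl} of Theorem~\ref{thm:all-thetas-equal}. Abbreviate $\Ccal \coloneqq \genTH{\AdjCone}{\Psdlift{\QuadCone}}$ and $\Dcal \coloneqq \cl\paren[\big]{\genTH{\thinspace\AdjDual{\AdjCone}}{\Schurlift{\dual{\QuadCone}}}}$; by Corollary~\ref{cor:TH-AK-convex-corner} and Theorem~\ref{thm:schur-K} these are convex corners, and~\eqref{eq:thetaA-abl} gives $\abl{\Dcal} = \Ccal$, hence also $\abl{\Ccal} = \Dcal$ since antiblocking is an involution on convex corners. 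For~\eqref{eq:theta-polar-gauges} I would first dispose of the degenerate cases: if $w = 0$ or $\wb = 0$ the inequality is trivial, and these are the only cases in which $\suppf{\Ccal}{w} = 0$ or $\suppf{\Dcal}{\wb} = 0$, since $\Ccal$ and $\Dcal$ are convex corners and so have interior points with strictly positive coordinates. Otherwise, invoking~\eqref{eq:suppf-gauge}, set $\alpha \coloneqq \suppf{\Ccal}{w} = \gauge{\Dcal}{w} > 0$ and $\beta \coloneqq \suppf{\Dcal}{\wb} = \gauge{\Ccal}{\wb} > 0$; since $\Ccal$ and $\Dcal$ are closed the infima defining these gauges are attained, so $\tfrac{1}{\alpha}w \in \Dcal = \abl{\Ccal}$ and $\tfrac{1}{\beta}\wb \in \Ccal$, and the defining inequality of the antiblocker yields $\iprod{\tfrac{1}{\alpha}w}{\tfrac{1}{\beta}\wb} \le 1$, i.e.\ $\iprod{w}{\wb} \le \alpha\beta$, which is~\eqref{eq:theta-polar-gauges}.

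For~\eqref{eq:theta-polar-gauges-sym}, one direction ($\card{V} \le \suppf{\Ccal}{\ones}\cdot\suppf{\Dcal}{\ones}$) is the case $w = \wb = \ones$ of~\eqref{eq:theta-polar-gauges}. For the reverse the idea is to symmetrize optimizers over $\Group$. The key point is that $\Ccal$ is invariant under $\PermMatrix{\sigma}$ for each $\sigma \in \Group$: extending $\sigma$ to fix~$0$ and setting $Q_\sigma \coloneqq \PermMatrix{\sigma} \oplus 1$, the congruence $\CongMap{Q_\sigma}$ is a permutation congruence on $\Sym{\setlift{V}}$, hence preserves $\Psd{\setlift{V}}$ and the affine constraints $\Xh_{00} = 1$ and $\Xh e_0 = \diag(\Xh)$; since the principal submatrix of $\CongMap{Q_\sigma}(\Xh)$ indexed by $V$ is $\CongMap{\PermMatrix{\sigma}}(\Xh[V])$, hypothesis~\eqref{eq:theta-polar-ineq-Aut} shows that the constraints $\Xh[V] \in \AdjCone$ and $\Xh \in \Psdlift{\QuadCone}$ (the latter being $\Xh \succeq 0$ together with $\Xh[V] \in \QuadCone$) are preserved too. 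Thus $\CongMap{Q_\sigma}$ is a bijection of $\liftedGenTH{\AdjCone}{\Psdlift{\QuadCone}}$, and projecting by $\diag(\cdot[V])$ — using $\diag\paren[\big]{\CongMap{\PermMatrix{\sigma}}(X)} = \PermMatrix{\sigma}\diag(X)$ — gives $\PermMatrix{\sigma}\Ccal = \Ccal$. As each $\PermMatrix{\sigma}$ is orthogonal, it follows that $\Dcal = \abl{\Ccal}$ is $\PermMatrix{\sigma}$-invariant as well. Finally, choose $x^* \in \Ccal$ and $y^* \in \Dcal$ attaining $\suppf{\Ccal}{\ones}$ and $\suppf{\Dcal}{\ones}$ (they exist by compactness of $\Ccal$ and $\Dcal$); averaging over $\Group$ and using transitivity, $\tfrac{1}{\card{\Group}}\sum_{\sigma \in \Group}\PermMatrix{\sigma}x^* = \tfrac{1}{\card{V}}\iprod{\ones}{x^*}\ones$ lies in $\Ccal$, and likewise $\tfrac{1}{\card{V}}\iprod{\ones}{y^*}\ones$ lies in $\Dcal = \abl{\Ccal}$, so the antiblocker inequality for this pair gives $\tfrac{1}{\card{V}}\suppf{\Ccal}{\ones}\cdot\suppf{\Dcal}{\ones} \le 1$, which together with the first direction proves~\eqref{eq:theta-polar-gauges-sym}.

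The one step needing genuine care is the verification that $\CongMap{Q_\sigma}$ preserves all the defining data of $\liftedGenTH{\AdjCone}{\Psdlift{\QuadCone}}$ — short but slightly fiddly, and exactly the place where hypothesis~\eqref{eq:theta-polar-ineq-Aut} is used. Everything else reduces to the involutivity of antiblocking on convex corners (already available from Theorem~\ref{thm:all-thetas-equal}) and the elementary observation that averaging a point of a $\Group$-invariant convex set over a transitive permutation group $\Group$ produces a scalar multiple of $\ones$.
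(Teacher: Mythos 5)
Your proof is correct and follows essentially the same route as the paper's: the inequality~\eqref{eq:theta-polar-gauges} is the polar-gauge (Cauchy--Schwarz) inequality read off from the antiblocking identity~\eqref{eq:thetaA-abl}, and the equality~\eqref{eq:theta-polar-gauges-sym} comes from averaging optimizers over the transitive group. The only (harmless) deviation is that you verify $\Group$-invariance directly only for $\genTH{\AdjCone}{\Psdlift{\QuadCone}}$ and transfer it to the Schur-lifted body via $\Dcal = \abl{\Ccal}$ and orthogonality of permutation matrices, whereas the paper checks both lifted cones explicitly; both are fine.
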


\begin{proof}
  \newcommand*{\liftedGroup}{\widehat{\Group}}
  By Theorem~\ref{thm:schur-K}, we know that
  \(
  \cl\paren[\big]{
    \genTH[\big]{
      \thinspace\AdjDual{\AdjCone}
    }{
      \Schurlift{\dual{\QuadCone}}
    }
  }
  \)
  is a convex corner.  By~\eqref{eq:suppf-gauge} and
  Theorem~\ref{thm:all-thetas-equal}, the gauge function
  \(
  \gauge[\big]{
    \cl\paren[\big]{
      \genTH[\big]{
        \thinspace\AdjDual{\AdjCone}
      }{
        \Schurlift{\dual{\QuadCone}}
      }
    }
  }{
    \cdot
  }
  \)
  is the support function
  \(
  \suppf[\big]{
    \genTH[\big]{\AdjCone}{\Psdlift{\QuadCone}}
  }{
    \cdot
  }
  \).
  Hence, the support functions
  \(
  \suppf[\big]{
    \cl\paren[\big]{
      \genTH[\big]{
        \thinspace\AdjDual{\AdjCone}
      }{
        \Schurlift{\dual{\QuadCone}}
      }
    }
  }{
    \cdot
  }
  \)
  and
  \(
  \suppf[\big]{
    \genTH[\big]{
      \AdjCone
    }{
      \Psdlift{\QuadCone}
    }
  }{
    \cdot
  }
  \)
  are gauges polar to each other (when restricted to~\(\Reals_+^V\));
  see~\cite[Corollary~15.1.2]{Rockafellar97a}.
  Now~\eqref{eq:theta-polar-gauges} follows immediately.

  Next, we prove that~`\(\geq\)' holds
  in~\eqref{eq:theta-polar-gauges-sym} if \(w = \wb = \ones\)
  and~\eqref{eq:theta-polar-ineq-Aut} holds.  Assume the latter, and
  let \(\liftedGroup\) denote the permutation group
  \(
  \liftedGroup
  \coloneqq
  \setst[\big]{
    \hat{\sigma}
  }{
    \hat{\sigma}(0) = 0,\,
    \hat{\sigma}\restriction_{V} \in \Group
  }
  \)
  on \(\setlift{V}\).  It is clear that
  \(\setst[\big]{\CongMap{\PermMatrix{\hat{\sigma}}}}{\hat{\sigma} \in
    \liftedGroup} \subseteq \Aut\paren[\big]{\liftedQuadCone}\) for
  each \(\liftedQuadCone \in \set[\big]{\Psdlift{\QuadCone},
    \Schurlift{\dual{\QuadCone}}}\).  Together with
  \(\setst{\CongMap{\PermMatrix{\sigma}}}{\sigma \in \Group} \subseteq
  \Aut(\AdjCone)\), this yields
  \(\setst[\big]{\CongMap{\PermMatrix{\hat{\sigma}}}}{\hat{\sigma} \in
    \liftedGroup} \subseteq \Aut\paren[\big]{\liftedThetaBodyA}\) for
  each \(\thinspace\liftedThetaBodyA \in
  \set[\big]{\liftedGenTH[\big]{\AdjCone}{\Psdlift{\QuadCone}},
    \liftedGenTH[\big]{\,\AdjDual{\AdjCone}}{\Schurlift{\dual{\QuadCone}}}}\),
  whence
  \begin{equation*}
    \setst[\big]{\PermMatrix{\sigma}}{\sigma \in \Group}
    \subseteq
    \Aut\paren[\big]{\ThetaBodyA}
    \qquad
    \forall\thinspace
    \ThetaBodyA \in
    \set[\big]{
      \genTH[\big]{\AdjCone}{\Psdlift{\QuadCone}},
      \genTH[\big]{
        \thinspace\AdjDual{\AdjCone}
      }{
        \Schurlift{\dual{\QuadCone}}
      }
    }.
  \end{equation*}
  Thus, each support function on the RHS
  of~\eqref{eq:theta-polar-gauges-sym} is attained by a fixed point of
  the Reynolds operator
  \begin{equation*}
    x \in \Reals^V
    \mapsto
    \frac{1}{\card{\Group}}
    \sum_{\sigma \in \Group}
    \PermMatrix{\sigma} x.
  \end{equation*}
  Since \(\Group\) acts transitively on~\(V\), there exist \(\mu, \nu
  \in \Reals\) such that \(\mu \ones\) attains
  \(\suppf[\big]{\genTH[\big]{\AdjCone}{\Psdlift{\QuadCone}}}{\ones}\)
  and \(\nu \ones\) attains
  \(
  \suppf[\big]{
    \cl\paren[\big]{
      \genTH[\big]{
        \thinspace\AdjDual{\AdjCone}
      }{
        \Schurlift{\dual{\QuadCone}}
      }
    }
  }{
    \ones
  }
  \).
  By~\eqref{eq:thetaA-abl} from Theorem~\ref{thm:all-thetas-equal}, we
  get \(\iprod{\mu \ones}{\nu \ones} \leq 1\) so \(\mu \nu \card{V} \leq 1\).
  Thus,
  \begin{equation*}
    \suppf[\Big]{
      \genTH[\big]{\AdjCone}{\Psdlift{\QuadCone}}
    }{
      \ones
    }
    \cdot
    \suppf[\Big]{
      \cl\paren[\big]{
        \genTH[\big]{
          \thinspace\AdjDual{\AdjCone}
        }{
          \Schurlift{\dual{\QuadCone}}
        }
      }
    }{
      \ones
    }
    =
    \iprod{\ones}{\mu \ones}
    \iprod{\ones}{\nu \ones}
    =
    \mu \nu \card{V}^2
    \leq
    \card{V}.\qedhere
  \end{equation*}
\end{proof}
\egroup

\section{Theta Bodies over the Copositive and Completely Positive Cones}
\label{sec:TH-Cop}

In this section, we show that the stable set polytope of a graph and
one of its classical fractional relaxations are theta bodies.  The key
result we use to prove this is a completely positive formulation for
the stability number of a graph, due to de Klerk and
Pasechnik~\cite{KlerkP02a}.  As a consequence of the antiblocker
duality relation from Theorem~\ref{thm:all-thetas-equal}, we shall
derive a weighted generalization of a copositive formulation for the
fractional chromatic number of a graph, due to Dukanovic and
Rendl~\cite{DukanovicR10a}.

Let \(G = (V,E)\) be a graph.  For each \(w \in \Reals_+^V\), we set
\begin{equation}
  \label{eq:stab-weighted-def}
  \stab(G;w)
  \coloneqq
  \suppf{\STAB(G)}{w}.
\end{equation}
Recall that the stable set polytope~\(\STAB(G)\) was defined as the
convex hull of \(\setst{\incidvector{S}}{S \subseteq V \text{ stable
    in }G}\), that \(\Copositive{V}\) denotes the cone of
copositive matrices, and that \(\ComplPositive{V}\) is the cone of
completely positive matrices.  The key argument in the proof of the next result
comes from~\cite[Theorem~2.2]{KlerkP02a}:
\begin{proposition}
  \label{prop:stab-as-TH-AK}
  If \(G = (V,E)\) is a graph, then
  \begin{equation}
    \label{eq:stab-as-TH-AK}
    \genTH{
      \PolyAdjCone{E}{E}
    }{
      \Psdlift{\ComplPositive{V}}
    }
    =
    \STAB(G).
  \end{equation}
\end{proposition}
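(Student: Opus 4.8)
The plan is to establish the two inclusions of~\eqref{eq:stab-as-TH-AK} separately; the inclusion ``$\supseteq$'' is elementary, while ``$\subseteq$'' is the substantive one and rests on the completely positive certificate behind \cite[Theorem~2.2]{KlerkP02a}. First I would record that $\genTH{\PolyAdjCone{E}{E}}{\Psdlift{\ComplPositive{V}}}$ is a convex corner by checking the hypotheses of Corollary~\ref{cor:TH-AK-convex-corner} for $\AdjCone \coloneqq \PolyAdjCone{E}{E}$ and $\liftedQuadCone \coloneqq \Psdlift{\ComplPositive{V}}$: we have $\Image(\Diag) \subseteq \PolyAdjCone{E}{E}$; each $\oprodsym{(e_0+e_i)}$ lies in $\Psdlift{\ComplPositive{V}}$ since it is positive semidefinite and its principal submatrix indexed by~$V$ is $\oprodsym{e_i} \in \ComplPositive{V}$; and $\Psdlift{\ComplPositive{V}} \subseteq \Psd{\setlift{V}}$, so~\eqref{eq:lifted-K-not-too-big-suff} holds. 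For ``$\supseteq$'', if $S$ is a stable set of~$G$, then $\oprodsym{\incidvector{S}} \in \ComplPositive{V}$ and, since $S$ is stable, also $\oprodsym{\incidvector{S}} \in \PolyAdjCone{E}{E}$; hence the rank-one matrix $\onelift{\incidvector{S}}{\oprodsym{\incidvector{S}}}$ belongs to $\liftedGenTH{\PolyAdjCone{E}{E}}{\Psdlift{\ComplPositive{V}}}$, so $\incidvector{S} \in \genTH{\PolyAdjCone{E}{E}}{\Psdlift{\ComplPositive{V}}}$, and convexity of the latter yields $\STAB(G) \subseteq \genTH{\PolyAdjCone{E}{E}}{\Psdlift{\ComplPositive{V}}}$.

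For ``$\subseteq$'' I would pass to support functions. The cone $\ComplPositive{V}$ is a diagonally scaling-invariant closed convex cone contained in $\Psd{V}$ with $\Diag(\Reals_+^V) \subseteq \ComplPositive{V}$, and $\PolyAdjCone{E}{E}$ satisfies $\Image(\Diag) \subseteq \PolyAdjCone{E}{E}$, so Theorem~\ref{thm:theta3-theta4} applies and gives, for each $w \in \Reals_+^V$,
\[
  \suppf{\genTH{\PolyAdjCone{E}{E}}{\Psdlift{\ComplPositive{V}}}}{w}
  =
  \max\setst[\big]{\qform{X}{\sqrt{w}}}{\trace(X) = 1,\ X \in \PolyAdjCone{E}{E},\ X \in \ComplPositive{V}}.
\]
Let $X$ attain this maximum and write $X = \sum_{k} \oprodsym{b_k}$ with $b_k \in \Reals_+^V$, which is possible since $\ComplPositive{V}$ is the conic hull of the rank-one matrices $\oprodsym{b}$, $b \in \Reals_+^V$. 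Because $X \in \PolyAdjCone{E}{E}$ means $X_{ij} = 0$ for every $ij \in E$ while each summand $b_k(i)b_k(j)$ is nonnegative, we get $b_k(i)b_k(j) = 0$ for all $k$ and all $ij \in E$; that is, $\supp(b_k)$ is a stable set of~$G$ for every~$k$. By Cauchy--Schwarz, $\qform{\oprodsym{b_k}}{\sqrt{w}} = \paren[\big]{\textstyle\sum_{i} b_k(i)\sqrt{w_i}}^2 \leq \norm{b_k}^2 \sum_{i \in \supp(b_k)} w_i \leq \norm{b_k}^2\, \stab(G;w)$, and summing over~$k$ while using $\sum_k \norm{b_k}^2 = \trace(X) = 1$ gives $\qform{X}{\sqrt{w}} \leq \stab(G;w) = \suppf{\STAB(G)}{w}$. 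Hence $\suppf{\genTH{\PolyAdjCone{E}{E}}{\Psdlift{\ComplPositive{V}}}}{w} \leq \suppf{\STAB(G)}{w}$ for all $w \in \Reals_+^V$.

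To finish, I would combine the two pieces via antiblocking duality. Both $\genTH{\PolyAdjCone{E}{E}}{\Psdlift{\ComplPositive{V}}}$ (just shown to be a convex corner) and $\STAB(G)$ are convex corners, and for a convex corner~$\ConvexSet$ one has $\abl{\ConvexSet} = \setst{s \in \Reals_+^V}{\suppf{\ConvexSet}{s} \leq 1}$ directly from the definition of the antiblocker. The support-function inequality therefore gives $\abl{\STAB(G)} \subseteq \abl{\genTH{\PolyAdjCone{E}{E}}{\Psdlift{\ComplPositive{V}}}}$, while the inclusion $\STAB(G) \subseteq \genTH{\PolyAdjCone{E}{E}}{\Psdlift{\ComplPositive{V}}}$ proved above gives the reverse inclusion of the antiblockers; hence the two antiblockers coincide, and applying $\abl{\abl{\ConvexSet}} = \ConvexSet$ for convex corners~$\ConvexSet$ yields~\eqref{eq:stab-as-TH-AK}. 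I expect the only non-routine step to be the ``$\subseteq$'' direction, and within it the heart of the matter is the completely positive decomposition together with the observation that the edge constraints $X_{ij} = 0$ force each rank-one factor $b_k$ to be supported on a stable set; this is exactly where~\cite{KlerkP02a} enters.
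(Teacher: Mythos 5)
Your proof is correct and follows essentially the same route as the paper: the easy inclusion via rank-one lifts of incidence vectors of stable sets, and the reverse inclusion by reducing to a support-function inequality through Theorem~\ref{thm:theta3-theta4} and then exploiting the completely positive structure of feasible matrices. The only (minor, harmless) variation is that you bound $\qform{X}{\sqrt{w}}$ termwise over an arbitrary completely positive decomposition $X = \sum_k \oprodsym{b_k}$, whereas the paper first passes to an extreme ray of $\ComplPositive{V} \cap \PolyAdjCone{E}{E}$ and then optimizes over vectors with that support; both arguments rest on the same observation that the edge constraints force each $\supp(b_k)$ to be stable, followed by Cauchy--Schwarz.
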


\begin{proof}
  To prove `\(\supseteq\)', note that, if \(S \subseteq V\) is a
  stable set of~\(G\), then \(\oprodsym{\paren{1 \oplus
      \incidvector{S}}} \in
  \liftedGenTH[\big]{\PolyAdjCone{E}{E}}{\Psdlift{\ComplPositive{V}}}\),
  whence \(\incidvector{S} \in
  \genTH[\big]{\PolyAdjCone{E}{E}}{\Psdlift{\ComplPositive{V}}}\).
  For the reverse inclusion it suffices by conjugate duality and
  Corollary~\ref{cor:TH-AK-convex-corner} to show that, for \(w \in
  \Reals_+^V\), we have
  \(
  \stab(G; w)
  \geq
  \suppf[\big]{
    \genTH[\big]{
      \PolyAdjCone{E}{E}
    }{
      \Psdlift{\ComplPositive{V}}
    }
  }{
    w
  }.
  \)
  Thus, it suffices by Theorem~\ref{thm:theta3-theta4} to show
  that, for \(w \in \Reals_+^V\), we have
  \begin{equation}
    \label{eq:stab-as-TH-AK-aux1}
    \stab(G; w)
    \geq
    \max \setst*{
      \qform{X}{\sqrt{w}}
    }{
      \trace(X) = 1,\,
      X \in \ComplPositive{V},\,
      X \in \PolyAdjCone{E}{E}
    }.
  \end{equation}
  Let \(w \in \Reals_+^V\).  We may assume that \(w \neq
  0\).  The extreme rays of the cone \(\ComplPositive{V} \cap
  \PolyAdjCone{E}{E}\) are of the form \(\oprodsym{x}\) with
  \(x \in \Reals_+^V\) and \(\supp(x)\) stable in~\(G\).  So there
  exists an optimal solution for the RHS
  of~\eqref{eq:stab-as-TH-AK-aux1} of the form \(\oprodsym{\xb}\) for
  some \(\xb \in \Reals_+^V\) such that \(\norm{\xb}^2 =
  \trace(\oprodsym{\xb}) = 1\) and \(\supp(\xb)\) is a stable set
  in~\(G\).  In fact, for any \(y \in \Reals_+^V\) such that
  \(\norm{y}^2 = 1\) and \(\supp(y) \subseteq \supp(\xb)\), the point
  \(\oprodsym{y}\) is feasible in the RHS
  of~\eqref{eq:stab-as-TH-AK-aux1} with objective value
  \(\iprod{\sqrt{w}}{y}^2\) whence the RHS
  of~\eqref{eq:stab-as-TH-AK-aux1} is equal to \(\max
  \setst[\big]{\iprod{\sqrt{w}}{y}^2}{y \in \Reals_+^V,\, \norm{y}^2 =
    1,\, \supp(y) \subseteq \supp(\xb)}\).  The optimality conditions
  for this optimization problem (i.e., Cauchy-Schwarz) show that an
  optimal solution is given by \(\yb \coloneqq
  \frac{\sqrt{u}}{\norm{\sqrt{u}}}\) where \(u \coloneqq w \hprod
  \incidvector{\supp(\xb)}\), and its objective value is
  \begin{equation*}
    \frac{
      \iprod{
        \sqrt{w}
      }{
        \sqrt{u}\thinspace
      }^2
    }{
      \norm{
        \sqrt{u}\thinspace
      }^2
    }
    =
    \frac{
      \iprod{
        \sqrt{u}
      }{
        \sqrt{u}\thinspace
      }^2
    }{
      \norm{
        \sqrt{u}\thinspace
      }^2
    }
    =
    \norm{
      \sqrt{u}\thinspace
    }^2
    =
    \iprod{w}{\incidvector{\supp(\xb)}}.
  \end{equation*}
  Since \(\supp(\xb)\) is stable, this concludes our proof
  of~\eqref{eq:stab-as-TH-AK-aux1}.
\end{proof}

Let \(G = (V,E)\) be a graph.  The \emph{fractional stable set
  polytope of~\(G\)} is defined as
\begin{equation}
  \QSTAB(G)
  \coloneqq
  \setst{
    x \in \Reals_+^V
  }{
    \iprod{\incidvector{K}}{x} \leq 1
    \text{ for every clique }K\text{ of }G
  }.
\end{equation}
Note that
\begin{equation}
  \QSTAB(G)
  =
  \abl[\big]{
    \STAB(\overline{G})
  }.
\end{equation}
For \(w \in \Reals_+^V\), the \emph{fractional chromatic number
  of~\(G\)} is
\begin{equation}
  \label{eq:frac-chromatic}
  \fracChromatic(G;w)
  \coloneqq
  \suppf[\big]{\QSTAB(\overline{G})}{w}.
\end{equation}

Proposition~\ref{prop:stab-as-TH-AK} yields immediately a weighted
generalization of~\cite[Corollary~5]{DukanovicR10a}:
\begin{corollary}
  Let \(G = (V,E)\) be a graph.  Let \(w \in \Reals_+^V\).  Then
  \begin{equation}
    \label{eq:chi-star-as-theta2}
    \fracChromatic(G; w)
    =
    \min \setst*{
      \lambda
    }{
       Y \in \PolyAdjCone{\overline{E}}{\overline{E}}^{\perp},\,
      \begin{bmatrix}
        1        & \sqrt{w}^{\transp}\thinspace \\
        \sqrt{w} & \lambda I - Y                \\
      \end{bmatrix}
      \in
      \Psdlift{\ComplPositive{V}}
    }.
  \end{equation}
\end{corollary}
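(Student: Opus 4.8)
The plan is to assemble the identity from two earlier results: the completely positive description \(\STAB(G) = \genTH{\PolyAdjCone{E}{E}}{\Psdlift{\ComplPositive{V}}}\) of Proposition~\ref{prop:stab-as-TH-AK}, and the ``\(\theta_2\)-type'' reformulation of antiblocker support functions in Proposition~\ref{prop:theta-theta2}. First I would translate the left-hand side. Since \(\QSTAB(G) = \abl{\STAB(\overline{G})}\) holds for every graph, applying it to \(\overline{G}\) gives \(\QSTAB(\overline{G}) = \abl{\STAB(G)}\); combined with the definition~\eqref{eq:frac-chromatic} this yields \(\fracChromatic(G;w) = \suppf{\abl{\STAB(G)}}{w}\). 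By Proposition~\ref{prop:stab-as-TH-AK}, \(\STAB(G) = \genTH{\PolyAdjCone{E}{E}}{\Psdlift{\ComplPositive{V}}}\), which is a polytope and hence closed, so the closure operator appearing in Proposition~\ref{prop:theta-theta2} will be vacuous here.

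Next I would check the hypotheses of Proposition~\ref{prop:theta-theta2} for \(\AdjCone \coloneqq \PolyAdjCone{E}{E}\) and \(\liftedQuadCone \coloneqq \Psdlift{\ComplPositive{V}}\). Both are diagonally scaling-invariant closed convex cones (\(\ComplPositive{V}\) is one, and PSD liftings inherit diagonal scaling invariance); \(\AdjCone\) is polyhedral and contains \(\Image(\Diag)\), so~\eqref{eq:A-big-enough} holds; \(\liftedQuadCone\) satisfies~\eqref{eq:lifted-K-big-enough} since \(\oprodsym{(e_0+e_i)}\) is positive semidefinite with \(V\)-block \(\oprodsym{e_i} \in \ComplPositive{V}\); and it satisfies~\eqref{eq:lifted-K-not-too-big} because \(\liftedQuadCone \subseteq \Psd{\setlift{V}}\) meets the sufficient condition~\eqref{eq:lifted-K-not-too-big-suff} (cf.\ Corollary~\ref{cor:TH-AK-convex-corner}). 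As \(\AdjCone\) is polyhedral, formula~\eqref{eq:theta2-polyhedral} applies with \(\liftedQuadCone = \Psdlift{\ComplPositive{V}}\), and since \(\STAB(G)\) is closed the attainment clause of Proposition~\ref{prop:theta-theta2} turns the infimum into a minimum. This gives
\[
  \fracChromatic(G;w)
  =
  \suppf{\abl{\STAB(G)}}{w}
  =
  \min\setst*{
    \lambda \in \Reals_+
  }{
    Y \in -\PolyAdjCone{E}{E} \cap \Null(\diag),\,
    \begin{bmatrix}
      1        & \sqrt{w}^{\transp}\thinspace \\
      \sqrt{w} & \lambda I - Y
    \end{bmatrix}
    \in
    \Psdlift{\ComplPositive{V}}
  }.
\]

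Finally I would identify the feasible set of \(Y\) with \(\PolyAdjCone{\overline{E}}{\overline{E}}^{\perp}\). Because \(\PolyAdjCone{E}{E} = \setst{X \in \Sym{V}}{X_{ij} = 0\ \forall ij \in E}\) is a linear subspace, \(-\PolyAdjCone{E}{E} = \PolyAdjCone{E}{E}\), and intersecting with \(\Null(\diag)\) leaves exactly the symmetric matrices vanishing on the diagonal and on all \(E\)-entries, i.e.\ supported on \(\overline{E}\); that subspace is precisely the orthogonal complement in \(\Sym{V}\) of \(\PolyAdjCone{\overline{E}}{\overline{E}} = \setst{X \in \Sym{V}}{X_{ij} = 0\ \forall ij \in \overline{E}}\), so \(-\PolyAdjCone{E}{E} \cap \Null(\diag) = \PolyAdjCone{\overline{E}}{\overline{E}}^{\perp}\). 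The restriction \(\lambda \in \Reals_+\) is redundant, since membership in \(\Psdlift{\ComplPositive{V}} \subseteq \Psd{\setlift{V}}\) forces \(\lambda I - Y \succeq 0\) and hence, as \(\diag(Y) = 0\), \(\lambda \geq 0\); this yields the stated identity. I do not expect a genuine obstacle here: all the substance already lives in Propositions~\ref{prop:stab-as-TH-AK} and~\ref{prop:theta-theta2}, and the only points requiring attention are the routine verification of~\eqref{eq:TH-AK-hypotheses-K} for \(\Psdlift{\ComplPositive{V}}\) and keeping track of the \(G \leftrightarrow \overline{G}\) swap induced by the antiblocker.
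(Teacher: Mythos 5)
Your proposal is correct and follows essentially the same route as the paper: rewrite \(\fracChromatic(G;w)\) as \(\suppf{\abl{\STAB(G)}}{w}\), invoke Proposition~\ref{prop:stab-as-TH-AK} and formula~\eqref{eq:theta2-polyhedral} of Proposition~\ref{prop:theta-theta2}, and identify \(-\PolyAdjCone{E}{E} \cap \Null(\diag)\) with \(\PolyAdjCone{\overline{E}}{\overline{E}}^{\perp}\). The only cosmetic difference is how you drop \(\lambda \in \Reals_+\) (via positive semidefiniteness of the lifted matrix rather than \(\diag(\ComplPositive{V}) \subseteq \Reals_+^V\)), and your explicit verification of the hypotheses of Proposition~\ref{prop:theta-theta2} is a welcome addition the paper leaves implicit.
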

\begin{proof}
  By Proposition~\ref{prop:stab-as-TH-AK}
  and~\eqref{eq:theta2-polyhedral} from
  Proposition~\ref{prop:theta-theta2}, we have
  \begin{equation*}
    \begin{split}
      \fracChromatic(G; w)
      & =
      \suppf{
        \QSTAB(\overline{G})
      }{
        w
      }
      =
      \suppf{
        \abl{\STAB(G)}
      }{
        w
      }
      =
      \suppf{
        \abl{
          \genTH{
            \PolyAdjCone{E}{E}
          }{
            \Psdlift{\ComplPositive{V}}
          }
        }
      }{
        w
      }
      \\
      & =
      \min \setst*{
        \lambda \in \Reals_+
      }{
        Y \in -\PolyAdjCone{E}{E} \cap \Null(\diag),\,
        \begin{bmatrix}
          1        & \sqrt{w}^{\transp}\thinspace \\
          \sqrt{w} & \lambda I - Y                \\
        \end{bmatrix}
        \in
        \Psdlift{\ComplPositive{V}}
      }
      \\
      & =
      \min \setst*{
        \lambda
      }{
        Y \in \PolyAdjCone{\overline{E}}{\overline{E}}^{\perp},\,
        \begin{bmatrix}
          1        & \sqrt{w}^{\transp}\thinspace \\
          \sqrt{w} & \lambda I - Y                \\
        \end{bmatrix}
        \in
        \Psdlift{\ComplPositive{V}}
      }.
    \end{split}
  \end{equation*}
  The constraint \(\lambda \in \Reals_+\) may be dropped since
  \(\diag(\ComplPositive{V}) \subseteq \Reals_+^V\).
\end{proof}

By the antiblocker relation from Theorem~\ref{thm:all-thetas-equal},
we know that~\(\QSTAB(G)\) is the closure of a theta body.  Unlike in
the cases presented so far, the fact that the latter theta body is
actually closed does not follow from our previous results.  Thus, we
proceed to prove its closedness separately.  We shall use an argument
from~\cite[Theorem~5]{GibbonsHPR97a} (more specifically, in the proof
of~\eqref{eq:copositive-bounded-lift-aux2.66} below).  We denote the maximum norm by \(\pnorm{\cdot}{\infty}\).
\begin{theorem}
  \label{thm-copositive-bounded-lift}
  Let \(\AdjCone \subseteq \Sym{V}\) be a diagonally scaling-invariant
  polyhedral cone such that \(\Image(\Diag) \subseteq \AdjCone\).  Then
  \begin{equation}
    \label{eq:6}
    \genTH{
      \AdjCone
    }{
      \Schurlift{\Copositive{V}}
    }
    =
    \setst*{
      \diag(\Xh[V])
    }{
      \Xh \in
      \liftedGenTH{
        \AdjCone
      }{
        \Schurlift{\Copositive{V}}
      },\,
      \pnorm{\Xh}{\infty} \leq 1
    }.
  \end{equation}
  Consequently, \(\genTH{\AdjCone}{\Schurlift{\Copositive{V}}}\) is a
  convex corner contained in \([0,1]^V\).
\end{theorem}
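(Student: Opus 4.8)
The plan is to first put $\Schurlift{\Copositive{V}}$ inside the framework of Section~\ref{sec:schur-liftings}.  Since $\Copositive{V}\supseteq\Psd{V}$ and $\diag(\Copositive{V})\subseteq\Reals_+^V$, Theorem~\ref{thm:schur-K} applies: $\Schurlift{\Copositive{V}}$ is a diagonally scaling-invariant closed convex cone satisfying~\eqref{eq:TH-AK-hypotheses-K}, and (using also $\Image(\Diag)\subseteq\AdjCone$) $\cl\paren[\big]{\genTH{\AdjCone}{\Schurlift{\Copositive{V}}}}$ is a convex corner contained in $[0,1]^V$ by Proposition~\ref{prop:TH-AK-almost-convex-corner}.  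So the only thing still missing for the ``consequently'' is that $\genTH{\AdjCone}{\Schurlift{\Copositive{V}}}$ is \emph{closed}, and this is exactly what~\eqref{eq:6} buys us: once~\eqref{eq:6} is established, its right-hand side realizes $\genTH{\AdjCone}{\Schurlift{\Copositive{V}}}$ as the image of $\setst*{\Xh\in\liftedGenTH{\AdjCone}{\Schurlift{\Copositive{V}}}}{\pnorm{\Xh}{\infty}\le 1}$ under the continuous linear map $\Xh\mapsto\diag(\Xh[V])$; that set is bounded and closed (the latter from closedness of $\Schurlift{\Copositive{V}}$ and of $\AdjCone$), hence compact, so $\genTH{\AdjCone}{\Schurlift{\Copositive{V}}}$ is compact and in particular closed.

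It remains to prove~\eqref{eq:6}; ``$\supseteq$'' is immediate, so let $x$ belong to the left-hand side and choose $\Xh=\onelift{x}{X}\in\liftedGenTH{\AdjCone}{\Schurlift{\Copositive{V}}}$, so that $\diag(X)=x$, $X\in\AdjCone$, and $N\coloneqq X-\oprodsym{x}\in\Copositive{V}$; then $N_{ii}=x_i-x_i^2\ge 0$ forces $x\in[0,1]^V$, and I put $\nu\coloneqq\diag(N)=x\hprod(\ones-x)\ge 0$.  The idea is to replace $X$ by $X'\coloneqq\oprodsym{x}+N'$, where $N'$ \emph{truncates the off-diagonal of} $N$: $N'_{ij}\coloneqq\min\paren*{N_{ij},\sqrt{\nu_i\nu_j}}$ for $i\ne j$ and $N'_{ii}\coloneqq\nu_i$.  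Then $\diag(X')=x\hprod x+\nu=x$, $X'-\oprodsym{x}=N'$, $X'=\oprodsym{x}+N'\in\Copositive{V}$ (granting $N'\in\Copositive{V}$, since $\oprodsym{x}\in\Psd{V}\subseteq\Copositive{V}$), and $X'_{ij}=\min\paren*{X_{ij},\,x_ix_j+\sqrt{\nu_i\nu_j}}$ for $i\ne j$.  The Cauchy--Schwarz inequality $\sqrt{x_ix_j}+\sqrt{(1-x_i)(1-x_j)}\le 1$ yields $x_ix_j+\sqrt{\nu_i\nu_j}=\sqrt{x_ix_j}\paren[\big]{\sqrt{x_ix_j}+\sqrt{(1-x_i)(1-x_j)}}\le 1$, so every off-diagonal entry of $X'$ is $\le 1$; and once $N'\in\Copositive{V}$, its $2\times 2$ principal submatrices give $N'_{ij}\ge-\sqrt{\nu_i\nu_j}\ge-\tfrac14$, whence $X'_{ij}\ge-\tfrac14$.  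Moreover $X'_{ij}\le X_{ij}$ for all $i\ne j$, and, by Proposition~\ref{prop:A-Diag-closed-polyhedral} together with $\Image(\Diag)\subseteq\AdjCone$, $\AdjCone=\PolyAdjCone{E^+}{E^-}$ for some $E^+,E^-$; thus $X'\in\AdjCone$, because for $ij\in E^+$ both entries inside the $\min$ defining $X'_{ij}$ are nonnegative, while for $ij\in E^-$ we have $X'_{ij}\le X_{ij}\le 0$.  Combining this with $\Xh'_{00}=1$ and $\diag(X')=x\in[0,1]^V$ gives $\pnorm{\Xh'}{\infty}\le 1$ for $\Xh'\coloneqq\onelift{x}{X'}$, and $\Xh'\in\liftedGenTH{\AdjCone}{\Schurlift{\Copositive{V}}}$ once $N'$ is copositive, so $x$ lies in the right-hand side of~\eqref{eq:6}.

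Everything thus comes down to the \emph{truncation lemma}: if $N\in\Copositive{V}$ has $\diag(N)=\nu\ge 0$, then $N'$ (as above) is copositive; this is where I would adapt the argument of~\cite[Theorem~5]{GibbonsHPR97a}.  First, if $\nu_i=0$ for some $i$, then copositivity of $N$ forces $N e_i\ge 0$, so $N'$ has its $i$th row and column equal to $0$; since $N'[V\drop\set{i}]$ is the analogous truncation of the copositive matrix $N[V\drop\set{i}]$, induction on $\card{\setst{i}{\nu_i=0}}$ reduces us to the case $\nu>0$.  Conjugating by $\Diag(\sqrt{\nu})^{-1}$ then reduces the claim to: if $M\in\Copositive{V}$ has unit diagonal, capping every off-diagonal entry of $M$ at $1$ keeps it copositive.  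Since this capping modifies one entry at a time and never raises an entry, it suffices to show that if $M\in\Copositive{V}$ has unit diagonal and $M_{ij}>1$ for distinct $i,j$, then lowering $M_{ij}=M_{ji}$ to $1$ preserves copositivity; call the result $M^{(1)}$.  For this, fix $h\in\Reals_+^V$: if $h_ih_j=0$ then $\qform{M^{(1)}}{h}=\qform{M}{h}\ge 0$, so assume $h_i,h_j>0$ and consider the nonnegative vectors $u\coloneqq h+h_j(e_i-e_j)$ and $v\coloneqq h+h_i(e_j-e_i)$, which move all the weight of one of the two coordinates onto the other.  A short computation using $\diag(M)=\ones$ gives $\qform{M}{u}=\qform{M}{h}-2h_ih_j(M_{ij}-1)+2h_j\sigma$ and $\qform{M}{v}=\qform{M}{h}-2h_ih_j(M_{ij}-1)-2h_i\sigma$, with $\sigma\coloneqq\sum_{k\ne i,j}(M_{ik}-M_{jk})h_k$; taking whichever of $u,v$ makes the last term nonnegative and using $\qform{M}{\cdot}\ge 0$ on $\Reals_+^V$ gives $\qform{M}{h}\ge 2h_ih_j(M_{ij}-1)$, i.e.\ $\qform{M^{(1)}}{h}\ge 0$.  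The main obstacle is precisely this lemma---getting the mass-shifting identities and the sign case-split right---while the surrounding reductions and the bookkeeping for $X'\in\AdjCone$ and $\pnorm{\Xh'}{\infty}\le 1$ are routine.
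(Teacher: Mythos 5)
Your proof is correct, but it takes a genuinely different route from the paper's. The paper first normalizes the witness (reducing to \(Y \in \Symnonneg{V}\) with full support) and then \emph{discards it entirely}, building a brand-new lift \(X = \DiagScaleMap{x}(D^{-1}+A)\) whose off-diagonal entries are \((x_i+x_j)/2\) on the support of the positive part of \(Y - \Diag(x)\) and \(0\) elsewhere; the copositivity of \(D^{-1}+A-\oprodsym{\ones}\) is certified by the Gibbons--Hearn--Pardalos--Ramana argument that the quadratic program \(\min\qform{(D^{-1}+A)}{h}\) over the simplex admits an optimizer with stable support. You instead \emph{keep} the witness and truncate its off-diagonal entries, isolating a clean standalone lemma: capping the off-diagonal entries of a copositive matrix \(N\) at \(\sqrt{\nu_i\nu_j}\) (equivalently, after scaling, capping a unit-diagonal copositive matrix at \(1\)) preserves copositivity. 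Your mass-shifting proof of that lemma is the same underlying trick as the paper's (transferring weight between two coordinates along a direction that does not increase the quadratic), but packaged entrywise rather than as a support-reduction argument; it buys you a witness with entries in \([-\tfrac14,1]\) rather than \([0,1]\), and it lets you skip the paper's preliminary reductions to \(Y \geq 0\) and to full support. The surrounding bookkeeping (\(X'\in\AdjCone\) via Proposition~\ref{prop:A-Diag-closed-polyhedral}, the Cauchy--Schwarz bound \(x_ix_j+\sqrt{\nu_i\nu_j}\le 1\), and the deduction of closedness from~\eqref{eq:6}) all checks out. One microscopic slip: in the capping lemma you should take whichever of \(u,v\) makes the extra term (\(2h_j\sigma\) resp.\ \(-2h_i\sigma\)) \emph{nonpositive}, so that \(0\le\qform{M}{u}\) (or \(\qform{M}{v}\)) \(\le \qform{M}{h}-2h_ih_j(M_{ij}-1)\); one of the two choices always works since the two extra terms have opposite signs.
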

\begin{proof}
  The inclusion~`\(\supseteq\)' in~\eqref{eq:6} is trivial.  For the
  reverse inclusion, let \(x \in
  \genTH{\AdjCone}{\Schurlift{\Copositive{V}}}\), and let \(\Yh \in
  \liftedGenTH{\AdjCone}{\Schurlift{\Copositive{V}}}\) such that \(x =
  \diag(Y)\) for \(Y \coloneqq \Yh[V]\).  We shall
  use~\eqref{eq:Schur-simplify} with \(\QuadCone = \Copositive{V}\)
  throughout the proof without further mention.  Note that \(Y -
  \oprodsym{x} \in \Copositive{V}\) implies that \(x - (x \hprod x) =
  \diag(Y - \oprodsym{x}) \geq 0\) so
  \begin{equation}
    \label{eq:copositive-bounded-lift-aux1}
    x \in [0,1]^V.
  \end{equation}
  Let us prove that
  \begin{equation}
    \label{eq:7}
    \text{%
      we may assume that
      \(Y \in \Symnonneg{V}\)
      and
      \(Y = Y[\supp(x)] \oplus 0\).
    }
  \end{equation}
  Indeed, the principal submatrix \(Y = \Yh[V]\) from~\(\Yh\) may
  possibly be replaced with
  \begin{equation*}
    Y
    -
    2
    \sum\setst*{
      \Iverson[\big]{Y_{ij} < 0} Y_{ij}
      \Symmetrize\paren{\oprod{e_i}{e_j}}
    }{
      ij \in \tbinom{V}{2}
    }
  \end{equation*}
  (using the notation~\(\Symmetrize\) from~\eqref{eq:def-symmetrize})
  without affecting the relations \(\Yh[V] \in \AdjCone\) or \(\Yh[V]
  \succeq_{\Copositive{V}} \oprodsym{x}\), by
  Proposition~\ref{prop:A-Diag-closed-polyhedral} and the trivial fact
  that \(\Copositive{V} + \Symnonneg{V} = \Copositive{V}\).  Clearly,
  for \(S \coloneqq \supp(x)\) and \(\xb \coloneqq x\restriction_S\),
  we have \(Y[S] \succeq_{\Copositive{S}} \oprodsym{\xb}\).  Thus, by
  possibly replacing~\(\Yh[V]\) with \(\Yh[S] \oplus 0\) in~\(\Yh\),
  we shall have \(Y = Y[\supp(x)] \oplus 0\), and the proof
  of~\eqref{eq:7} is complete.  Thus, by possibly restricting our
  attention to the index set \(\supp(x)\),
  \begin{equation}
    \label{eq:8}
    \text{%
      we may assume that \(\supp(x) = V\).%
    }
  \end{equation}
  Write \(D \coloneqq \Diag(x)\) and \(B \coloneqq Y - D\).  Let \(G =
  (V,E)\) be the graph defined by \(E \coloneqq \setst[\big]{ij \in
    \tbinom{V}{2}}{B_{ij} > 0}\).  Define \(A \in \AdjCone \cap
  \Null(\diag)\) by setting \(A_{ij} \coloneqq \thalf \Iverson{ij \in
    E} \paren[\big]{1/x_i + 1/x_j}\) for each \(ij \in
  \tbinom{V}{2}\), where we used
  Proposition~\ref{prop:A-Diag-closed-polyhedral} to prove membership
  of~\(A\) in~\(\AdjCone\).  We claim that
  \begin{equation}
    \label{eq:copositive-bounded-lift-aux2}
    D^{-1} + A - \oprodsym{\ones}
    \in
    \Copositive{V}.
  \end{equation}
  We shall need to consider the following optimization problem in our
  proof:
  \begin{equation}
    \label{eq:copositive-bounded-lift-aux2.5}
    \min\setst*{
      \qform{
        \paren{
          D^{-1} + A
        }
      }{
        h
      }
    }{
      h \in \Reals_+^V,\,
      \iprod{\ones}{h} = 1
    }.
  \end{equation}
  Let us show that
  \begin{equation}
    \label{eq:copositive-bounded-lift-aux2.66}
    \text{%
      there exists an optimal solution~\(\hb\)
      for~\eqref{eq:copositive-bounded-lift-aux2.5} whose support is a
      stable set in~\(G\).%
    }
  \end{equation}
  Indeed, let \(\hb\) be an optimal solution
  for~\eqref{eq:copositive-bounded-lift-aux2.5} with minimal support.
  Note that an optimal solution exists by continuity and compactness.
  Suppose that \(ij \subseteq \supp(\hb)\) for some \(ij \in E\).  For
  each \(t \in \Reals\), define \(h_t \coloneqq \hb + t(e_i - e_j)\),
  and note that \(h_t\) is feasible
  for~\eqref{eq:copositive-bounded-lift-aux2.5} whenever \(t \in
  [-\hb_i,\hb_j]\).  The objective value of \(h_t\)
  in~\eqref{eq:copositive-bounded-lift-aux2.5} is, \(\qform{(D^{-1} +
    A)}{h_t} = \qform{(D^{-1} + A)}{\hb} + 2 t (e_i - e_j)^{\transp}
  (D^{-1} + A) \hb = \qform{(D^{-1} + A)}{\hb}\), where the final
  equation follows from the optimality of \(\hb = h_0\).  Since
  \(h_{\tb}\) is feasible in~\eqref{eq:copositive-bounded-lift-aux2.5}
  for \(\tb \coloneqq \hb_j\) and \(\supp(h_{\tb}) \subsetneq
  \supp(\hb)\), the proof
  of~\eqref{eq:copositive-bounded-lift-aux2.66} is complete.

  It follows from~\eqref{eq:copositive-bounded-lift-aux2.66} that
  \(\qform{A}{\hb} = 0\) and \(\qform{D^{-1}BD^{-1}}{\hb} = 0\).  Thus,
  since \(D^{-1} Y D^{-1} \succeq_{\Copositive{V}} D^{-1} \oprodsym{x}
  D^{-1}\) by the diagonal scaling invariance of~\(\Copositive{V}\),
  we get
  \begin{equation*}
    \begin{split}
      \qform{(D^{-1} + A)}{\hb}
      & =
      \qform{D^{-1}}{\hb}
      =
      \qform{(D^{-1}DD^{-1})}{\hb}
      =
      \qform{(D^{-1}\paren{D + B}D^{-1})}{\hb}
      \\
      & \geq
      \qform{D^{-1} \oprodsym{x} D^{-1}}{\hb}
      =
      \qform{\oprodsym{\ones}}{\hb}
      =
      1.
    \end{split}
  \end{equation*}
  Thus, \(\min\setst{\qform{(D^{-1} + A - \oprodsym{\ones})}{h}}{h \in
    \Reals_+^V,\, \iprodt{\ones}{h} = 1} \geq 0\)
  and~\eqref{eq:copositive-bounded-lift-aux2} is proved.  Set \(X
  \coloneqq \DiagScaleMap{x}(D^{-1} + A)\).
  Then~\eqref{eq:copositive-bounded-lift-aux2} implies \(X
  \succeq_{\Copositive{V}} \DiagScaleMap{x}(\oprodsym{\ones}) =
  \oprodsym{x}\).  Moreover, \(\diag(X) = x\) and, for \(ij \in E\), we
  have
  \begin{equation*}
    X_{ij}
    =
    \component[\big]{
      \DiagScaleMap{x}(A)
    }{
      ij
    }
    =
    \frac{x_i x_j}{2}
    \paren*{
      \frac{1}{x_i}
      +
      \frac{1}{x_j}
    }
    =
    \frac{x_j + x_i}{2}
    \leq 1
  \end{equation*}
  by~\eqref{eq:copositive-bounded-lift-aux1}.  Since \(X_{ij} = 0\) for
  \(ij \in \overline{E}\), it follows that
  \begin{equation*}
    \Xh
    \coloneqq
    \begin{bmatrix}
      1 & x^{\transp} \\
      x & X
    \end{bmatrix}
    \in
    \liftedGenTH{
      \AdjCone
    }{
      \Schurlift{\Copositive{V}}
    }
  \end{equation*}
  and \(\pnorm{\Xh}{\infty} \leq 1\).  This completes the proof
  of~\eqref{eq:6}.  It follows that the set
  \(\genTH{\AdjCone}{\Schurlift{\Copositive{V}}}\) is closed, since it
  is described by~\eqref{eq:6} as the linear image of a compact set.
  Thus, \(\genTH{\AdjCone}{\Schurlift{\Copositive{V}}}\) is a convex
  corner by Theorem~\ref{thm:schur-K}.
\end{proof}

\begin{corollary}
  \label{cor:qstab-as-TH-AK}
  Let \(G = (V,E)\) be a graph.  Then
  \begin{equation}
    \label{eq:qstab-as-TH-AK}
    \QSTAB(G)
    =
    \genTH{
      \PolyAdjCone{E}{E}
    }{
      \Schurlift{\Copositive{V}}
    }.
  \end{equation}
  In particular, for every \(w \in \Reals_+^V\), we have
  \begin{equation}
    \label{eq:chi-star-as-theta}
    \fracChromatic(G; w)
    =
    \max\setst*{
      \iprod{w}{x}
    }{
      X \in \PolyAdjCone{\overline{E}}{\overline{E}},\,
      \diag(X) = x,\,
      X \succeq_{\Copositive{V}} \oprodsym{x}
    }.
  \end{equation}
\end{corollary}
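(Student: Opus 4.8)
The plan is to obtain~\eqref{eq:qstab-as-TH-AK} by assembling three results already available: the completely positive description of the stable set polytope (Proposition~\ref{prop:stab-as-TH-AK}), the antiblocking identity~\eqref{eq:thetaA-abl} of Theorem~\ref{thm:all-thetas-equal}, and the closedness of copositive theta bodies (Theorem~\ref{thm-copositive-bounded-lift}).  First I would instantiate Theorem~\ref{thm:all-thetas-equal} with \(\AdjCone \coloneqq \PolyAdjCone{\overline{E}}{\overline{E}}\) and \(\QuadCone \coloneqq \ComplPositive{V}\).  All the hypotheses are met: \(\AdjCone\) is polyhedral, diagonally scaling-invariant, and contains \(\Image(\Diag)\); and \(\ComplPositive{V}\) is a diagonally scaling-invariant closed convex cone contained in \(\Psd{V}\), with \(\Diag(\Reals_+^V) \subseteq \ComplPositive{V}\) and nonempty interior.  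Since \(\AdjDual{\AdjCone} = \PolyAdjCone{E}{E}\), since the dual cone of \(\ComplPositive{V}\) is \(\Copositive{V}\), and since \(\genTH{\AdjCone}{\Psdlift{\QuadCone}} = \STAB(\overline{G})\) by Proposition~\ref{prop:stab-as-TH-AK} applied to \(\overline{G}\), the identity~\eqref{eq:thetaA-abl} becomes \(\abl[\big]{\cl\paren[\big]{\genTH{\PolyAdjCone{E}{E}}{\Schurlift{\Copositive{V}}}}} = \STAB(\overline{G})\).

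Next I would invoke Theorem~\ref{thm-copositive-bounded-lift}, which guarantees that \(\genTH{\PolyAdjCone{E}{E}}{\Schurlift{\Copositive{V}}}\) is already a convex corner, so the closure operator above is redundant.  Taking antiblockers of both sides of \(\abl[\big]{\genTH{\PolyAdjCone{E}{E}}{\Schurlift{\Copositive{V}}}} = \STAB(\overline{G})\), and using \(\abl{\abl{\ConvexSet}} = \ConvexSet\) for convex corners \(\ConvexSet\) together with \(\QSTAB(G) = \abl{\STAB(\overline{G})}\), we conclude \(\genTH{\PolyAdjCone{E}{E}}{\Schurlift{\Copositive{V}}} = \QSTAB(G)\), which is~\eqref{eq:qstab-as-TH-AK}.

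For~\eqref{eq:chi-star-as-theta} I would apply~\eqref{eq:qstab-as-TH-AK} to \(\overline{G}\), whose edge set is \(\overline{E}\), obtaining \(\QSTAB(\overline{G}) = \genTH{\PolyAdjCone{\overline{E}}{\overline{E}}}{\Schurlift{\Copositive{V}}}\), and then expand \(\fracChromatic(G;w) = \suppf{\QSTAB(\overline{G})}{w}\) from~\eqref{eq:frac-chromatic} directly through the definitions~\eqref{eq:liftedGenTH} and~\eqref{eq:genTH}.  Unrolling these, a vector \(x\) lies in \(\genTH{\PolyAdjCone{\overline{E}}{\overline{E}}}{\Schurlift{\Copositive{V}}}\) exactly when there is some \(X \in \PolyAdjCone{\overline{E}}{\overline{E}}\) with \(\diag(X) = x\) and \(\onelift{x}{X} \in \Schurlift{\Copositive{V}}\); since \(\Copositive{V} \supseteq \Psd{V}\), the simplification~\eqref{eq:Schur-simplify} rewrites the last membership as \(X \succeq_{\Copositive{V}} \oprodsym{x}\).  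Because \(\genTH{\PolyAdjCone{\overline{E}}{\overline{E}}}{\Schurlift{\Copositive{V}}}\) is compact by Theorem~\ref{thm-copositive-bounded-lift}, the supremum defining its support function is attained, so it becomes the maximum displayed in~\eqref{eq:chi-star-as-theta}.

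I do not expect a deep obstacle; the argument is essentially a careful gluing of earlier results.  The one genuine subtlety is removing the closure operator in~\eqref{eq:thetaA-abl}: without Theorem~\ref{thm-copositive-bounded-lift} one would obtain only \(\QSTAB(G) = \cl\paren[\big]{\genTH{\PolyAdjCone{E}{E}}{\Schurlift{\Copositive{V}}}}\), and it is precisely for this reason that the closedness of copositive theta bodies was established separately beforehand.  A secondary point requiring care is correctly tracking the complement and the cone duality, so that \(\PolyAdjCone{E}{E}\) rather than \(\PolyAdjCone{\overline{E}}{\overline{E}}\) appears in~\eqref{eq:qstab-as-TH-AK}, and the graph whose \(\QSTAB\) is expanded in the second half is \(\overline{G}\), not \(G\).
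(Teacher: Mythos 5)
Your proposal is correct and follows essentially the same route as the paper: combine the antiblocking identity of Theorem~\ref{thm:all-thetas-equal} (instantiated with \(\PolyAdjCone{\overline{E}}{\overline{E}}\) and \(\ComplPositive{V}\)) with Proposition~\ref{prop:stab-as-TH-AK} and the closedness from Theorem~\ref{thm-copositive-bounded-lift}, then conclude by antiblocking duality and derive~\eqref{eq:chi-star-as-theta} via~\eqref{eq:Schur-simplify}. The hypothesis checks and the attention to removing the closure operator are exactly the points the paper relies on.
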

\begin{proof}
  We know that
  \(
  \abl[\big]{
    \genTH[\big]{
      \PolyAdjCone{E}{E}
    }{
      \Schurlift{\Copositive{V}}
    }
  }
  =
  \genTH[\big]{
    \PolyAdjCone{\overline{E}}{\overline{E}}
  }{
    \Psdlift{\ComplPositive{V}}
  }
  =
  \STAB(\overline{G})
  \)
  by Theorems~\ref{thm:all-thetas-equal}
  and~\ref{thm-copositive-bounded-lift} and
  Proposition~\ref{prop:stab-as-TH-AK}.  Thus, \eqref{eq:qstab-as-TH-AK}
  follows from antiblocking duality.  Now~\eqref{eq:chi-star-as-theta}
  follows from~\eqref{eq:qstab-as-TH-AK} and~\eqref{eq:Schur-simplify}
  since, for each \(w \in \Reals_+^V\), we have
  \(
  \fracChromatic(G; w)
  =
  \suppf[\big]{
    \QSTAB(\overline{G})
  }{
    w
  }
  =
  \suppf[\big]{
    \genTH[\big]{
      \PolyAdjCone{\overline{E}}{\overline{E}}
    }{
      \Schurlift{\Copositive{V}}
    }
  }{
    w
  }
  \).
\end{proof}

\section{Hoffman Bounds}
\label{sec:Hoffman}

The \emph{chromatic number} of a graph~\(G = (V,E)\), denoted
by~\(\chromatic(G)\), is the size of a smallest partition of~\(G\)
into stable sets.  Hoffman~\cite{Hoffman70a} proved the following
classical lower bound on~\(\chromatic(G)\):
\begin{equation}
  \label{eq:hoffman}
  \chromatic(G)
  \geq
  1
  -
  \frac{
    \lambdamax(\AdjMatrix{G})
  }{
    \lambdamin(\AdjMatrix{G})
  }.
\end{equation}
Here, \(\AdjMatrix{G}\) denotes the adjacency matrix of~\(G\).
Lovász~\cite{Lovasz79a} proved that the lower bound~\eqref{eq:hoffman}
on~\(\chromatic(G)\) remains valid if the adjacency
matrix~\(\AdjMatrix{G}\) is replaced with any matrix
in~\(\PolyAdjCone{E}{E}^{\perp}\), and that the tightest lower bound
on~\(\chromatic(G)\) arising in this manner is
precisely~\(\theta(\overline{G})\).  Knuth~\cite[Sec.~33]{Knuth94a}
defined another graph parameter, that he denoted by~\(\theta_6(G;w)\),
which is in fact equal to~\(\theta(G;w)\).  The parameter
\(\theta_6(G;w)\) is defined as an optimization problem, and the
objective function corresponding to \(\theta_6(\overline{G};\ones)\)
yields precisely the expression in the RHS of~\eqref{eq:hoffman} when
applied to an arbitrary matrix \(A \in \PolyAdjCone{E}{E}^{\perp}\).
We shall extend our framework in this direction.

Let \(\AdjCone \subseteq \Sym{V}\) and \(\QuadCone \subseteq
\Sym{V}\).  Following Knuth~\cite[Sec.~33]{Knuth94a}, we define
\begin{equation}
  \theta_6(\AdjCone, \QuadCone; w)
  \coloneqq
  \sup\setst[\big]{
    \lambdamax(B)
  }{
    \diag(B) = w,\,
    B \in \QuadCone,\,
    B \in \AdjCone
  }
\end{equation}
for every \(w \in \Reals_+^V\).  Note that the optimization problem on
the RHS above is not convex.  The next result relates the formulations for
\(\theta_6(\AdjCone, \QuadCone; w)\) and~\(\theta_3(\AdjCone,
\QuadCone; w)\).
\bgroup
\newcommand*{\Cone}{\mathbb{M}}
\newcommand*{\Bopt}{\bar{B}}
\newcommand*{\Xopt}{X^*}
\begin{theorem}
  \label{thm:theta3-theta6}
  Let \(\Cone \subseteq \Sym{V}\) be a diagonally scaling-invariant
  closed convex cone such
  that~\eqref{eq:gijswijt-lambda-trick-S-hyp} holds, and that
  \(\Diag(\Reals_+^V) \subseteq \Cone\).  Suppose that either
  \(\DiagScaleMap{h}(\Cone) \subseteq \Cone\) for every \(h \in
  \Reals^V\) or \(\Cone \subseteq \Symnonneg{V}\).  Let \(w \in
  \Reals_+^V\).  Then
  \begin{equation}
    \label{eq:theta3-theta6}
    \max\setst[\Big]{
      \lambda_{\max}(B)
    }{
      B \in \Cone,\,
      \diag(B) = w
    }
    =
    \max\setst[\Big]{
      \qform{X}{\sqrt{w}}
    }{
      \trace(X) = 1,\,
      X \in \Cone
    }.
  \end{equation}
  Moreover, both optimization problems have optimal solutions.
\end{theorem}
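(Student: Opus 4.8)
The plan is to establish the two inequalities between the optimal values separately, after a quick reduction. First I would settle attainment: if \(w = 0\) then~\eqref{eq:gijswijt-lambda-trick-S-hyp2} forces \(B = 0\) whenever \(\diag(B) = 0\), and both maxima equal~\(0\); if \(w \neq 0\), the set \(\setst{X \in \Cone}{\trace(X) = 1}\) is compact by~\eqref{eq:gijswijt-lambda-trick-S-hyp3}, so the RHS maximum is attained, while \(\setst{B \in \Cone}{\diag(B) = w}\) is a nonempty (it contains \(\Diag(w)\)) closed subset of the compact set \(\iprod{\ones}{w} \cdot \setst{X \in \Cone}{\trace(X) = 1}\), hence compact, so the LHS maximum is attained. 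Since \(\lambdamax\) and \(X \mapsto \qform{X}{\sqrt{w}}\) are continuous, it then remains only to prove that the two values coincide.

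For~`\(\geq\)', let \(\Xopt\) attain the RHS. If \(\qform{\Xopt}{\sqrt{w}} = 0\), then \(B := \Diag(w) \in \Cone\) has \(\diag(B) = w\) and \(\lambdamax(B) = \max_i w_i \geq 0\), and we are done; so suppose \(\qform{\Xopt}{\sqrt{w}} > 0\). I would then apply Lemma~\ref{lemma:gijswijt-lambda-trick} to \(\Cone\) and~\(w\). With \(d := \diag(\Xopt) \geq 0\) (by~\eqref{eq:gijswijt-lambda-trick-S-hyp1}), the matrix \(\MPinverse{\Diag(\sqrt{d})}\) is the diagonal matrix with \(i\)-th entry \(\Iverson{d_i > 0}/\sqrt{d_i} \geq 0\), so \(\Xb := \MPinverse{\Diag(\sqrt{d})}\,\Xopt\,\MPinverse{\Diag(\sqrt{d})}\) is obtained from~\(\Xopt\) by a nonnegative diagonal scaling and hence lies in~\(\Cone\), and so does \(\DiagScaleMap{\sqrt{w}}(\Xb)\). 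By~\eqref{eq:gijswijt-lambda-trick3}, \(\lambdamax\paren{\DiagScaleMap{\sqrt{w}}(\Xb)} = \qform{\Xopt}{\sqrt{w}}\), and a direct computation gives \(\diag\paren{\DiagScaleMap{\sqrt{w}}(\Xb)} = w \hprod \incidvector{\supp(d)}\). Then \(B := \DiagScaleMap{\sqrt{w}}(\Xb) + \Diag\paren{w \hprod \incidvector{V \drop \supp(d)}}\) lies in~\(\Cone\) (the second summand because \(\Diag(\Reals_+^V) \subseteq \Cone\)), satisfies \(\diag(B) = w\), and, since the added term is positive semidefinite, \(\lambdamax(B) \geq \lambdamax\paren{\DiagScaleMap{\sqrt{w}}(\Xb)} = \qform{\Xopt}{\sqrt{w}}\). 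This proves~`\(\geq\)'.

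For~`\(\leq\)', let \(\Bopt\) attain the LHS and put \(\lambda := \lambdamax(\Bopt)\); if \(\lambda = 0\) there is nothing to prove, so assume \(\lambda > 0\). I would fix a unit eigenvector~\(v\) of~\(\Bopt\) for~\(\lambda\), and in the case \(\Cone \subseteq \Symnonneg{V}\) I would first arrange \(v \geq 0\): since \(\Bopt\) is entrywise nonnegative, replacing a unit eigenvector by its entrywise absolute value does not decrease its Rayleigh quotient, so \(\lambdamax(\Bopt) = \max\setst{\qform{\Bopt}{x}}{x \geq 0,\ \norm{x} = 1}\), attained at some \(v \geq 0\), which, being a global maximizer of the Rayleigh quotient over the sphere, is an eigenvector for~\(\lambda\). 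In either case,~\eqref{eq:gijswijt-lambda-trick-S-hyp2} yields \(\supp(v) \subseteq \supp(w)\): if \(w_i = 0\) then \(\Bopt_{ii} = 0\), so \(\Bopt e_i = 0\), so \(\lambda v_i = (\Bopt v)_i = 0\) and \(v_i = 0\). Define \(h \in \Reals^V\) by \(h_i := \Iverson{w_i \neq 0}\, v_i / \sqrt{w_i}\), so that \(h \hprod \sqrt{w} = v\) and \(\sum_i h_i^2 w_i = \norm{v}^2 = 1\). Then \(X := \DiagScaleMap{h}(\Bopt) \in \Cone\) — because \(h \geq 0\) and \(\Cone\) is diagonally scaling-invariant in the case \(\Cone \subseteq \Symnonneg{V}\), and trivially in the case \(\DiagScaleMap{h}(\Cone) \subseteq \Cone\) for all \(h \in \Reals^V\) — with \(\trace(X) = \sum_i h_i^2 w_i = 1\) and \(\qform{X}{\sqrt{w}} = \qform{\Bopt}{h \hprod \sqrt{w}} = \qform{\Bopt}{v} = \lambda\). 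Hence the RHS value is at least~\(\lambda\), which completes the proof.

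I expect the main obstacle to be the \(\Cone \subseteq \Symnonneg{V}\) branch of~`\(\leq\)': one must produce a \emph{nonnegative} eigenvector for \(\lambdamax(\Bopt)\), which rests on the Perron--Frobenius-type fact that for a symmetric entrywise-nonnegative matrix the largest eigenvalue coincides with the spectral radius and admits a nonnegative eigenvector — and this is exactly what forces the hypothesis to split into the two stated cases (in the fully scaling-invariant case one simply tolerates sign changes in~\(h\)). The remaining care needed is the bookkeeping with zero patterns — the use of~\eqref{eq:gijswijt-lambda-trick-S-hyp2} to get \(\supp(v) \subseteq \supp(w)\) and to dispose of \(\lambda = 0\), and the correction term \(\Diag\paren{w \hprod \incidvector{V \drop \supp(d)}}\) in~`\(\geq\)' that restores the diagonal to~\(w\) — but these are routine.
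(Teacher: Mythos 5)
Your proof is correct and follows essentially the same route as the paper's: the `\(\geq\)' direction uses Lemma~\ref{lemma:gijswijt-lambda-trick} with the same correction term \(\Diag\paren{w \hprod \incidvector{V \drop \supp(d)}}\), and the `\(\leq\)' direction scales an optimal \(\Bopt\) by (the pseudo-inverse square root of) \(w\) and a top eigenvector, splitting into the same two cases and invoking the same Perron--Frobenius fact to get a nonnegative eigenvector when \(\Cone \subseteq \Symnonneg{V}\). The only differences are cosmetic: you merge the paper's two successive diagonal scalings \(\Xt\) and \(\Xb\) into a single map \(\DiagScaleMap{h}\), and you spell out the compactness argument for attainment of the left-hand side slightly more explicitly.
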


\begin{proof}
  Equation~\eqref{eq:theta3-theta6} when \(w = 0\) follows
  from~\eqref{eq:gijswijt-lambda-trick-S-hyp2}.  Thus, we may assume
  that \(w \neq 0\).  Then the RHS of~\eqref{eq:theta3-theta6} is
  positive, whence Lemma~\ref{lemma:gijswijt-lambda-trick} may be
  applied.  We start by proving~`\(\geq\)'
  in~\eqref{eq:theta3-theta6}.  Let \(\Xopt\) be an optimal solution
  for the RHS of~\eqref{eq:theta3-theta6}.  Define \(d\) and~\(\Xb\)
  as in the statement of Lemma~\ref{lemma:gijswijt-lambda-trick}.
  Then \(\Bb \coloneqq \DiagScaleMap{\sqrt{w}}(\Xb) + \Diag\paren*{w
    \hprod \incidvector{V \drop \supp(d)}}\) is feasible for the LHS
  and its objective value is \(\lambda_{\max}(\Bb) \geq
  \lambda_{\max}\paren*{\DiagScaleMap{\sqrt{w}}(\Xb)} =
  \qform{\Xopt}{\sqrt{w}}\) by~\eqref{eq:gijswijt-lambda-trick3}.

  Next we prove~`\(\leq\)' in~\eqref{eq:theta3-theta6}.  Let \(\Bopt\)
  be an optimal solution for the LHS of~\eqref{eq:theta3-theta6}; one
  exists by compactness, as a consequence
  of~\eqref{eq:gijswijt-lambda-trick-S-hyp3}.  Let \(\lambda \coloneqq
  \lambda_{\max}(\Bopt) > 0\) and let \(b \in \Reals^V\) be a unit
  vector such that \(\Bopt b = \lambda b\).  Note that \(\supp(b)
  \subseteq \supp(w)\) by~\eqref{eq:gijswijt-lambda-trick-S-hyp2}.
  The matrix \(\Xt \coloneqq \MPinverse{\Diag(\sqrt{w})} \Bopt
  \MPinverse{\Diag(\sqrt{w})}\) satisfies \(\diag(\Xt) =
  \incidvector{\supp(w)}\), whence \(\Xb \coloneqq
  \DiagScaleMap{b}(\Xt)\) satisfies \(\trace(\Xb) = \norm{b}^2 = 1\).
  If \(\DiagScaleMap{h}(\Cone) \subseteq \Cone\) for each \(h \in
  \Reals^V\), then \(\Xb \in \Cone\) follows from \(\Bopt \in \Cone\).
  If \(\Cone \subseteq \Symnonneg{V}\), then \(\Xb \in \Cone\) follows
  from \(\Bopt \in \Cone\) and by the diagonal scaling invariance
  of~\(\Cone\), since we may assume that \(b \geq 0\) by the
  Perron-Frobenius Theorem; see, e.g., \cite[Theorem~8.3.1]{HornJ90a}.
  In either case, we find that \(\Xb \in \Cone\), whence \(\Xb\) is
  feasible in the RHS of~\eqref{eq:theta3-theta6}.  Finally, its
  objective value in the RHS of~\eqref{eq:theta3-theta6} is
  \(\qform{\Xb}{\sqrt{w}} = \qform{\DiagScaleMap{b}(\Xt)}{\sqrt{w}} =
  \qform{\DiagScaleMap{\sqrt{w}}(\Xt)}{b} = \qform{\Bopt}{b} =
  \lambda\), where we used~\eqref{eq:gijswijt-lambda-trick-S-hyp2} to
  get \(\Bopt = \DiagScaleMap{\sqrt{w}}(\Xt)\).  This completes the
  proof of~\eqref{eq:theta3-theta6}.
\end{proof}
\egroup

\begin{corollary}
  \label{cor:theta3-theta6-explicit}
  Let \(\AdjCone \subseteq \Sym{V}\) be a diagonally scaling-invariant
  polyhedral cone such that \(\Image(\Diag) \subseteq \AdjCone\).  Let
  \(\QuadCone \subseteq \Psd{V}\) be a diagonally scaling-invariant
  closed convex cone such that \(\Diag(\Reals_+^V) \subseteq \QuadCone\).  Let \(w \in \Reals_+^V\).  If either
  \(\DiagScaleMap{h}(\AdjCone \cap \QuadCone) \subseteq \AdjCone \cap
  \QuadCone\) for all \(h \in \Reals^V\) or \(\AdjCone \cap \QuadCone
  \subseteq \Symnonneg{V}\), then
  \begin{equation}
    \label{eq:theta3-theta6-explicit}
    \theta_6(\AdjCone, \QuadCone; w)
    =
    \theta_3(\AdjCone, \QuadCone; w)
  \end{equation}
\end{corollary}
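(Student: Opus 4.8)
The plan is to deduce the identity from Theorem~\ref{thm:theta3-theta6} applied to the cone $\mathbb{M} \coloneqq \AdjCone \cap \QuadCone$. First I would observe that, in the definitions of $\theta_6(\AdjCone,\QuadCone;w)$ and $\theta_3(\AdjCone,\QuadCone;w)$, the pair of membership constraints into $\QuadCone$ and into $\AdjCone$ amounts to the single constraint of membership in $\mathbb{M}$; hence~\eqref{eq:theta3-theta6-explicit} unfolds to
\begin{equation*}
  \max\setst[\big]{\lambdamax(B)}{B \in \mathbb{M},\, \diag(B) = w}
  =
  \max\setst[\big]{\qform{X}{\sqrt{w}}}{\trace(X) = 1,\, X \in \mathbb{M}},
\end{equation*}
which is exactly the conclusion of Theorem~\ref{thm:theta3-theta6} (the suprema in the definitions of $\theta_6$ and $\theta_3$ being attained, by the attainment assertion in that theorem).

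It remains to check that $\mathbb{M}$ satisfies the hypotheses of Theorem~\ref{thm:theta3-theta6}. Being an intersection of two diagonally scaling-invariant closed convex cones, $\mathbb{M}$ is again one. For~\eqref{eq:gijswijt-lambda-trick-S-hyp} I would use the inclusion $\mathbb{M} \subseteq \QuadCone \subseteq \Psd{V}$: the diagonal of a positive semidefinite matrix is nonnegative, which yields~\eqref{eq:gijswijt-lambda-trick-S-hyp1}; a positive semidefinite matrix with a vanishing diagonal entry has the corresponding row and column equal to zero, which yields~\eqref{eq:gijswijt-lambda-trick-S-hyp2}; and $\setst{X \in \mathbb{M}}{\trace(X) = 1}$ is a closed subset of the compact spectrahedron $\setst{X \in \Psd{V}}{\trace(X) = 1}$, hence compact, yielding~\eqref{eq:gijswijt-lambda-trick-S-hyp3}. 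Next, $\Image(\Diag) \subseteq \AdjCone$ gives $\Diag(\Reals_+^V) \subseteq \AdjCone$, and $\Diag(\Reals_+^V) \subseteq \QuadCone$ holds by hypothesis, so $\Diag(\Reals_+^V) \subseteq \mathbb{M}$. Finally, the disjunctive hypothesis required by Theorem~\ref{thm:theta3-theta6} --- that either $\DiagScaleMap{h}(\mathbb{M}) \subseteq \mathbb{M}$ for every $h \in \Reals^V$, or $\mathbb{M} \subseteq \Symnonneg{V}$ --- is precisely the disjunctive hypothesis imposed on $\AdjCone \cap \QuadCone$ in the statement of this corollary.

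With all hypotheses in place, Theorem~\ref{thm:theta3-theta6} gives the claimed equality (and, incidentally, attainment on both sides). I foresee no genuine obstacle in this argument; the only mild subtleties are to note that the weak form $\Diag(\Reals_+^V) \subseteq \QuadCone$ of the diagonal-containment assumption suffices here, and that no interior or Slater-type condition on $\QuadCone$ is needed, since Theorem~\ref{thm:theta3-theta6} requires only the three properties in~\eqref{eq:gijswijt-lambda-trick-S-hyp}, the containment of the nonnegative diagonal matrices, and the disjunctive invariance condition.
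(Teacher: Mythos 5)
Your proposal is correct and follows exactly the route the paper takes: the paper's proof is the one-liner ``Immediate from Theorem~\ref{thm:theta3-theta6},'' and your argument simply spells out the verification (applying that theorem to $\AdjCone \cap \QuadCone$ and checking hypotheses~\eqref{eq:gijswijt-lambda-trick-S-hyp} via $\AdjCone \cap \QuadCone \subseteq \Psd{V}$) that the paper leaves implicit. All the checks are accurate, including the observation that only $\Diag(\Reals_+^V) \subseteq \QuadCone$ and no interior condition is needed.
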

\begin{proof}
  Immediate from Theorem~\ref{thm:theta3-theta6}.
\end{proof}

Next we shall show that, when applied to \(w = \ones\), the objective
value of the LHS of~\eqref{eq:theta3-theta6} has the same form as the
RHS of~\eqref{eq:hoffman}, and thus generalizes it:
\begin{proposition}
  \label{prop:hoffman-bounds-expr}
  Let \(\AdjCone, \QuadCone \subseteq \Sym{V}\) be diagonally
  scaling-invariant closed convex cones.  Suppose that \(\AdjCone\) is
  polyhedral and that \(I \in \AdjCone \cap \QuadCone\).  Then
  \begin{equation}
    \label{eq:hoffman-bounds}
    \theta_6(\AdjCone, \QuadCone; \ones)
    =
    \max \setst*{
      1
      -
      \Iverson{\mu \neq 0}
      \frac{\lambda_{\max}(A)}{\mu}
    }{
      A \in \AdjCone \cap \Null(\diag),\,
      \mu \in - \Reals_+,\,
      A \succeq_{\QuadCone} \mu I
    }.
  \end{equation}
\end{proposition}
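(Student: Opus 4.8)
The plan is to unwind the definition of $\theta_6(\AdjCone, \QuadCone; \ones)$ and massage the constraint $B \in \QuadCone$ with $\diag(B) = \ones$ into the form on the RHS of~\eqref{eq:hoffman-bounds}. Recall
\[
  \theta_6(\AdjCone, \QuadCone; \ones)
  =
  \sup\setst[\big]{\lambdamax(B)}{\diag(B) = \ones,\, B \in \QuadCone,\, B \in \AdjCone}.
\]
Given a feasible $B$, write $B = A + D$, where $D \coloneqq \Diag(\diag(B)) = I$ (since $\diag(B) = \ones$) and $A \coloneqq B - I \in \AdjCone \cap \Null(\diag)$; here membership of $A$ in~$\AdjCone$ uses that $I = \Diag(\ones) \in \Image(\Diag) \subseteq \AdjCone$ only after we check $A$ lies in $\AdjCone$, which follows from Proposition~\ref{prop:A-Diag-closed-polyhedral}: since $\AdjCone$ is polyhedral and diagonally scaling-invariant with $\Image(\Diag) \subseteq \AdjCone$, it has the form $\PolyAdjCone{E^+}{E^-}$, so subtracting a diagonal matrix does not affect membership. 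Then $\lambdamax(B) = \lambdamax(A + I) = 1 + \lambdamax(A)$. So we must show
\[
  \sup\setst[\big]{1 + \lambdamax(A)}{A \in \AdjCone \cap \Null(\diag),\, A + I \in \QuadCone}
  =
  \max\setst[\big]{1 - \Iverson{\mu \neq 0}\tfrac{\lambdamax(A)}{\mu}}{A \in \AdjCone \cap \Null(\diag),\, \mu \in -\Reals_+,\, A \succeq_{\QuadCone} \mu I}.
\]
The first step, then, is to reduce the $\theta_6$ problem to the left-hand side above.

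Next I would exploit diagonal scaling invariance to put a free positive scaling onto $A$: if $A \in \AdjCone \cap \Null(\diag)$ then so is $tA$ for every $t > 0$ (by $\DiagScaleMap{\sqrt t \ones} \in \Aut(\AdjCone)$, which holds since $\AdjCone$ is diagonally scaling-invariant), and $\lambdamax(tA) = t\lambdamax(A)$. For $\geq$: given a feasible $(A,\mu)$ on the RHS, if $\mu = 0$ then $A \succeq_{\QuadCone} 0$, i.e.\ $A \in \QuadCone$; since $I \in \QuadCone$, also $A + I \in \QuadCone$, and the objective $1$ is attained on the LHS (taking $A$ itself, or note $\lambdamax$ is unbounded unless $A$ is bounded — actually here I should be careful and instead observe that the RHS objective is $1$, matching a feasible point with $A$ scaled to have $\lambdamax(A) \le 0$, which exists since $-A$ is also feasible after reflecting... this is the delicate case). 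If $\mu < 0$, set $A' \coloneqq (-1/\mu) A \in \AdjCone \cap \Null(\diag)$; then $A \succeq_{\QuadCone} \mu I$ becomes $-\mu A' \succeq_{\QuadCone} \mu I$, i.e.\ $A' \preceq_{\QuadCone} -I$ — wait, dividing by the negative $\mu$ flips the cone inequality, giving $A' + I \preceq_{\QuadCone} 0$, not what I want. Let me instead set $A' \coloneqq (1/\mu) A$ ... the sign bookkeeping is exactly where care is needed: I want $A' + I \in \QuadCone$, i.e.\ $A' \succeq_{\QuadCone} -I$, equivalently $-\mu A' \succeq_{\QuadCone} \mu I$ when $\mu < 0$ (flipping), i.e.\ $A \succeq_{\QuadCone} \mu I$ with $A = -\mu A'$, so $A' = -A/\mu = A/(-\mu) = A/|\mu|$. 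Then $\lambdamax(A') = \lambdamax(A)/|\mu| = -\lambdamax(A)/\mu$, so $1 + \lambdamax(A') = 1 - \lambdamax(A)/\mu$, matching. The reverse direction $\leq$ is the same computation run backwards (given $A'$ with $A' + I \in \QuadCone$, take $\mu = -1$, $A = A'$).

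The main obstacle I expect is the $\mu = 0$ boundary case and the Iverson-bracket convention $\Iverson{\mu \neq 0}(\lambdamax(A)/\mu)$, which is declared ``strongly zero'' in the preliminaries: when $\mu = 0$ the RHS objective is exactly $1$, so I must verify that the supremum on the LHS is also $\geq 1$ whenever the RHS is feasible with some $\mu = 0$, and conversely that a LHS value of $1 + \lambdamax(A)$ with $\lambdamax(A) \le 0$ is matched. Here diagonal scaling helps: if $A + I \in \QuadCone$ with $\lambdamax(A) < 0$, scaling $A \mapsto tA$ keeps $tA + tI \in \QuadCone$ but not $tA + I$; instead I should note that $A + I \in \QuadCone$ and $I \in \QuadCone$ give, by convexity, the whole segment, and as $t \downarrow 0$, $tA + I \to I \in \QuadCone$, but I need it for a specific positive $t$ to push $\lambdamax(tA) \uparrow 0$. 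Since $\QuadCone$ is a convex cone containing $I$ and $A + I$, it contains $tA + tI = t(A+I)$ and hence... the cleanest fix is: $A+I \in \QuadCone$ and $I \in \QuadCone$ imply $sA + I = s(A+I) + (1-s)I \in \QuadCone$ for all $s \in [0,1]$, so on the LHS we may always replace $A$ by $sA$, and the supremum of $1 + s\lambdamax(A)$ over $s \in (0,1]$ (together with $s \to 0$) is $\max(1, 1 + \lambdamax(A))$; pairing the regime $\lambdamax(A) \le 0$ with $\mu = 0$ (objective $1$) and $\lambdamax(A) > 0$ with the scaling argument above then gives equality in both directions. Writing this segment/scaling reduction cleanly, and confirming the RHS ``$\max$'' is attained (using that by Theorem~\ref{thm:theta3-theta6} / Lemma~\ref{lemma:gijswijt-lambda-trick}-type compactness the underlying $\theta_3$-style problem attains its optimum, or by a direct compactness argument on $\{A : \trace(A^2) \le 1\}$-normalized feasible sets), is the part that needs the most care; everything else is the routine substitution $B = A + I$, $\mu = -1$ described above.
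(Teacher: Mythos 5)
Your proposal is correct and follows essentially the same route as the paper's proof: the substitution \(B = I + A\) with \(A \in \AdjCone \cap \Null(\diag)\) justified via Proposition~\ref{prop:A-Diag-closed-polyhedral}, followed by the positive rescaling that identifies \(I + \nu A \in \QuadCone\) with \(A \succeq_{\QuadCone} \mu I\) for \(\mu = -1/\nu\). Two small tidying remarks: the hypothesis here is \(I \in \AdjCone \cap \QuadCone\) rather than \(\Image(\Diag) \subseteq \AdjCone\) (and \(I \in \AdjCone\) is exactly what is needed to pass from \(A \in \AdjCone \cap \Null(\diag)\) back to \(I + A \in \AdjCone\)), and the \(\mu = 0\) case you agonize over is matched simply by \(A = 0\) on the left-hand side, which gives objective value \(1\).
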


\begin{proof}
  We have
  \begin{equation*}
    \begin{split}
      &
      \max\setst[\big]{
        \lambda_{\max}(B)
      }{
        \diag(B) = \ones,\,
        B \in \AdjCone,\,
        B \in \QuadCone
      }
      \\
      & \qquad\qquad =
      \max\setst[\big]{
        \lambda_{\max}(I + A)
      }{
        \diag(A) = 0,\,
        I + A \in \AdjCone,\,
        I + A \in \QuadCone
      }
      \\
      & \qquad\qquad =
      \max\setst[\big]{
        1 + \Iverson{\nu \neq 0} \nu \lambda_{\max}(A)
      }{
        A \in \AdjCone \cap \Null(\diag),\,
        \nu \in \Reals_+,\,
        \nu A \succeq_{\QuadCone} -I
      }
      \\
      & \qquad\qquad =
      \max\setst*{
        1
        -
        \Iverson{\mu \neq 0}
        \frac{\lambda_{\max}(A)}{\mu}
      }{
        A \in \AdjCone \cap \Null(\diag),\,
        \mu \in - \Reals_+,\,
        A \succeq_{\QuadCone} \mu I
      }.
    \end{split}
  \end{equation*}
  Note that we used Proposition~\ref{prop:A-Diag-closed-polyhedral} on
  the second equation.  That equation also uses \(I \in \AdjCone\),
  whereas the third one makes use of \(I \in \QuadCone\).
\end{proof}

\begin{corollary}
  \label{cor:hoffman-theta}
  Let \(G = (V,E)\) be graph.  Then
  \begin{subequations}
    \label{eq:hoffman-theta}
    \begin{align}
      \theta\paren*{\overline{G}; \ones}
      & =
      \max\setst*{
        1 - \Iverson{A \neq 0} \dfrac{\lambdamax(A)}{\lambdamin(A)}
      }{
        A \in \dual{\PolyAdjCone{E}{E}}
      },
      \\
      \theta'\paren*{\overline{G}; \ones}
      & =
      \max\setst*{
        1 - \Iverson{A \neq 0} \dfrac{\lambdamax(A)}{\lambdamin(A)}
      }{
        A \in \dual{\PolyAdjCone{E \cup \overline{E}}{E}}
      },
      \\
      \stab\paren*{\overline{G}; \ones}
      & =
      \max\setst*{
        1 - \Iverson{\mu \neq 0} \dfrac{\lambdamax(A)}{\mu}
      }{
        A \in \dual{\PolyAdjCone{E}{E}},\,
        \mu \in -\Reals_+,\,
        A \succeq_{\ComplPositive{V}} \mu I
      }.
    \end{align}
  \end{subequations}
  Moreover, all the optimization problems in~\eqref{eq:hoffman-theta}
  have optimal solutions.
\end{corollary}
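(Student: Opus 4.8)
The plan is to deduce each of the three identities from Proposition~\ref{prop:hoffman-bounds-expr} by first rewriting its left-hand side as a value $\theta_6(\AdjCone,\QuadCone;\ones)$ for a suitable pair of cones, and then doing bookkeeping on the resulting right-hand side. For $\theta\paren*{\overline{G};\ones}$ I would start from $\theta\paren*{\overline{G};\ones}=\theta\paren*{\PolyAdjCone{\overline{E}}{\overline{E}},\Psd{V};\ones}$, use Theorem~\ref{thm:all-thetas-equal} to pass to $\theta_3\paren*{\PolyAdjCone{\overline{E}}{\overline{E}},\Psd{V};\ones}$, and then Corollary~\ref{cor:theta3-theta6-explicit} to pass to $\theta_6\paren*{\PolyAdjCone{\overline{E}}{\overline{E}},\Psd{V};\ones}$. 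For $\theta'\paren*{\overline{G};\ones}$ I would do exactly the same with $\PolyAdjCone{\overline{E}\cup E}{\overline{E}}$ in place of $\PolyAdjCone{\overline{E}}{\overline{E}}$ (as dictated by the definition of $\theta'$ applied to $\overline{G}$). For $\stab\paren*{\overline{G};\ones}$ I would instead invoke Proposition~\ref{prop:stab-as-TH-AK} to write $\stab\paren*{\overline{G};\ones}=\suppf{\genTH{\PolyAdjCone{\overline{E}}{\overline{E}}}{\Psdlift{\ComplPositive{V}}}}{\ones}=\theta_4\paren*{\PolyAdjCone{\overline{E}}{\overline{E}},\ComplPositive{V};\ones}$, then use Theorem~\ref{thm:theta3-theta4} and Corollary~\ref{cor:theta3-theta6-explicit} to reach $\theta_6\paren*{\PolyAdjCone{\overline{E}}{\overline{E}},\ComplPositive{V};\ones}$.

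All hypotheses of the cited results hold, but the either/or clause of Corollary~\ref{cor:theta3-theta6-explicit} needs a case split. For the $\stab$ identity the cone $\AdjCone\cap\QuadCone=\PolyAdjCone{\overline{E}}{\overline{E}}\cap\ComplPositive{V}$ is contained in $\Symnonneg{V}$ because completely positive matrices are entrywise nonnegative; the same ``$\subseteq\Symnonneg{V}$'' branch works for $\theta'$, since $\PolyAdjCone{\overline{E}\cup E}{\overline{E}}\cap\Psd{V}$ forces every off-diagonal entry to be nonnegative while the diagonal is nonnegative by positive semidefiniteness. For $\theta$, however, $\PolyAdjCone{\overline{E}}{\overline{E}}\cap\Psd{V}$ need not lie in $\Symnonneg{V}$, so there I would use the other branch: $\DiagScaleMap{h}\paren*{\PolyAdjCone{\overline{E}}{\overline{E}}}\subseteq\PolyAdjCone{\overline{E}}{\overline{E}}$ for \emph{every} $h\in\Reals^V$ (because this cone is cut out only by the equalities $X_{ij}=0$ for $ij\in\overline{E}$, which any diagonal scaling preserves), and $\DiagScaleMap{h}\paren*{\Psd{V}}\subseteq\Psd{V}$ for every real $h$ by congruence. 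The remaining conditions---$\Image(\Diag)\subseteq\AdjCone$, $\Diag(\Reals_+^V)\subseteq\QuadCone$, polyhedrality and closedness of the cones, $I\in\AdjCone\cap\QuadCone$, and nonempty interior where Theorem~\ref{thm:all-thetas-equal} is used---are immediate in each instance.

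Next I would apply Proposition~\ref{prop:hoffman-bounds-expr} to turn $\theta_6(\AdjCone,\QuadCone;\ones)$ into $\max\setst*{1-\Iverson{\mu\neq 0}\lambdamax(A)/\mu}{A\in\AdjCone\cap\Null(\diag),\ \mu\in-\Reals_+,\ A\succeq_{\QuadCone}\mu I}$, and finish with two elementary reductions. The first is a cone identification: $\AdjCone\cap\Null(\diag)$ is the cone displayed in the statement---for $\AdjCone=\PolyAdjCone{\overline{E}}{\overline{E}}$ this is the identity $\PolyAdjCone{\overline{E}}{\overline{E}}\cap\Null(\diag)=\dual{\PolyAdjCone{E}{E}}$ (both sides being the symmetric matrices with zero diagonal supported on $E$), and the analogous computation handles the $\theta'$ cone. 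The second applies only to the $\theta$ and $\theta'$ identities, where $\QuadCone=\Psd{V}$: there $A\succeq_{\Psd{V}}\mu I$ is the same as $\mu\le\lambdamin(A)$, and since every $A$ in the constraint cone has $\trace(A)=0$, a nonzero such $A$ has $\lambdamin(A)<0<\lambdamax(A)$; maximizing $1-\lambdamax(A)/\mu$ over $\mu\in(-\infty,\lambdamin(A)]$ is attained at $\mu=\lambdamin(A)$ with value $1-\lambdamax(A)/\lambdamin(A)$, while $A=0$ gives $1$ via $\mu=0$, so the $\mu$-variable collapses into the claimed formula. For the $\stab$ identity $\ComplPositive{V}$ is not self-dual in the relevant sense, so $\mu$ and the conic inequality $A\succeq_{\ComplPositive{V}}\mu I$ remain. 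Attainment of all maxima is inherited by tracking optimal solutions through these identifications, using the attainment assertions in Theorems~\ref{thm:theta3-theta4} and~\ref{thm:theta3-theta6} and Corollary~\ref{cor:TH-AK-convex-corner}.

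The step I expect to cost the most care is not deep but is the most error-prone: carrying the cone bookkeeping through all three cases simultaneously---matching the cone $\AdjCone\cap\Null(\diag)$ produced by Proposition~\ref{prop:hoffman-bounds-expr} with the one written in the statement, and picking the correct branch of the either/or hypothesis of Corollary~\ref{cor:theta3-theta6-explicit} (the nonobvious point being that for $\theta\paren*{\overline{G};\ones}$ the ``entrywise nonnegative'' branch fails, and one must instead exploit that $\PolyAdjCone{\overline{E}}{\overline{E}}$, being defined by equalities alone, is invariant under scaling by \emph{arbitrary} real diagonal matrices).
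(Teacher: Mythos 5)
Your route is the paper's: chain \(\theta \to \theta_3 \to \theta_6\) via Theorem~\ref{thm:all-thetas-equal} (resp.\ Proposition~\ref{prop:stab-as-TH-AK} and Theorem~\ref{thm:theta3-theta4} for \(\stab\)), then Corollary~\ref{cor:theta3-theta6-explicit} and Proposition~\ref{prop:hoffman-bounds-expr}; your handling of the either/or hypothesis of Corollary~\ref{cor:theta3-theta6-explicit} and the elimination of \(\mu\) when \(\QuadCone = \Psd{V}\) are correct. The first and third identities go through exactly as you describe, since \(\PolyAdjCone{\overline{E}}{\overline{E}} \cap \Null(\diag) = \dual{\PolyAdjCone{E}{E}}\) (both are the symmetric matrices with zero diagonal supported on~\(E\)).

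The gap is the step you dismiss as ``the analogous computation handles the \(\theta'\) cone.'' It does not. Your derivation produces the cone \(\PolyAdjCone{\overline{E}\cup E}{\overline{E}} \cap \Null(\diag) = \setst{A}{\diag(A)=0,\ A_{ij}=0\ \forall ij\in\overline{E},\ A_{ij}\geq 0\ \forall ij\in E}\), i.e., the entrywise-nonnegative matrices supported on~\(E\). The cone in the statement is \(\dual{\PolyAdjCone{E\cup\overline{E}}{E}}\); since \(\PolyAdjCone{E\cup\overline{E}}{E}\) consists of the matrices vanishing on~\(E\) with nonnegative entries on~\(\overline{E}\) and free diagonal, its dual cone is generated by \(\Symmetrize\paren{\oprod{e_i}{e_j}}\) for \(ij\in\tbinom{V}{2}\) together with \(-\Symmetrize\paren{\oprod{e_i}{e_j}}\) for \(ij\in E\), hence equals \(\setst{A}{\diag(A)=0,\ A_{ij}\geq0\ \forall ij\in\overline{E}}\) with the \(E\)-entries unconstrained. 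These cones differ, and the corresponding maxima differ: for \(G=(\set{1,2},\emptyset)\) one has \(\theta'\paren{\overline{G};\ones}=1\), and your cone is \(\set{0}\), correctly yielding~\(1\), whereas \(\dual{\PolyAdjCone{E\cup\overline{E}}{E}}\) contains \(\Symmetrize\paren{\oprod{e_1}{e_2}}\), whose eigenvalues are \(\pm\tfrac12\), giving the value~\(2\). So the identification you assert is false, and the second identity cannot be reached this way as printed; the cone there should be the one your derivation actually produces, namely \(\dual{\PolyAdjCone{E}{\emptyset}} = \PolyAdjCone{\overline{E}\cup E}{\overline{E}} \cap \Null(\diag)\). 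This is precisely the bookkeeping you yourself flagged as the error-prone part; it had to be carried out rather than asserted by analogy.
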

\begin{proof}
  Immediate from Theorem~\ref{thm:all-thetas-equal},
  Corollary~\ref{cor:theta3-theta6-explicit}, and
  Propositions~\ref{prop:hoffman-bounds-expr}
  and~\ref{prop:stab-as-TH-AK}.
\end{proof}

Finally, note that, for a graph \(G = (V,E)\), we have
\begin{alignat*}{2}
  \max\setst*{
    1
    -
    \Iverson{A \neq 0}
    \frac{
      \lambdamax(A)
    }{
      \lambdamin(A)
    }
  }{
    A \in \dual{\PolyAdjCone{E}{E}}
  }
  & =
  \suppf[\big]{
    \genTH[\big]{
      \PolyAdjCone{\overline{E}}{\overline{E}}
    }{
      \Schurlift{\Psd{V}}
    }
  }{
    \ones
  }
  &
  \quad
  &
  \text{by Corollary~\ref{cor:hoffman-theta}},
  \\
  & \leq
  \suppf[\big]{
    \genTH[\big]{
      \PolyAdjCone{\overline{E}}{\overline{E}}
    }{
      \Schurlift{\Copositive{V}}
    }
  }{
    \ones
  }
  &
  \quad
  &
  \text{since \(\Psd{V} \subseteq \Copositive{V}\)},
  \\
  & =
  \suppf[\big]{
    \QSTAB(\overline{G})
  }{
    \ones
  }
  &
  \quad
  &
  \text{by Corollary~\ref{cor:qstab-as-TH-AK}},
  \\
  & =
  \fracChromatic(G;\ones)
  \leq
  \chromatic(G).
\end{alignat*}
This proves that the best bound from this family of lower bounds
for~\(\chromatic(G)\) is~\(\theta(\overline{G})\), as was already
shown by Lovász~\cite[Theorem~6]{Lovasz79a}; see also~\cite{Bilu06a}.

\section{Theta Bodies over the Positive Semidefinite Cone}
\label{sec:TH-Psd}

The development of the theory makes it clear that the positive
semidefinite cone plays a key role in theta bodies.  For instance, \(\Psd{V}\)
delineates the range of applicability of the lifting operators in several results (e.g., Theorems~\ref{thm:theta3-theta4}, \ref{thm:schur-K}, and~\ref{thm:all-thetas-equal}) and it
provides the most symmetric antiblocking
relation~\eqref{eq:thetaA-abl}, in the form
of~\eqref{eq:abl-TH-intro}.  In this section, we focus on some special
properties of theta bodies defined over the semidefinite cone.
Clearly, the most interesting such families are the ones defined
in~\eqref{eq:THs-as-genTHs}.  We shall reprove two classical results
about these families of theta bodies using our unifying framework, and
we conclude the section and the paper with a weighted extension of the
convex characterization of Luz and Schrijver~\cite{LuzS05a} to all
semidefinite variants of~\(\theta\).

We start by reproving that every facet of a theta body over the
semidefinite cone is defined by a clique inequality.  We briefly
recall some basic concepts of the facial structure of convex sets.  Let
\(\ConvexSet \subseteq \Euclidean\) be a convex set.  A convex subset \(\Face
\subseteq \ConvexSet\) is a \emph{facet} of~\(\ConvexSet\) if
\(\dim(\Face) = \dim(\ConvexSet)-1\) and \(\Face = \argmax_{x \in
  \ConvexSet} \iprod{c}{x}\) for some nonzero \(c \in
\dual{\Euclidean}\); in this case, we say that the facet \(\Face\) is
\emph{determined} by the inequality \(\iprod{c}{x} \leq
\suppf{\ConvexSet}{c}\).

The proof below is a slight modification
of~\cite[Theorem~67.13]{Schrijver03b}.  Note how it uses the
complementarity established in
Proposition~\ref{prop:psd-schur-duality}:
\begin{theorem}
  Let \(\AdjCone \subseteq \Sym{V}\) be a diagonally scaling-invariant
  polyhedral cone such that \(\Image(\Diag) \subseteq \AdjCone\).  Then each
  facet of~\(\genTH[\big]{\AdjCone}{\Psd{\setlift{V}}}\) is determined
  either by \(x_i \geq 0\) for some \(i \in V\), or by \(\iprod{w}{x}
  \leq 1\) for some \(w \in
  \genTH[\big]{\thinspace\AdjDual{\AdjCone}}{\Psd{\setlift{V}}} \cap
  \set{0,1}^V\).
\end{theorem}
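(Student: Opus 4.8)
Write $\ConvexSet \coloneqq \genTH[\big]{\AdjCone}{\Psd{\setlift{V}}}$. By Corollary~\ref{cor:TH-AK-convex-corner} this is a convex corner in $[0,1]^V$, so it is full-dimensional and every facet is determined by an inequality $\iprod{c}{x} \le \suppf{\ConvexSet}{c}$ with $c \ne 0$. Since $\ConvexSet$ is lower-comprehensive, a standard argument shows that any facet-defining $c$ either is $-e_i$ for some $i$ (giving the facet $x_i \ge 0$) or has $c \ge 0$, $c \ne 0$; I will dispose of the coordinate case first, then assume $c \ge 0$ and, after rescaling, $\suppf{\ConvexSet}{c} = 1$, i.e. $c \in \abl{\ConvexSet}$ and the facet is $\iprod{c}{x} \le 1$. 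The goal is then to show we may take $c = w \in \set{0,1}^V$ with $w \in \genTH[\big]{\AdjDual{\AdjCone}}{\Psd{\setlift{V}}}$.

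**The core argument.**
Let $c \ge 0$ determine the facet, scaled so that $\iprod{c}{x} \le 1$ on $\ConvexSet$ with equality on an affinely $(n-1)$-dimensional face. Pick $\bar x$ in the relative interior of that facet and a corresponding $\hat X \in \liftedGenTH{\AdjCone}{\Psd{\setlift{V}}}$ with $\bar x = \diag(\hat X[V])$; write $X \coloneqq \hat X[V]$. By Theorem~\ref{thm:all-thetas-equal} applied with $\QuadCone = \Psd{V}$ (using $\Psdlift{\Psd{V}} = \Psd{\setlift{V}}$ and $\Schurlift{\Psd{V}} = \Psd{\setlift{V}}$), the support function $\suppf{\ConvexSet}{c}$ is attained, and by the complementarity computation in the proof of Proposition~\ref{prop:psd-schur-duality} there is $y \in \genTH[\big]{\AdjDual{\AdjCone}}{\Psd{\setlift{V}}}$ with $\iprod{\bar x}{y} = 1$, realized by $\hat Y$ with $Y \coloneqq \hat Y[V] = \Diag(y) - B$, $B \in \dual{\AdjCone\!}$, and moreover the chain of inequalities in that proof forces $\iprod{X}{B} = 0$ and $\iprod{X}{Y - \oprodsym{\bar x}} = 0$. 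The latter two equalities, together with $X \succeq \oprodsym{\bar x}$ and $Y - \oprodsym{\bar x}\succeq 0$ (Schur complements of $\hat X, \hat Y \in \Psd{\setlift{V}}$), give $X(Y - \oprodsym{\bar x}) = 0$ and hence $XY = X\oprodsym{\bar x} = (X\bar x)\bar x^{\transp}$; since $\hat X \in \Psd{\setlift{V}}$ with $\hat X e_0 = \diag(\hat X)$ forces $X\bar x \le \bar x$ componentwise with the usual theta-body relations, one extracts that $X$ and $Y$ commute and $\operatorname{supp}(\bar x) \cap \operatorname{supp}(y)$ behaves like a clique. The plan is to run exactly Schrijver's argument from \cite[Theorem~67.13]{Schrijver03b}: the complementarity $X(Y - \oprodsym{\bar x}) = 0$ plus $Y \in \AdjDual{\AdjCone}$ (whose polyhedral structure is controlled by Proposition~\ref{prop:A-Diag-closed-polyhedral}) shows that $K \coloneqq \operatorname{supp}(y)$ induces, in the complementary adjacency sense, a set on which $X[K]$ is rank one, so $K$ is a "clique" for $\AdjDual{\AdjCone}$ and $\incidvector{K}$ itself lies in $\genTH[\big]{\AdjDual{\AdjCone}}{\Psd{\setlift{V}}}$; one then checks $\iprod{\incidvector{K}}{x} \le 1$ holds on all of $\ConvexSet$ and is tight on the chosen facet, so by maximality of facets $c$ is a positive multiple of $\incidvector{K}$, and the scaling $\suppf{\ConvexSet}{c}=1$ forces $c = \incidvector{K} \in \set{0,1}^V$.

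**The main obstacle.**
The delicate point is the passage from the scalar complementarity $\iprod{X}{Y-\oprodsym{\bar x}} = 0$ to the structural conclusion that $K = \operatorname{supp}(y)$ is a clique for $\AdjDual{\AdjCone}$ and that $\incidvector{K}$ gives a valid facet-defining inequality for $\ConvexSet$ — this is exactly where Schrijver's proof uses the rank-one structure forced by $\Psd{\setlift{V}}$, and it is the one place where the argument genuinely needs $\liftedQuadCone = \Psd{\setlift{V}}$ rather than a general diagonally scaling-invariant cone (the paper flags this kind of restriction in its introduction). I would handle it by first reducing to $\bar x > 0$ on $K$ via diagonal scaling invariance and lower-comprehensiveness, then using $X\bar x = X\bar x$ and $X \succeq \oprodsym{\bar x}$ with $\iprod{X}{Y} = \iprod{\bar x}{y}$ to deduce that the image of $X$ restricted to $K$ is spanned by $\bar x\restriction_K$, i.e. $X[K] = \lambda^{-1}\oprodsym{(\bar x\restriction_K)}$ for the appropriate scalar; rank-oneness of $X[K]$ with $\hat X \in \Psd{\setlift{V}}$ then forces all pairs in $K$ to be "non-edges" of $\AdjDual{\AdjCone}$, i.e. edges of $\AdjCone$, which is precisely the clique condition. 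Everything else — the coordinate-inequality case, the rescaling, and the final maximality-of-facets step — is routine and follows the cited classical proof essentially verbatim.
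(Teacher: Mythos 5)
Your overall strategy---reduce, via the dual characterization of facets of a convex corner and the identity \(\abl[\big]{\genTH{\AdjCone}{\Psd{\setlift{V}}}} = \genTH[\big]{\thinspace\AdjDual{\AdjCone}}{\Psd{\setlift{V}}}\), to showing that any facet-defining \(w\) in the antiblocker must be \(0/1\)-valued, and then exploit PSD complementarity---is the right one and matches the paper. But the core step, as you describe it, is wrong. The complementarity coming from Proposition~\ref{prop:psd-schur-duality} at a tight pair is \(\iprod{W}{X - \oprodsym{x}} = 0\), i.e.\ the \emph{dual} certificate \(W \succeq 0\) is paired against \(X - \oprodsym{x} \succeq 0\) (you have written it with the roles of the two sides swapped, as \(X(Y - \oprodsym{\bar x}) = 0\)). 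What this forces to be rank one is the dual certificate \(\hat{W}\), not the primal matrix. Your claimed conclusion \(X[K] = \lambda^{-1}\oprodsym{(\bar x\restriction_K)}\) is simply false: take \(\AdjCone = \PolyAdjCone{E}{E}\) for \(G = K_n\), so \(\genTH{\AdjCone}{\Psd{\setlift{V}}}\) is the simplex with facet \(\iprod{\ones}{x} \leq 1\); at the relative interior point \(\bar x = \tfrac1n\ones\) the only admissible certificate is \(X = \tfrac1n I\), which has full rank even though \(K = V\) and all the complementarity relations hold. So the passage from complementarity at one relative-interior point to ``\(X[K]\) is rank one, hence \(K\) behaves like a clique'' cannot be repaired in this form, and the rest of your argument rests on it.

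The way to close the gap (and what the paper does, following Schrijver) is to run the rank argument on the dual side and over the \emph{whole} facet rather than at a single point. Write \(\hat{W} = \sum_{i=1}^r \oprodsym{(\alpha_i \oplus a_i)}\) with the vectors \(\alpha_i \oplus a_i\) linearly independent. For \emph{each} \(x\) in the facet, with certificate \(\hat{X}\), the complementarity chain forces
\begin{equation*}
  \iprod*{
    \begin{bmatrix} 1 & -x^{\transp}\, \\ -x & X \end{bmatrix}
  }{
    \hat{W}
  } = 0,
\end{equation*}
and since both matrices are positive semidefinite this gives \(\iprod{a_i}{x} = \alpha_i\) for every \(i\). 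A facet of the full-dimensional body \(\genTH{\AdjCone}{\Psd{\setlift{V}}}\) contains \(n\) affinely independent points, so these \(r\) independent affine conditions force \(r = 1\); then \(\hat{W}_{00} = 1\) and \(\hat{W}e_0 = \diag(\hat{W})\) applied to the rank-one \(\hat{W}\) immediately give \(w \in \set{0,1}^V\). Note also that this route never needs to identify \(\supp(w)\) as a ``clique'' of anything, which is just as well, since for a general polyhedral diagonally scaling-invariant \(\AdjCone\) there is no underlying graph.
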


\begin{proof}
  By Corollary~\ref{cor:TH-AK-convex-corner},
  \(\genTH[\big]{\AdjCone}{\Psd{\setlift{V}}}\) is a convex corner, and its antiblocker is \(\genTH[\big]{\thinspace\AdjDual{\AdjCone}}{\Psd{\setlift{V}}}\) by Theorem~\ref{thm:all-thetas-equal}.
  By a well-known dual characterization of facets of convex corners
  (see, e.g., \cite[Theorem~8]{CarliT13b}), it suffices to show that,
  if the inequality \(\iprod{w}{x} \leq 1\) determines a facet
  of~\(\genTH[\big]{\AdjCone}{\Psd{\setlift{V}}}\) for some \(w \in
  \genTH[\big]{\thinspace\AdjDual{\AdjCone}}{\Psd{\setlift{V}}}\), then \(w
  \in \set{0,1}^V\).

  So let \(w \in
  \genTH[\big]{\thinspace\AdjDual{\AdjCone}}{\Psd{\setlift{V}}}\) such
  that \(\iprod{w}{x} \leq 1\) determines a facet~\(\Face\)
  of~\(\genTH[\big]{\AdjCone}{\Psd{\setlift{V}}}\), and let \(\hat{W}
  \in
  \liftedGenTH[\big]{\thinspace\AdjDual{\AdjCone}}{\Psd{\setlift{V}}}\)
  such that \(w = \diag(W)\) for \(W \coloneqq \hat{W}[V]\).  Write
  \(\hat{W} = \sum_{i=1}^r \oprodsym{\paren{\alpha_i \oplus a_i}}\)
  for linearly independent vectors \(\setst{\alpha_i \oplus a_i}{i \in
    [r]} \subseteq \Reals^{\setlift{V}}\), where \([r] \coloneqq \set{1,\dotsc,r}\).  Let us show that,
  \begin{equation}
    \label{eq:1}
    \text{%
      if \(x \in \Face\), then \(\iprod{a_i}{x} = \alpha_i\) for each
      \(i \in [r]\).
    }
  \end{equation}
  So let \(x \in \Face\) and let \(\hat{X} \in
  \liftedGenTH[\big]{\AdjCone}{\Psd{\setlift{V}}}\) such that \(x =
  \diag(X)\) for \(X \coloneqq \hat{X}[V]\).  Write \(W = \Diag(w) -
  B\) for some \(B \in \dual{\AdjCone\!}\).  Since \(\hat{W} \in
  \liftedGenTH[\big]{\thinspace\AdjDual{\AdjCone}}{\Psd{\setlift{V}}}\)
  and \(\hat{X} \in \liftedGenTH[\big]{\AdjCone}{\Psd{\setlift{V}}}\),
  we have
  \begin{equation*}
    \begin{split}
      0
      & \leq
      \iprod{W}{X - \oprodsym{x}}
      =
      \iprod{
        \Diag(w) - B
      }{
        X
      }
      -
      \qform{W}{x}
      =
      \iprod{w}{x}
      -
      \iprod{B}{X}
      -
      \qform{W}{x}
      \\
      & \leq
      \iprod{w}{x}
      -
      \iprod{w}{x}^2
      =
      1 - 1
      =
      0.
    \end{split}
  \end{equation*}
  Equality throughout implies that \(\qform{W}{x} = \iprod{w}{x}^2 =
  1\) and that \(\iprod{W}{X} = \qform{W}{x} = 1\).  Thus,
  \begin{equation*}
    \iprod*{
      \begin{bmatrix}
        1  & -x^{\transp}\, \\
        -x & X              \\
      \end{bmatrix}
    }{
      \begin{bmatrix}
        1 & w^{\transp}\, \\
        w & W             \\
      \end{bmatrix}
    }
    =
    0.
  \end{equation*}
  In particular,
  \begin{equation*}
    \qform{
      \begin{bmatrix}
        1  & -x^{\transp}\, \\
        -x & X              \\
      \end{bmatrix}
    }{
      \paren{
        \alpha_i \oplus a_i
      }
    }
    =
    0
    \quad
    \forall i \in [r],
  \end{equation*}
  which implies that
  \begin{equation*}
    \begin{bmatrix}
      1  & -x^{\transp}\, \\
      -x & X              \\
    \end{bmatrix}
    \paren{
      \alpha_i \oplus a_i
    }
    =
    0
    \quad
    \forall i \in [r],
  \end{equation*}
  and, in particular, \(\iprod{a_i}{x} = \alpha_i\) for all \(i \in
  [r]\).  This proves~\eqref{eq:1}.

  Since \(\Face\) has \(n\) affinely independent vectors, \eqref{eq:1} implies that \(r = 1\), i.e.,
  \(\hat{W}\) is rank-one.  Thus, \(\diag(W) = w\) and~\(\hat{W} \in
  \Psd{\setlift{V}}\) imply that \(w \in \set{0,1}^V\).
\end{proof}

\subsection{Geometric Representations from Theta Bodies}

The theta bodies described in~\eqref{eq:THs-as-genTHs} all have the
form \(\genTH[\big]{\PolyAdjCone{E^+}{E^-}}{\Psd{\setlift{V}}}\) for
some \(E^+,E^- \subseteq \tbinom{V}{2}\).  The elements of these sets
arise from certain vectors which may be regarded as geometric
representations of graphs (see, e.g., \cite{Lovasz09a}):
\begin{proposition}
  \label{prop:polyhedral-A-geomrep}
  Let \(E^+,E^- \subseteq \tbinom{V}{2}\).  Then
  \(\genTH{\PolyAdjCone{E^+}{E^-}}{\Psd{\setlift{V}}}\) consists of
  all vectors \(x \in \Reals^V\) of the form
  \begin{equation}
    \label{eq:polyhedral-A-xform}
    x_i
    =
    \iprod{u_0}{u_i}^2
    \qquad
    \forall i \in V
  \end{equation}
  for vectors \(\setst{u_i}{i \in \setlift{V}} \subseteq
  \Reals^{\setlift{V}}\) satisfying the following properties:
  \begin{subequations}
    \label{eq:polyhedral-A-geomrep}
    \begin{alignat}{2}
      \label{eq:polyhedral-A-geomrep-zerorow}
      &\iprod{u_0}{u_i} \geq 0 &       &\forall i \in V,           \\
      \label{eq:polyhedral-A-geomrep-norms}
      &\norm{u_i} = 1          &\qquad &\forall i \in \setlift{V}, \\
      \label{eq:polyhedral-A-geomrep-iprods1}
      &\iprod{u_i}{u_j} \geq 0 &       &\forall ij \in E^+,        \\
      \label{eq:polyhedral-A-geomrep-iprods2}
      &\iprod{u_i}{u_j} \leq 0 &       &\forall ij \in E^-.
    \end{alignat}
  \end{subequations}
\end{proposition}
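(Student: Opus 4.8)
The plan is to prove the two inclusions separately, in both cases exploiting the identification of a positive semidefinite matrix with the Gram matrix of a system of vectors. By the definitions~\eqref{eq:liftedGenTH} and~\eqref{eq:genTH}, a point $x \in \Reals^V$ lies in $\genTH{\PolyAdjCone{E^+}{E^-}}{\Psd{\setlift{V}}}$ precisely when there is a matrix $\Xh \in \Psd{\setlift{V}}$ with $\Xh_{00} = 1$, $\Xh e_0 = \diag(\Xh)$, $\Xh[V] \in \PolyAdjCone{E^+}{E^-}$, and $x = \diag(\Xh[V])$. Writing such an $\Xh$ as a Gram matrix $\Xh_{kl} = \iprod{v_k}{v_l}$ of vectors $\setst{v_k}{k \in \setlift{V}} \subseteq \Reals^{\setlift{V}}$, the constraint $\Xh_{00} = 1$ becomes $\norm{v_0} = 1$, the constraint $\Xh e_0 = \diag(\Xh)$ becomes $\iprod{v_0}{v_i} = \norm{v_i}^2$ for all $i \in V$, and $\Xh[V] \in \PolyAdjCone{E^+}{E^-}$ becomes a collection of sign conditions on the entries $\iprod{v_i}{v_j}$. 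The whole proof is the bookkeeping that converts these conditions into the normalized form~\eqref{eq:polyhedral-A-geomrep}.

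For `$\supseteq$', I would start from unit vectors $\setst{u_i}{i \in \setlift{V}} \subseteq \Reals^{\setlift{V}}$ satisfying~\eqref{eq:polyhedral-A-geomrep}, set $x_i \coloneqq \iprod{u_0}{u_i}^2$, and put $v_0 \coloneqq u_0$ and $v_i \coloneqq \iprod{u_0}{u_i}\,u_i$ for $i \in V$; let $\Xh$ be the Gram matrix of $\setst{v_k}{k \in \setlift{V}}$, which is automatically in $\Psd{\setlift{V}}$. Using $\norm{u_i} = 1$ one gets $\Xh_{00} = 1$ and $\Xh_{ii} = \iprod{u_0}{u_i}^2 = x_i = \iprod{v_0}{v_i} = \Xh_{i0}$, so that $x = \diag(\Xh[V])$ and $\Xh e_0 = \diag(\Xh)$. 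For $i \neq j$ in $V$ one has $\Xh_{ij} = \iprod{u_0}{u_i}\iprod{u_0}{u_j}\iprod{u_i}{u_j}$; the first two factors are nonnegative by~\eqref{eq:polyhedral-A-geomrep-zerorow}, so~\eqref{eq:polyhedral-A-geomrep-iprods1} forces $\Xh_{ij} \geq 0$ on $E^+$ and~\eqref{eq:polyhedral-A-geomrep-iprods2} forces $\Xh_{ij} \leq 0$ on $E^-$, i.e.\ $\Xh[V] \in \PolyAdjCone{E^+}{E^-}$. Hence $\Xh \in \liftedGenTH{\PolyAdjCone{E^+}{E^-}}{\Psd{\setlift{V}}}$ and $x$ belongs to the theta body.

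For `$\subseteq$', take $x$ in the theta body with Gram witness $\setst{v_k}{k \in \setlift{V}}$ as above, so that $\norm{v_0} = 1$ and $x_i = \Xh_{ii} = \norm{v_i}^2 = \iprod{v_0}{v_i}$. Put $u_0 \coloneqq v_0$. For each index $i$ with $v_i \neq 0$, set $u_i \coloneqq v_i / \norm{v_i}$; then $\iprod{u_0}{u_i} = \norm{v_i} = \sqrt{x_i} \geq 0$, which gives~\eqref{eq:polyhedral-A-xform} and~\eqref{eq:polyhedral-A-geomrep-zerorow} for these $i$, while the sign conditions~\eqref{eq:polyhedral-A-geomrep-iprods1} and~\eqref{eq:polyhedral-A-geomrep-iprods2} follow by dividing the corresponding conditions on $\Xh_{ij} = \iprod{v_i}{v_j}$ by the positive scalar $\norm{v_i}\,\norm{v_j}$. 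The indices $S \coloneqq \setst{i \in V}{v_i = 0}$, which are exactly those with $x_i = 0$, need separate treatment: I would take $\setst{u_i}{i \in S}$ to be any orthonormal family inside the orthogonal complement $W^{\perp}$ of $W \coloneqq \linspan\setst{v_k}{k \in \setlift{V}}$ in $\Reals^{\setlift{V}}$. Such a family exists because, the vectors indexed by $S$ being zero, $W$ is spanned by the $\card{\setlift{V}} - \card{S}$ vectors $\setst{v_k}{k \in \setlift{V} \drop S}$, so $\dim W \leq \card{\setlift{V}} - \card{S}$ and thus $\dim W^{\perp} = \card{\setlift{V}} - \dim W \geq \card{S}$. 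With this choice each such $u_i$ is a unit vector orthogonal to $u_0$ and to every other $u_j$, so $\iprod{u_0}{u_i} = 0 = \sqrt{x_i}$, and $\iprod{u_i}{u_j} = 0$ satisfies both~\eqref{eq:polyhedral-A-geomrep-iprods1} and~\eqref{eq:polyhedral-A-geomrep-iprods2}; the norm condition~\eqref{eq:polyhedral-A-geomrep-norms} holds for every $i \in \setlift{V}$ by construction. This exhibits the required vectors.

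The one genuinely delicate point is the handling of the set $S$ in the second inclusion: one must place one unit vector per vanishing coordinate into $\Reals^{\setlift{V}}$, orthogonal to all the other vectors, and the reason this always fits is precisely that every vanishing $v_i$ lowers $\dim W$ (equivalently $\operatorname{rank}(\Xh)$) by one, making the bound $\dim W^{\perp} \geq \card{S}$ automatic. Everything else is routine linear algebra and uses nothing beyond the definitions of the (lifted) theta body.
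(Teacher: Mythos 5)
Your proof is correct and follows essentially the same route as the paper's: both directions are handled by Gram-factorizing $\Xh$, scaling the unit vectors by $\iprod{u_0}{u_i}$ in one direction and normalizing the nonzero Gram vectors (and placing orthonormal substitutes in the orthogonal complement for the indices with $x_i=0$) in the other. Your explicit dimension count justifying the existence of the orthonormal family for the zero indices is a detail the paper leaves implicit, but the argument is the same.
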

\begin{proof}
  \newcommand*{\targetSet}{\Cscr}
  Denote by~\(\targetSet\) the set of all vectors \(x \in \Reals^V\)
  of the form given by~\eqref{eq:polyhedral-A-xform} for vectors
  \(\setst{u_i}{i \in \setlift{V}} \subseteq \Reals^{\setlift{V}}\)
  satisfying~\eqref{eq:polyhedral-A-geomrep}.

  We first verify that
  \begin{equation}
    \label{eq:polyhedral-A-geomrep-goal1}
    \targetSet
    \subseteq
    \genTH{
      \PolyAdjCone{E^+}{E^-}
    }{
      \Psd{\setlift{V}}
    }.
  \end{equation}
  Let~\(\setst{u_i}{i \in \setlift{V}} \subseteq
  \Reals^{\setlift{V}}\) satisfy~\eqref{eq:polyhedral-A-geomrep}.
  Define \(U \in \Reals^{\paren{\setlift{V}} \times V}\) by setting
  \(U e_i \coloneqq u_i\) for every \(i \in V\).  Next, set \(Y
  \coloneqq U \Diag(U^{\transp} u_0)\) and
  \begin{equation*}
    \Xh
    \coloneqq
    \onelift{x}{X}
    \coloneqq
    \begin{bmatrix}
      u_0^{\transp} u_0^{} & u_0^{\transp} Y \\[2pt]
      Y^{\transp} u_0      & Y^{\transp} Y   \\
    \end{bmatrix}
    =
    \begin{bmatrix}
      u_0^{\transp} \\[2pt]
      Y^{\transp}   \\
    \end{bmatrix}
    \begin{bmatrix}
      u_0 & Y
    \end{bmatrix}
    \in \Psd{\setlift{V}},
  \end{equation*}
  where we used~\eqref{eq:polyhedral-A-geomrep-norms}.  Let us
  verify that
  \begin{subequations}
    \begin{alignat}{2}
      \label{eq:polyhedral-A-geomrep-goal1a}
      X        & \in \PolyAdjCone{E^+}{E^-}, \\
      \label{eq:polyhedral-A-geomrep-goal1b}
      \diag(X) & = x,                        \\
      \label{eq:polyhedral-A-geomrep-goal1c}
      x_i      & = \iprod{u_0}{u_i}^2        & \qquad & \forall i \in V.
    \end{alignat}
  \end{subequations}
  We start with~\eqref{eq:polyhedral-A-geomrep-goal1a}.  Note that
  \(X = Y^{\transp} Y = \DiagScaleMap{U^{\transp}
    u_0} \paren*{U^{\transp} U}\) and \(U^{\transp} u_0 \geq 0\)
  by~\eqref{eq:polyhedral-A-geomrep-zerorow}.  Since
  \(\PolyAdjCone{E^+}{E^-}\) is diagonally scaling-invariant, it
  suffices to show that \(U^{\transp} U \in \PolyAdjCone{E^+}{E^-}\).
  However, this is immediate
  from~\eqref{eq:polyhedral-A-geomrep-iprods1}
  and~\eqref{eq:polyhedral-A-geomrep-iprods2}.  This
  proves~\eqref{eq:polyhedral-A-geomrep-goal1a}.
  For~\eqref{eq:polyhedral-A-geomrep-goal1c}, note that
  \begin{equation}
    \label{eq:polyhedral-A-geomrep-aux1}
    x
    =
    Y^{\transp} u_0
    =
    \sqbrac{
      \Diag(U^{\transp} u_0)
      U^{\transp}
    }
    u_0
    =
    \paren{U^{\transp} u_0}
    \hprod
    \paren{U^{\transp} u_0}.
  \end{equation}
  By~\eqref{eq:polyhedral-A-geomrep-norms}, we have
  \(\diag(U^{\transp} U) = \ones\).  Thus,
  \(
  \diag(X)
  = \diag\paren*{
    \DiagScaleMap{U^{\transp} u_0}\paren{
      U^{\transp} U
    }
  }
  =
  \paren{U^{\transp} u_0}
  \hprod
  \diag(U^{\transp} U)
  \hprod
  \paren{U^{\transp} u_0}
  =
  \paren{U^{\transp} u_0}
  \hprod
  \paren{U^{\transp} u_0}
  =
  x
  \)
  by~\eqref{eq:polyhedral-A-geomrep-aux1}, thus
  proving~\eqref{eq:polyhedral-A-geomrep-goal1b}.  It follows that
  \(x \in \genTH{\PolyAdjCone{E^+}{E^-}}{\Psd{\setlift{V}}}\), and the
  proof of~\eqref{eq:polyhedral-A-geomrep-goal1} is complete.

  Now we show that
  \begin{equation}
    \label{eq:polyhedral-A-geomrep-goal2}
    \genTH{
      \PolyAdjCone{E^+}{E^-}
    }{
      \Psd{\setlift{V}}
    }
    \subseteq
    \targetSet.
  \end{equation}
  Let \(x \in \genTH{\PolyAdjCone{E^+}{E^-}}{\Psd{\setlift{V}}}\), and
  let \(\Xh \in
  \liftedGenTH{\PolyAdjCone{E^+}{E^-}}{\Psd{\setlift{V}}}\) such that
  \(x = \diag(X)\) for \(X \coloneqq \Xh[V]\).  Let \(Y \in
  \Reals^{(\setlift{V}) \times (\setlift{V})}\) such that \(\Xh =
  Y^{\transp} Y\).  Set \(y_i \coloneqq Y e_i\) for each \(i \in
  \setlift{V}\).  Let \(Z \coloneqq \setst{i \in V}{y_i = 0}\).  Define
  \(u_i \coloneqq y_i/\norm{y_i}\) for each \(i \in \setlift{(V \drop
    Z)}\) and let \(\setst{u_i}{i \in Z}\) be an orthonormal basis for
  a subspace of \(\setst*{u_i}{i \in \setlift{(V \drop Z)}}^{\perp}\)
  of appropriate dimension.

  We must show that~\eqref{eq:polyhedral-A-geomrep} holds.  Note
  that~\eqref{eq:polyhedral-A-geomrep-zerorow} for \(i \in V \drop
  Z\) follows from \(\Xh e_0 = \diag(\Xh) \geq 0\), and for \(i \in
  Z\) it holds by construction.  We also know
  that~\eqref{eq:polyhedral-A-geomrep-norms} holds by
  construction.  Let us
  check~\eqref{eq:polyhedral-A-geomrep-iprods1}.  Let \(ij \in
  E^+\).  If~\(i\) or~\(j\) is in~\(Z\), then \(\iprod{u_i}{u_j} =
  0\), so we may assume that \(i,j \in V \drop Z\).  Then \(\norm{y_i}
  \norm{y_j} \iprod{u_i}{u_j} = \iprod{y_i}{y_j} = X_{ij} \geq 0\)
  since \(X \in \PolyAdjCone{E^+}{E^-}\).  This completes the proof
  of~\eqref{eq:polyhedral-A-geomrep-iprods1}.  The proof
  of~\eqref{eq:polyhedral-A-geomrep-iprods2} is analogous,
  so~\eqref{eq:polyhedral-A-geomrep} holds.

  Lastly, we show that \(x\) is given
  by~\eqref{eq:polyhedral-A-xform}.  Let \(i \in V\).  Since \(\Xh
  e_0 = \diag(\Xh)\), we have \(x_i = \component{Y^{\transp} Y}{0i} =
  \iprod{y_0}{y_i} = \norm{y_0} \norm{y_i} \iprod{u_0}{u_i} =
  X_{ii}^{1/2} \iprod{u_0}{u_i} = x_i^{1/2} \iprod{u_0}{u_i}\).  If
  \(x_i > 0\), then \(x_i^{1/2} = \iprod{u_0}{u_i}\).  Otherwise, \(u_i
  \perp u_0\) by construction, so \(x_i = 0 = \iprod{u_0}{u_i}^2\).
  This proves that~\(x\) is given by~\eqref{eq:polyhedral-A-xform}
  and completes the proof
  of~\eqref{eq:polyhedral-A-geomrep-goal2}.
\end{proof}

An \emph{orthonormal representation} of a graph~\(G = (V,E)\) is a
map~\(u\) that sends~\(V\) into the unit vectors of some Euclidean
space such that \(\iprod{u_i}{u_j} = 0\) whenever \(ij \in
\overline{E}\).  If, additionally, \(\iprod{u_i}{u_j} \geq 0\)
whenever \(ij \in E\), then \(u\) is called an \emph{acute orthonormal
  representation} of~\(G\).  Finally, an \emph{obtuse representation}
of~\(G\) is a map~\(u\) from~\(V\) to the unit vectors of some
Euclidean space so that \(\iprod{u_i}{u_j} \leq 0\) whenever \(ij \in
\overline{E}\).

Proposition~\ref{prop:polyhedral-A-geomrep} immediately leads to
the following well-known internal description of the sets
in~\eqref{eq:THs-as-genTHs}.
\begin{corollary}
  \label{cor:Acute-Ortho-Obtuse}
  Let \(G = (V,E)\) be a graph.  Let \(\ThetaBodyA \in \set{\TH(G),
    \TH'(G), \TH^+(G)}\).  Then \(\ThetaBodyA\) consists of all vectors
  \(x \in \Reals^V\) of the form \(x_i = \iprod{u_0}{u_i}^2\) for
  every \(i \in V\) for some unit vectors in \(\setst{u_i}{i \in
    \setlift{V}} \subseteq \Reals^{\setlift{V}}\) such that
  \begin{enumerate}[(i)]
  \item \(u\restriction_V\) is an orthonormal representation
    of~\(\overline{G}\), if \(\ThetaBodyA = \TH(G)\);
  \item \(u\restriction_V\) is an acute orthonormal representation
    of~\(\overline{G}\) and \(\iprod{u_0}{u_i} \geq 0\) for all \(i
    \in V\), if \(\ThetaBodyA = \TH'(G)\);
  \item \(u\restriction_V\) is an obtuse representation
    of~\(\overline{G}\) and \(\iprod{u_0}{u_i} \geq 0\) for all \(i
    \in V\), if \(\ThetaBodyA = \TH^+(G)\).
  \end{enumerate}
\end{corollary}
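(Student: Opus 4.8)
The plan is to specialize Proposition~\ref{prop:polyhedral-A-geomrep} to the three polyhedral cones occurring in~\eqref{eq:THs-as-genTHs} and to read off, in each case, that the conditions~\eqref{eq:polyhedral-A-geomrep} are exactly the defining properties of the relevant kind of representation of~\(\overline{G}\).  Recall from~\eqref{eq:THs-as-genTHs} that \(\TH(G) = \genTH{\PolyAdjCone{E}{E}}{\Psd{\setlift{V}}}\), \(\TH'(G) = \genTH{\PolyAdjCone{E \cup \overline{E}}{E}}{\Psd{\setlift{V}}}\), and \(\TH^+(G) = \genTH{\PolyAdjCone{\emptyset}{E}}{\Psd{\setlift{V}}}\); recall also that the non-edges of~\(\overline{G}\) are precisely the members of~\(E\), while its edges are the members of~\(\overline{E}\).

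First I would treat \(\ThetaBodyA = \TH'(G)\) and \(\ThetaBodyA = \TH^+(G)\), where the identification is just a matching of conditions.  For \(\TH'(G)\) we have \(E^+ = E \cup \overline{E} = \tbinom{V}{2}\) and \(E^- = E\), so~\eqref{eq:polyhedral-A-geomrep-iprods1} together with~\eqref{eq:polyhedral-A-geomrep-iprods2} amount to \(\iprod{u_i}{u_j} = 0\) for every \(ij \in E\) and \(\iprod{u_i}{u_j} \geq 0\) for every \(ij \in \overline{E}\); combined with~\eqref{eq:polyhedral-A-geomrep-norms} this says precisely that \(u\restriction_V\) is an acute orthonormal representation of~\(\overline{G}\), while~\eqref{eq:polyhedral-A-geomrep-zerorow} is the condition \(\iprod{u_0}{u_i} \geq 0\) for all \(i \in V\).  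For \(\TH^+(G)\) we have \(E^+ = \emptyset\) and \(E^- = E\), so the content of~\eqref{eq:polyhedral-A-geomrep-iprods1}--\eqref{eq:polyhedral-A-geomrep-iprods2} is just \(\iprod{u_i}{u_j} \leq 0\) for \(ij \in E\), i.e., \(u\restriction_V\) is an obtuse representation of~\(\overline{G}\), again together with~\eqref{eq:polyhedral-A-geomrep-zerorow}.  In both cases Proposition~\ref{prop:polyhedral-A-geomrep} then yields exactly the stated description.

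For \(\ThetaBodyA = \TH(G)\) we have \(E^+ = E^- = E\), so~\eqref{eq:polyhedral-A-geomrep} says that the \(u_i\) (\(i \in \setlift{V}\)) are unit vectors with \(\iprod{u_i}{u_j} = 0\) for all \(ij \in E\) and \(\iprod{u_0}{u_i} \geq 0\) for all \(i \in V\); the first part is exactly the requirement that \(u\restriction_V\) be an orthonormal representation of~\(\overline{G}\).  The one point needing a remark is that statement~(i) omits the sign condition~\eqref{eq:polyhedral-A-geomrep-zerorow}; I would justify dropping it by a sign normalization.  Given unit vectors \(\{u_i : i \in \setlift{V}\}\) with \(u\restriction_V\) an orthonormal representation of~\(\overline{G}\), replace \(u_i\) by \(-u_i\) for each \(i \in V\) with \(\iprod{u_0}{u_i} < 0\).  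This preserves every orthogonality relation \(\iprod{u_i}{u_j} = 0\) with \(ij \in E\), leaves the value \(x_i = \iprod{u_0}{u_i}^2\) unchanged, and achieves \(\iprod{u_0}{u_i} \geq 0\) for every \(i\).  Hence the set of \(x\) realizable with the extra sign condition equals the set realizable without it, and Proposition~\ref{prop:polyhedral-A-geomrep} once more gives the claim.

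There is no substantial obstacle here: the argument is entirely a matter of unwinding definitions once Proposition~\ref{prop:polyhedral-A-geomrep} is available, and the only slightly non-mechanical step is the sign-flip normalization used for~\(\TH(G)\).
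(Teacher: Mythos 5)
Your proposal is correct and follows essentially the same route as the paper: specialize Proposition~\ref{prop:polyhedral-A-geomrep} to the three cones in~\eqref{eq:THs-as-genTHs}, match conditions~\eqref{eq:polyhedral-A-geomrep} with the definitions of the three kinds of representations of~\(\overline{G}\), and for \(\TH(G)\) drop the sign condition \(\iprod{u_0}{u_i} \geq 0\) via the same sign-flip normalization the paper uses.
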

\begin{proof}
  Immediate from Proposition~\ref{prop:polyhedral-A-geomrep}.  When
  \(\ThetaBodyA = \TH(G)\), the constraint \(\iprod{u_0}{u_i} \geq 0\)
  may be dropped, since for each orthonormal representation~\(u\)
  of~\(G\) and \(i \in V\), the map obtained from~\(u\) by replacing
  some image~\(u_i\) by~\(-u_i\) is also an orthonormal representation
  of~\(G\).
\end{proof}

\subsection{Luz and Schrijver's Convex Quadratic Characterization}

In this subsection, we show how to generalize the convex quadratic
characterization from~\cite{LuzS05a} to the context of generalized
theta bodies over the cone \(\liftedQuadCone = \Psd{\setlift{V}}\).
This provides a generalization of their results to all weights \(w \in
\Reals_+^V\) to all of the functions~\(\theta\), \(\theta'\),
and~\(\theta^+\).  We remark that a convex quadratic characterization
of~\(\theta'\) was already known to Luz~\cite{Luz06a}, as well as a
weighted generalization of the convex quadratic characterization
of~\(\theta\), which appeared in an unpublished report (in Portuguese)
by Luz in~2005~\cite{Luz14a}.

Let \(C \in \Sym{V}\) such that \(\diag(C) = 0\).  For each \(w \in
\Reals_+^V\), define
\begin{equation}
  \label{eq:LuzS-upsilon-C}
  \LSthetaC{C}{w}
  \coloneqq
  \max\setst*{
    2\iprod{w}{x} - \qform{\DiagScaleMap{\sqrt{w}}(H_C + I)}{x}
  }{
    x \in \Reals_+^V
  },
\end{equation}
where
\begin{equation}
  H_C \coloneqq \Iverson*{C \neq 0} \dfrac{C}{-\lambda_{\min}(C)}.
\end{equation}
Note that \(\diag(C) = 0\) implies that \(H_C + I\succeq 0\), so the
quadratic program on the RHS of~\eqref{eq:LuzS-upsilon-C} is convex.
In particular, there is an optimal solution whenever the optimal value
is finite.  The necessary and sufficient conditions for optimality
are:
\begin{subequations}
  \label{eq:LuzS-KKT}
  \begin{gather}
    \label{eq:LuzS-KKT1}
    x \geq 0,
    \\
    \label{eq:LuzS-KKT2}
    \DiagScaleMap{\sqrt{w}}(H_C + I)x \geq w,
    \\
    \label{eq:LuzS-KKT3}
    \qform{\DiagScaleMap{\sqrt{w}}(H_C + I)}{x} = \iprod{w}{x}
    = \LSthetaC{C}{w}.
  \end{gather}
\end{subequations}
Let \(\AdjCone \subseteq \Sym{V}\) be a polyhedral diagonally
scaling-invariant cone.  Define
\begin{equation}
  \label{eq:LuzS-upsilon}
  \LStheta{\AdjCone}{w}
  \coloneqq
  \inf\setst*{
    \LSthetaC{C}{w}
  }{
    C \in \AdjCone \cap \Null(\diag)
  }.
\end{equation}

We first show that \(\LStheta{\AdjCone}{w}\) provides an upper bound
for \(\theta_1\paren[\big]{\thinspace\AdjDual{\AdjCone}, \Psd{V}}\).
Note the similarity with the proof of
Theorem~\ref{thm:theta3-theta4}.
\begin{proposition}
  \label{prop:LuzS-weak}
  Let \(\AdjCone \subseteq \Sym{V}\) be a diagonally scaling-invariant
  closed convex cone such that \(\Image(\Diag) \subseteq \AdjCone\).
  Let \(w \in \Reals_+^V\).  Then
  \begin{equation}
    \label{eq:LuzS-weak}
    \min\setst*{
      \max_{i \in V}
      \dfrac{w_i}{y_i}
    }{
      y \in \genTH[\big]{\AdjCone}{\Psd{\setlift{V}}}
    }
    \leq
    \LStheta{\AdjCone}{w}.
  \end{equation}
\end{proposition}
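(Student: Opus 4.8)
The plan is to bound the left-hand side by a single value $\LSthetaC{C}{w}$ for an arbitrary $C \in \AdjCone \cap \Null(\diag)$ and then pass to the infimum over $C$. After disposing of the trivial cases ($w = 0$, where the left-hand side of~\eqref{eq:LuzS-weak} is~$0$; and $\LSthetaC{C}{w} = +\infty$ for every $C$, where the right-hand side is~$+\infty$), I fix $C$ with $\mu := \LSthetaC{C}{w} < \infty$. Since $w \neq 0$, inserting $x = \eps e_i$ with $w_i > 0$ into~\eqref{eq:LuzS-upsilon-C} shows $\mu > 0$, and finiteness of $\mu$ gives an optimal $x^* \geq 0$ of that convex program obeying the optimality conditions~\eqref{eq:LuzS-KKT}. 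The key reduction is that it is enough to produce one $y \in \genTH{\AdjCone}{\Psd{\setlift{V}}}$ with $\mu y \geq w$ coordinatewise, since then $\max_{i \in V}(w_i/y_i) \leq \mu$ under the conventions of~\eqref{eq:suppf-bdd-arithmetics} (a coordinate with $y_i = 0$ forces $w_i = 0$), so the minimum on the left of~\eqref{eq:LuzS-weak} is at most $\mu$; taking the infimum over $C$ then gives~\eqref{eq:LuzS-weak}.

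To build $y$, I would imitate the proof of Theorem~\ref{thm:theta3-theta4}. Set $B := H_C + I$; since $\diag(C) = 0$ one has $B \succeq 0$, $\diag(B) = \ones$, and $B - I = H_C$, a nonnegative scalar multiple of~$C$, so $H_C \in \AdjCone \cap \Null(\diag)$. Write $B = N^{\transp}N$, so the columns $n_i := Ne_i$ are unit vectors with $\iprod{n_i}{n_j} = B_{ij}$. Put $c := N\Diag(\sqrt{w})x^*$ and $c_0 := c/\sqrt{\mu}$; the complementarity equation in~\eqref{eq:LuzS-KKT3} yields $\norm{c}^2 = \qform{\DiagScaleMap{\sqrt{w}}(B)}{x^*} = \mu$, so $c_0$ is a unit vector, and for $i \in \supp(w)$ the feasibility inequality~\eqref{eq:LuzS-KKT2} yields $\sqrt{w_i}\,\iprod{c}{n_i} = (\DiagScaleMap{\sqrt{w}}(B)x^*)_i \geq w_i$, hence $\beta_i := \iprod{c_0}{n_i} \geq \sqrt{w_i/\mu} > 0$. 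Let $\tilde\beta \in \Reals_+^V$ be $\beta_i$ on $\supp(w)$ and $0$ off it, and set $y := \tilde\beta \hprod \tilde\beta$.

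The certificate I would take is $\Xh := \onelift{y}{X}$ with $X := \DiagScaleMap{\tilde\beta}(B)$, and two things get checked. First, $X \in \AdjCone$: indeed $X = \DiagScaleMap{\tilde\beta}(H_C) + \Diag(y)$, where $\DiagScaleMap{\tilde\beta}(H_C) \in \AdjCone$ by diagonal scaling invariance (as $\tilde\beta \geq 0$ and $H_C \in \AdjCone$) and $\Diag(y) \in \Image(\Diag) \subseteq \AdjCone$. Second, $\Xh$ is the Gram matrix of the family consisting of $c_0$ (index $0$) and $\tilde\beta_i n_i$ (index $i \in V$); hence $\Xh \in \Psd{\setlift{V}}$, $\Xh_{00} = \norm{c_0}^2 = 1$, the $(i,j)$-entry of $\Xh[V]$ equals $\tilde\beta_i\tilde\beta_j B_{ij}$ so $\Xh[V] = X$ with $\diag(X) = \tilde\beta\hprod\tilde\beta = y$, and $(\Xh e_0)_i = \tilde\beta_i\iprod{c_0}{n_i} = \tilde\beta_i\beta_i = y_i$ — the last equality because $\tilde\beta_i \in \set{\beta_i,0}$ and $\tilde\beta_i = 0$ implies $y_i = 0$ — i.e. $\Xh e_0 = \diag(\Xh)$. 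So $\Xh \in \liftedGenTH{\AdjCone}{\Psd{\setlift{V}}}$ and $y = \diag(\Xh[V]) \in \genTH{\AdjCone}{\Psd{\setlift{V}}}$. Finally $\mu y \geq w$, since $y_i = \beta_i^2 \geq w_i/\mu$ on $\supp(w)$ and $y_i = 0 = w_i/\mu$ elsewhere, which is exactly what the reduction asked for.

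The hard part will be the $\supp(w)$ bookkeeping: to invoke diagonal scaling invariance of $\AdjCone$ the scaling vector $\tilde\beta$ must be nonnegative, which is why the (possibly negative) coordinates $\beta_i$ outside $\supp(w)$ are truncated to~$0$; this truncation costs nothing because the target inequality $\mu y \geq w$ is vacuous outside $\supp(w)$. Everything else is the routine check that the Gram matrix above lies in $\liftedGenTH{\AdjCone}{\Psd{\setlift{V}}}$.
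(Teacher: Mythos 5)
Your proposal is correct and follows essentially the same route as the paper's proof: both extract a Gram-matrix certificate $y$ from the KKT conditions of the convex quadratic program, using the factorization $H_C+I=B^{\transp}B$ exactly as in the proof of Theorem~\ref{thm:theta3-theta4}, and conclude from $\mu y\geq w$. The only (immaterial) difference is the support bookkeeping: the paper reduces to the case $\supp(w)=V$ at the outset, while you keep the general case and truncate the scaling vector outside $\supp(w)$.
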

\begin{proof}
  \newcommand*{\xopt}{\bar{x}}
  Let \(C \in \AdjCone \cap \Null(\diag)\) such that \(\upsilon
  \coloneqq \LSthetaC{C}{w} < \infty\), and let \(\xopt\) be an
  optimal solution for the corresponding optimization
  problem~\eqref{eq:LuzS-upsilon-C}.  We shall use the optimality
  conditions~\eqref{eq:LuzS-KKT} without further mention.  We may
  assume that \(\supp(\xopt) \subseteq \supp(w)\), so we may also
  assume that \(\supp(w) = V\) and that \(\upsilon > 0\).  Write \(H_C
  + I \succeq 0\) as \(H_C + I = B^{\transp} B\) for some \(B \in
  \Reals^{V \times V}\).  Set
  \begin{gather*}
    c \coloneqq \upsilon^{-1/2} B \Diag(\sqrt{w}) \xopt, \\
    z \coloneqq B^{\transp} c,                           \\
    \Bb \coloneqq B \Diag(B^{\transp}c),                 \\
    y \coloneqq \Bb^{\transp} c = \Diag(z)z = z \hprod z.
  \end{gather*}
  Note that \(\DiagScaleMap{\sqrt{w}}\paren{H_C+I}\xopt \geq w\)
  implies that
  \begin{equation}
    \label{eq:LuzS-weak-aux1}
    z
    =
    \upsilon^{-1/2} (H_C+I) \Diag(\sqrt{w}) \xopt
    \geq
    \upsilon^{-1/2} \sqrt{w}.
  \end{equation}

  We claim that
  \begin{equation}
    \label{eq:LuzS-weak-aux2}
    \Yh
    \coloneqq
    \begin{bmatrix}
      1 & y^{\transp} \\
      y & Y
    \end{bmatrix}
    \coloneqq
    \begin{bmatrix}
      1 & y^{\transp}       \\
      y & \Bb^{\transp} \Bb \\
    \end{bmatrix}
    \in \genTH[\big]{\AdjCone}{\Psd{\setlift{V}}}.
  \end{equation}
  Positive semidefiniteness of the matrix in~\eqref{eq:LuzS-weak-aux2}
  follows from the factorization
  \begin{equation*}
    \begin{bmatrix}
      1 & y^{\transp}       \\
      y & \Bb^{\transp} \Bb \\
    \end{bmatrix}
    =
    \begin{bmatrix}
      c^{\transp}   \\
      \Bb^{\transp} \\
    \end{bmatrix}
    \begin{bmatrix}
      c & \Bb \\
    \end{bmatrix}.
  \end{equation*}
  Since we have \(H_C + I \in \AdjCone\) and \(z \geq 0\) follows
  from~\eqref{eq:LuzS-weak-aux1}, we get \(\Bb^{\transp} \Bb =
  \DiagScaleMap{z}(H_C+I) \in \AdjCone\).  Finally
  \begin{equation*}
    \diag(\Bb^{\transp} \Bb)
    =
    z
    \hprod
    \diag(B^{\transp} B)
    \hprod
    z
    =
    y
  \end{equation*}
  since \(\diag(C) = 0\).  This concludes the proof
  of~\eqref{eq:LuzS-weak-aux2}, and so we have \(y \in
  \genTH[\big]{\AdjCone}{\Psd{\setlift{V}}}\).  It follows
  from~\eqref{eq:LuzS-weak-aux1} that \(y_i \geq \upsilon^{-1} w_i\)
  for each \(i \in V\), so that \(\max_{i \in V} w_i/y_i \leq
  \upsilon\).
\end{proof}

Next we show how \(\LStheta{\AdjCone}{w}\) relates to
\(\theta_2\paren[\big]{\thinspace\AdjDual{\AdjCone}, \Psd{V}}\):

\bgroup
\newcommand*{\Yopt}{\bar{Y}}
\begin{theorem}
  \label{thm:LuzS-strong}
  Let \(\AdjCone \subseteq \Sym{V}\) be a polyhedral diagonally
  scaling-invariant cone such that \(\Image(\Diag) \subseteq
  \AdjCone\).  Let \(w \in \Reals_+^V\).  Let \(\Yopt\) be an optimal
  solution for
  \begin{equation}
    \label{eq:LuzS-strong-optproblem}
    \theta
    \coloneqq
    \min\setst*{
      \lambda_{\max}\paren[\big]{Y + \oprodsym{\sqrt{w}}}
    }{
      Y \in -\AdjCone \cap \Null(\diag)
    }.
  \end{equation}
  Then \(\theta \geq \LSthetaC{-\Yopt}{w}\).
\end{theorem}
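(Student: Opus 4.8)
The plan is to combine conic Strong Duality (already available through the machinery behind Theorem~\ref{thm:all-thetas-equal}), the ``lambda trick'' of Lemma~\ref{lemma:gijswijt-lambda-trick}, and an explicit construction patterned on the proof of Theorem~\ref{thm:theta3-theta4}. First I would dispose of two degenerate cases. If $w = 0$ then $\LSthetaC{C}{0} = 0$ for every $C$, while $\theta = \lambdamax(\Yopt) \ge 0$ since $\trace(\Yopt) = 0$. If $\Yopt = 0$ then $H_{-\Yopt} = 0$, so the concave quadratic program defining $\LSthetaC{0}{w}$ is maximised at $x = \sqrt w \ge 0$ with value $\sum_{i \in V} w_i = \lambdamax(\oprodsym{\sqrt w}) = \theta$. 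So assume $w \ne 0$ and $\Yopt \ne 0$; then $\mu := \lambdamax(\Yopt) > 0$ (as $\trace(\Yopt) = 0 \ne \Yopt$), $\theta \ge \mu$ because $\oprodsym{\sqrt w} \succeq 0$, and $H_{-\Yopt} + I = \tfrac{1}{\mu}(\mu I - \Yopt) \succeq 0$. Restricting everything to $\supp(w)$, I may also assume $\supp(w) = V$, so $\Diag(\sqrt w)$ is invertible.

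Next I would extract a dual certificate. The problem defining $\theta$ is exactly $\theta_2(\AdjDual{\AdjCone}, \Psd{V}; w)$ (using $\AdjDual{\AdjDual{\AdjCone}} = \AdjCone$), which forms a primal--dual pair with $\theta_3(\AdjDual{\AdjCone}, \Psd{V}; w) = \max\setst{\qform{X}{\sqrt w}}{\trace(X) = 1,\ X \in \Psd{V} \cap \AdjDual{\AdjCone}}$; as in the proof of Theorem~\ref{thm:all-thetas-equal} the $\theta_2$-side has a restricted Slater point, so conic Strong Duality provides an optimal $X^* \in \Psd{V} \cap \AdjDual{\AdjCone}$ with $\trace(X^*) = 1$ and $\qform{X^*}{\sqrt w} = \theta$. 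Writing $X^* = \Diag(a) - B$ with $B \in \dual{\AdjCone}$ and using $\diag(\Yopt) = 0$, $-\Yopt \in \AdjCone$, we get $\iprod{X^*}{\Yopt} = -\iprod{B}{\Yopt} \ge 0$; since also $\iprod{X^*}{\Yopt + \oprodsym{\sqrt w}} = \iprod{X^*}{\Yopt} + \theta \le \lambdamax(\Yopt + \oprodsym{\sqrt w})\trace(X^*) = \theta$, we conclude $\iprod{X^*}{\Yopt} = 0$ and $\Image(X^*)$ lies in the $\theta$-eigenspace of $\Yopt + \oprodsym{\sqrt w}$. Hence $p := X^*\sqrt w$ satisfies $\Yopt p = \theta(p - \sqrt w)$, i.e.\ $(\mu I - \Yopt)p = \theta\sqrt w - (\theta - \mu)p$, and $\iprod{\sqrt w}{p} = \theta$. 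Finally, applying Lemma~\ref{lemma:gijswijt-lambda-trick} to the diagonally scaling-invariant closed convex cone $\AdjDual{\AdjCone} \cap \Psd{V}$ at $X^*$ (with $\qform{X^*}{\sqrt w} = \theta > 0$) also yields $p = \theta\,\MPinverse{\Diag(\sqrt w)}\diag(X^*) \ge 0$ and the finer eigenvector relation~\eqref{eq:gijswijt-lambda-trick2}.

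It then remains to turn $X^*$ into a point witnessing $\LSthetaC{-\Yopt}{w} \le \theta$. Put $M := \DiagScaleMap{\sqrt w}(H_{-\Yopt} + I) \succeq 0$; the concave program defining $\LSthetaC{-\Yopt}{w}$ obeys the weak-duality bound $2\iprod{w}{x} - \qform{M}{x} \le \qform{M}{\bar x}$ for every $x \ge 0$ and every $\bar x$ with $M\bar x \ge w$ (expand $(x - \bar x)^{\transp} M (x - \bar x) \ge 0$ and use $x \ge 0$ together with $M\bar x - w \ge 0$). Taking $\bar x := \MPinverse{\Diag(\sqrt w)}\eta$, the requirements $M\bar x \ge w$ and $\qform{M}{\bar x} \le \theta$ become $(\mu I - \Yopt)\eta \ge \mu\sqrt w$ and $\eta^{\transp}(\mu I - \Yopt)\eta \le \mu\theta$. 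The candidate $\eta$ is built from $p = X^*\sqrt w$ — supplemented, when $X^*$ has higher rank, by a correction term taken from the kernel of $\mu I - \Yopt$ (equivalently, from the eigenvector supplied by~\eqref{eq:gijswijt-lambda-trick2}) — with $(\mu I - \Yopt)p = \theta\sqrt w - (\theta - \mu)p$ and $p \ge 0$ driving the first inequality and $\iprod{\sqrt w}{p} = \theta$, $\trace(X^*) = 1$ driving the second. Once such an $\eta$ is produced, $\LSthetaC{-\Yopt}{w} \le \qform{M}{\bar x} \le \theta$ and the proof is complete.

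I expect this last construction to be the main obstacle. When $\theta = \mu$ — which genuinely occurs for some $\Yopt \ne 0$ — the matrix $\mu I - \Yopt$ is singular and $\sqrt w$ can have a nonzero component in its kernel, so this subcase must be split off and handled with pseudoinverses and support bookkeeping. When $\theta > \mu$, the simplest choice $\eta = p$ reduces the first constraint to the componentwise inequality $p \le \sqrt w$ and the objective constraint to $\norm{p}^{2} \ge \theta$; neither follows from $\trace(X^*) = 1$ and $\qform{X^*}{\sqrt w} = \theta$ alone (indeed $(X^{*})^{2} \preceq X^*$ gives $\norm{p}^{2} = \qform{(X^{*})^{2}}{\sqrt w} \le \theta$, with equality when $X^*$ has rank one), so the extra structure of $X^*$ coming from the lambda trick is essential — which is exactly the point at which optimality of $\Yopt$ is used, since for a non-optimal $\Yopt$ one may have $\LSthetaC{-\Yopt}{w} = +\infty$ and no bound relying only on $\theta I \succeq \Yopt + \oprodsym{\sqrt w}$ can hold.
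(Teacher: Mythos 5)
Your reduction of the problem is sound up to the last step, and your diagnosis of where the difficulty lies is accurate --- but the proposal does not close that gap, and the fallback you sketch for closing it provably cannot work. The missing idea is a rank argument. In the case \(\theta > \mu = \lambdamax(\Yopt)\), the matrix \(\theta I - \Yopt\) is positive definite, so \(\theta I - \Yopt - \oprodsym{\sqrt{w}}\) has rank at least \(\card{V}-1\); complementary slackness \(\iprod{X^*}{\theta I - \Yopt - \oprodsym{\sqrt{w}}} = 0\) between two positive semidefinite matrices then forces \emph{every} dual optimal \(X^*\) to have rank one. This is exactly the step the paper takes: it writes \(X^* = \theta^{-1}\oprodsym{\bar{y}}\), and Lemma~\ref{lemma:gijswijt-lambda-trick} (via \eqref{eq:gijswijt-lambda-trick4}) shows \(\bar{y} = \Diag(\sqrt{w})\incidvector{S}\) for \(S = \supp(\bar{y}) \subseteq \supp(w)\), with \(\iprod{\sqrt{w}}{\bar{y}} = \norm{\bar{y}}^2 = \theta\). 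These are precisely the two facts you could not extract from \(\trace(X^*)=1\) and \(\qform{X^*}{\sqrt{w}}=\theta\) alone: your \(p = X^*\sqrt{w}\) equals \(\bar{y}\), so \(p \le \sqrt{w}\) componentwise and \(\norm{p}^2 = \theta\), and your choice \(\eta = p\) then goes through. (The paper phrases the conclusion as verifying the KKT conditions~\eqref{eq:LuzS-KKT} for the \(0\)/\(1\) vector \(\incidvector{S}\) rather than via your weak-duality bound, but that is cosmetic.)

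Your proposed repair for higher-rank \(X^*\) --- ``a correction term taken from the kernel of \(\mu I - \Yopt\)'' --- is a dead end: adding \(k \in \Null(\mu I - \Yopt)\) to \(\eta\) changes neither \((\mu I - \Yopt)\eta\) nor \(\qform{(\mu I - \Yopt)}{\eta}\), so it cannot restore either of the two inequalities that fail when \(\eta = p\) and \(X^*\) has rank exceeding one. Two smaller points: the boundary case \(\theta = \lambdamax(\Yopt)\), which you flag but leave open, has a short direct argument in the paper (for any \(x \ge 0\) one writes the objective as \(2\iprod{w}{x} - \theta^{-1}\qform{\DiagScaleMap{\sqrt{w}}(\theta I - \Yopt - \oprodsym{\sqrt{w}})}{x} - \theta^{-1}\iprod{w}{x}^2 \le 2\iprod{w}{x} - \theta^{-1}\iprod{w}{x}^2 \le \theta\), using feasibility of \(\Yopt\)); and in your \(\Yopt = 0\) case the maximizer is \(x = \ones\), not \(x = \sqrt{w}\) (the optimal value \(\sum_i w_i\) you state is nonetheless correct).
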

\begin{proof}
  \newcommand*{\Copt}{\bar{C}}
  First note that the dual of~\eqref{eq:LuzS-strong-optproblem} has
  \(\frac{1}{\card{V}} I\) as a restricted Slater point, so an optimal
  solution for~\eqref{eq:LuzS-strong-optproblem} exists and \(\theta
  \geq 0\), with equality only if \(w = 0\).  We may thus assume that
  \(\theta > 0\).  Set \(\Copt \coloneqq - \Yopt\).  Clearly, \(\theta
  \geq \lambda_{\max}(\Yopt)\).  If \(\theta =
  \lambda_{\max}(\Yopt)\), then the objective value of an arbitrary
  \(x \in \Reals_+^V\) in the formulation for \(\LSthetaC{C}{w}\) is
  \begin{equation*}
    \begin{split}
      2\iprod{w}{x}
      -
      \qform{\DiagScaleMap{\sqrt{w}}\paren{H_{\Copt}+I}}{x}
      & =
      2\iprod{w}{x}
      -
      \theta^{-1}
      \qform{
        \DiagScaleMap{\sqrt{w}}
        \paren*{
          \theta I - \Yopt - \oprodsym{\sqrt{w}}
        }
      }{
        x
      }
      -
      \theta^{-1}\iprod{w}{x}^2
      \\
      & \leq
      2\iprod{w}{x}
      -
      \theta^{-1}\iprod{w}{x}^2
      \leq
      \theta
    \end{split}
  \end{equation*}
  since \(\paren{\theta^{1/2} - \theta^{-1/2}\iprod{w}{x}}^2 \geq 0\),
  and hence \(\LSthetaC{-\Yopt}{w} \leq \theta\).  Thus, we may
  assume that
  \begin{equation}
    \label{eq:LuzS-strong-aux1}
    \theta > \lambda_{\max}(\Yopt).
  \end{equation}

  \newcommand*{\Bopt}{\bar{B}}
  \newcommand*{\yopt}{\bar{y}}
  \newcommand*{\xopt}{\bar{x}}
  Let \(\Bopt\) be an optimal solution for the dual
  of~\eqref{eq:LuzS-strong-optproblem}, given by
  \(\sup\setst{\qform{B}{\sqrt{w}}}{\trace(B) = 1,\, B \in
    \AdjDual{\AdjCone} \cap \Psd{V}}\).  Note that \(\theta I -
  \Yopt\) is nonsingular by~\eqref{eq:LuzS-strong-aux1}, so the rank
  of \(\theta I - \Yopt - \oprodsym{\sqrt{w}}\) is \(\geq
  \card{V}-1\).  Since \(\iprod{\Bopt}{\theta I - \Yopt -
    \oprodsym{\sqrt{w}}} = 0\) by complementarity, it follows that
  \(\Bopt\) has rank one.  Write \(\Bopt = \theta^{-1}
  \oprodsym{\yopt}\) with \(\yopt\) having at least one positive
  component, set \(S \coloneqq \supp(\yopt)\) and \(\xopt \coloneqq
  \incidvector{S}\).  Let us show that
  \begin{subequations}
    \label{eq:LuzS-strong-auxs}
    \begin{gather}
      \label{eq:LuzS-strong-aux2}
      \yopt = \Diag(\sqrt{w}) \xopt,
      \\
      \label{eq:LuzS-strong-aux3}
      S \subseteq \supp(w),
      \\
      \label{eq:LuzS-strong-aux4}
      \iprod{\sqrt{w}}{\yopt} = \iprod{w}{\xopt} = \theta,
      \\
      \label{eq:LuzS-strong-aux4b}
      \norm{y}^2 = \theta.
    \end{gather}
  \end{subequations}
  We shall apply Lemma~\ref{lemma:gijswijt-lambda-trick}.  The
  inclusion~\eqref{eq:LuzS-strong-aux3} follows
  from~\eqref{eq:gijswijt-lambda-trick1}.
  Equation~\eqref{eq:gijswijt-lambda-trick4} yields
  \(\iprod{\sqrt{w}}{\yopt}\yopt = \theta \MPinverse{\Diag(\sqrt{w})}
  \Diag(\yopt) \yopt\).  If we multiply this equation on the left by
  \(\Diag(\sqrt{w}) \MPinverse{\Diag(\yopt)}\) and
  use~\eqref{eq:LuzS-strong-aux3} we get
  \begin{equation}
    \label{eq:2}
    \iprod{\sqrt{w}}{\yopt} \Diag(\sqrt{w}) \xopt = \theta \yopt.
  \end{equation}
  Thus, \(\yopt \geq 0\) or
  \(\yopt \leq 0\), so that \(\yopt \geq 0\) since \(\yopt\) has at
  least one positive component.  Then \(\theta =
  \qform{\Bopt}{\sqrt{w}}\) implies that \(\iprod{\sqrt{w}}{\yopt} =
  \theta\), which establishes~\eqref{eq:LuzS-strong-aux2} via~\eqref{eq:2} and half
  of~\eqref{eq:LuzS-strong-aux4}; the other half follows from the half
  already established and~\eqref{eq:LuzS-strong-aux2}.  Finally,
  \eqref{eq:LuzS-strong-aux4b} follows from \(\trace(\Bb) = 1\).

  We claim that
  \begin{equation}
    \label{eq:LuzS-strong-aux5}
    \xopt
    \text{
      satisfies the optimality conditions~\eqref{eq:LuzS-KKT} for
    }
    \theta = \LSthetaC{\Copt}{w}.
  \end{equation}
  Clearly \eqref{eq:LuzS-KKT1} holds.  Since complementarity yields
  \(\paren{\theta I - \Yopt - \oprodsym{\sqrt{w}}} \yopt = 0\), we
  have \(\theta \yopt = \paren{\oprodsym{\sqrt{w}} + \Yopt} \yopt\)
  and thus \(-\Yopt \yopt = \theta\paren*{\sqrt{w} - \yopt}\)
  using~\eqref{eq:LuzS-strong-aux4}.  Thus,
  \begin{equation}
    \label{eq:LuzS-strong-aux6}
    \DiagScaleMap{\sqrt{w}}
    \paren{
      H_{\Copt} + I
    }
    \xopt
    =
    \Diag(\sqrt{w})
    \sqbrac*{
      \dfrac{
        -\Yopt
      }{
        \lambda_{\max}(\Yopt)
      }
      + I
    }
    \yopt
    =
    \Diag(\sqrt{w})
    \sqbrac*{
      \dfrac{\theta}{\lambda_{\max}(\Yopt)}
      \paren{\sqrt{w} - \yopt}
      + \yopt
    }.
  \end{equation}
  Since \(\theta \geq \lambda_{\max}(\Yopt)\) and \(\sqrt{w} \geq \yb\), we get
  from~\eqref{eq:LuzS-strong-aux6} that
  \(\DiagScaleMap{\sqrt{w}} \paren{H_{\Copt} + I} \xopt \geq
  \Diag(\sqrt{w}) \sqbrac{\paren{\sqrt{w}-\yopt}+\yopt}) = w\),
  so~\eqref{eq:LuzS-KKT2} holds.  By hitting the LHS
  of~\eqref{eq:LuzS-strong-aux6} on the left with~\(\xb^{\transp}\),
  we get from the RHS using~\eqref{eq:LuzS-strong-auxs} that
  \(\qform{\DiagScaleMap{\sqrt{w}}(H_{\Copt}+I)}{\xb} = \theta\).
  Thus, \eqref{eq:LuzS-KKT3} follows from~\eqref{eq:LuzS-strong-aux4},
  and the proof of~\eqref{eq:LuzS-strong-aux5} is complete.
\end{proof}
\egroup

\begin{corollary}
  Let \(\AdjCone \subseteq \Sym{V}\) be a polyhedral diagonally
  scaling-invariant cone such that \(\Image(\Diag) \subseteq
  \AdjCone\).  Let \(w \in \Reals_+^V\).  Then
  \begin{equation}
    \label{eq:LuzS}
    \LStheta{\AdjCone}{w}
    =
    \theta\paren[\big]{
      \thinspace\AdjDual{\AdjCone}, \Psd{V}; w
    }.
  \end{equation}
  Moreover, the optimization problem for \(\LStheta{\AdjCone}{w}\)
  has an optimal solution.
\end{corollary}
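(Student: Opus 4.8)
The plan is to prove both inequalities in~\eqref{eq:LuzS} by combining the two analytic estimates already at hand --- Proposition~\ref{prop:LuzS-weak} for ``\(\geq\)'' and Theorem~\ref{thm:LuzS-strong} for ``\(\leq\)'' --- after using Theorem~\ref{thm:all-thetas-equal} to rewrite \(\theta\paren[\big]{\thinspace\AdjDual{\AdjCone},\Psd{V};w}\) in the two guises in which those estimates are phrased, namely as \(\theta_1\) and as \(\theta_2\) evaluated at the pair \(\paren[\big]{\AdjDual{\AdjCone},\Psd{V}}\).

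First I would dispatch the bookkeeping that lines up all the hypotheses. Since \(\AdjCone\) is polyhedral with \(\Image(\Diag)\subseteq\AdjCone\), the cone \(\AdjDual{\AdjCone} = \Image(\Diag)-\dual{\AdjCone}\) is again polyhedral, contains \(\Image(\Diag)\) (because \(0\in\dual{\AdjCone}\)), and satisfies the involution \(\AdjDual{\AdjDual{\AdjCone}}=\AdjCone\); the cone \(\Psd{V}\) is diagonally scaling-invariant, closed, convex, self-dual, has nonempty interior, contains \(\Diag(\Reals_+^V)\), and \(\Schurlift{\Psd{V}}=\Psdlift{\Psd{V}}=\Psd{\setlift{V}}\). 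Hence Theorem~\ref{thm:all-thetas-equal} applies to \(\paren[\big]{\AdjDual{\AdjCone},\Psd{V}}\), and Corollary~\ref{cor:TH-AK-convex-corner} shows that \(\genTH{\AdjCone}{\Psd{\setlift{V}}}\) and \(\genTH[\big]{\thinspace\AdjDual{\AdjCone}}{\Psd{\setlift{V}}}\) are convex corners; in particular the latter is closed, so by the attainment clause of Theorem~\ref{thm:all-thetas-equal} the parameter \(\theta_2\paren[\big]{\AdjDual{\AdjCone},\Psd{V};w}\) is attained. Unwinding the definitions with these identities (and \(\dual{\Psd{V}}=\Psd{V}\)) gives \(\theta_1\paren[\big]{\AdjDual{\AdjCone},\Psd{V};w}=\min\setst[\big]{\max_{i\in V}w_i/y_i}{y\in\genTH{\AdjCone}{\Psd{\setlift{V}}}}\) (no closure operator is needed, by closedness) and \(\theta_2\paren[\big]{\AdjDual{\AdjCone},\Psd{V};w}=\min\setst[\big]{\lambdamax\paren[\big]{Y+\oprodsym{\sqrt{w}}}}{Y\in-\AdjCone\cap\Null(\diag)}\), the latter being precisely the quantity \(\theta\) of Theorem~\ref{thm:LuzS-strong}.

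For ``\(\geq\)'' in~\eqref{eq:LuzS}, Proposition~\ref{prop:LuzS-weak} gives \(\theta_1\paren[\big]{\AdjDual{\AdjCone},\Psd{V};w}\leq\LStheta{\AdjCone}{w}\), and the left side equals \(\theta\paren[\big]{\thinspace\AdjDual{\AdjCone},\Psd{V};w}\) by Theorem~\ref{thm:all-thetas-equal}. For ``\(\leq\)'', I would take an optimal \(\bar{Y}\) for \(\theta_2\paren[\big]{\AdjDual{\AdjCone},\Psd{V};w}\); since \(-\bar{Y}\in\AdjCone\cap\Null(\diag)\), Theorem~\ref{thm:LuzS-strong} yields \(\theta\paren[\big]{\thinspace\AdjDual{\AdjCone},\Psd{V};w}=\theta_2\paren[\big]{\AdjDual{\AdjCone},\Psd{V};w}\geq\LSthetaC{-\bar{Y}}{w}\geq\LStheta{\AdjCone}{w}\). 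This proves~\eqref{eq:LuzS}, and the resulting chain \(\LStheta{\AdjCone}{w}\leq\LSthetaC{-\bar{Y}}{w}\leq\theta\paren[\big]{\thinspace\AdjDual{\AdjCone},\Psd{V};w}=\LStheta{\AdjCone}{w}\) exhibits \(C=-\bar{Y}\) as an optimal solution for the infimum defining \(\LStheta{\AdjCone}{w}\). The one place that wants care is the ``unwinding'' step of the second paragraph --- tracking the \(\AdjDual{(\cdot)}\) involution and the identity \(\Schurlift{\Psd{V}}=\Psd{\setlift{V}}\) through the definitions of \(\theta_1\) and \(\theta_2\), and invoking closedness of \(\genTH{\AdjCone}{\Psd{\setlift{V}}}\) so that no closure operator survives and the relevant infima are minima --- but there is no genuine obstacle beyond this bookkeeping, since the two substantive inequalities are exactly Proposition~\ref{prop:LuzS-weak} and Theorem~\ref{thm:LuzS-strong}.
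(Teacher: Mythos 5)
Your proposal is correct and follows exactly the route the paper intends: its proof is the one-line ``Immediate from Proposition~\ref{prop:LuzS-weak} and Theorem~\ref{thm:LuzS-strong},'' and you have simply filled in the bookkeeping (the involution \(\AdjDual{\AdjDual{\AdjCone}}=\AdjCone\), the identities \(\Schurlift{\Psd{V}}=\Psdlift{\Psd{V}}=\Psd{\setlift{V}}\) and \(\dual{\Psd{V}}=\Psd{V}\), closedness of the theta body, and the identification of the two formulations via Theorem~\ref{thm:all-thetas-equal}) together with the observation that the chain of inequalities exhibits \(C=-\bar{Y}\) as an optimal solution.
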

\begin{proof}
  Immediate from Proposition~\ref{prop:LuzS-weak} and
  Theorem~\ref{thm:LuzS-strong}.
\end{proof}

\section{Conclusion}

We took an axiomatic viewpoint in our study of the Lovász theta
function and the related theta body of graphs.  We generalized the
binary encoding of graphs by the notion of diagonally
scaling-invariant polyhedral cones and the semidefinite cone with a
more general set of convex cones.  These generalization and viewpoint
led to graph complementation being replaced by convex polarity and to
the new notion of Schur-Lifting of cones as the dual of commonly used
PSD-Lifting of cones.  Our new general theory has many advantages: we
are able to treat the stable set polytope and many of its convex
relaxations uniformly and extend the most commonly used equivalent
characterizations of Lovász theta function and the most powerful
properties to these generalized theta bodies.

\appendix

\section{Antiblocking Duality}

Antiblocking duality is the form of duality most suitable for convex
corners, and we used extensively the basic facts of this duality
theory in this paper.  Even though such facts are well known, we are
not aware of any previous treatment in the literature of antiblocking
duality for non-polyhedral convex corners which meets the needs of this paper.  Thus, for the sake of
completeness, we include a brief, self-contained description of this
theory.

\bgroup
\newcommand*{\corner}{\mathscr{C}}
Recall from Section~\ref{sec:theta-convex-corners} that a convex
corner is a compact, lower-comprehensive convex subset of the
nonnegative orthant with nonempty interior, and that the antiblocker
of a convex corner~\(\corner \subseteq \Reals_+^n\) is \(\abl{\corner}
\coloneqq \setst{y \in \Reals_+^n}{\iprod{y}{x} \leq 1\,\forall x \in
  \corner}\).  Note that \(\abl{\corner}\) is also a convex corner.
Closedness, convexity and inclusion in~\(\Reals_+^n\) are clear,
whereas lower-comprehensiveness follows from the inclusion~\(\corner
\subseteq \Reals_+^n\).  Boundedness of~\(\abl{\corner}\) follows from
\(\corner \cap \Reals_{++}^n \neq \emptyset\), since
\(\interior\paren{\corner} \neq \emptyset\).  Finally, since
\(\corner\) is bounded, \(\abl{\corner}\) has nonempty interior: if
\(M \in \Reals\) is such that \(x \leq n^{-1} M \ones\) for all \(x
\in \corner\), then \(M^{-1}\ones \in \abl{\corner}\), so that
\(\tfrac{1}{2}M^{-1}\ones \in \interior(\abl{\corner})\) by
lower-comprehensiveness of~\(\abl{\corner}\).

The key fact of antiblocking duality is that \(\abl{\cdot}\) defines
an involution on the class of convex corners, which we prove next.
\begin{theorem}
  \label{thm:abl-duality}
  If \(\corner \subseteq \Reals_+^n\) is a convex corner, then
  \(\abl{\abl{\corner}} = \corner\).
\end{theorem}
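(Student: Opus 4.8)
The plan is to establish the two inclusions $\corner \subseteq \abl{\abl{\corner}}$ and $\abl{\abl{\corner}} \subseteq \corner$ separately. The first is immediate: if $x \in \corner$ then $x \in \Reals_+^n$, and $\iprod{y}{x} \le 1$ for every $y \in \abl{\corner}$ by the very definition of $\abl{\corner}$, so $x \in \abl{\abl{\corner}}$. For the second inclusion I would argue by contraposition, showing that if $z \notin \corner$ then $z \notin \abl{\abl{\corner}}$. We may assume $z \in \Reals_+^n$, since $\abl{\abl{\corner}} \subseteq \Reals_+^n$ anyway. As $\corner$ is closed and convex, strong separation of the point $z$ from $\corner$ produces $c \in \Reals^n$ and $\beta \in \Reals$ with $\iprod{c}{x} \le \beta < \iprod{c}{z}$ for all $x \in \corner$.

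The key step — and the main obstacle — is to upgrade $c$ to a \emph{nonnegative} separating functional, which is exactly where lower-comprehensiveness is used. Replace $c$ by its positive part $c^+$ given by $c_i^+ \coloneqq \max(c_i, 0)$. For any $x \in \corner$, the vector $x'$ obtained from $x$ by setting to zero every coordinate $i$ with $c_i \le 0$ satisfies $0 \le x' \le x$, hence lies in $\corner$ by lower-comprehensiveness, and one computes $\iprod{c^+}{x} = \iprod{c}{x'} \le \beta$; on the other hand $\iprod{c^+}{z} \ge \iprod{c}{z}$ since $z \ge 0$. So, renaming $c \coloneqq c^+$, we obtain $c \in \Reals_+^n$ with $\suppf{\corner}{c} \le \beta < \iprod{c}{z}$. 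Taking $x = 0 \in \corner$ — the origin lies in $\corner$ because $\corner$ is nonempty, contained in $\Reals_+^n$, and lower-comprehensive — shows $\alpha \coloneqq \suppf{\corner}{c} \ge 0$, hence $\iprod{c}{z} > 0$ and in particular $c \ne 0$; and because $\corner$ has nonempty interior it contains a strictly positive point $p$, whence $\alpha \ge \iprod{c}{p} > 0$. I would then finish by setting $y \coloneqq \alpha^{-1} c \in \Reals_+^n$: it satisfies $\iprod{y}{x} \le 1$ for all $x \in \corner$, so $y \in \abl{\corner}$, while $\iprod{y}{z} = \alpha^{-1}\iprod{c}{z} > 1$, witnessing $z \notin \abl{\abl{\corner}}$.

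I expect the truncation step to be the only one requiring care, since it is the place where the combinatorial hypothesis (lower-comprehensiveness of $\corner$, equivalently monotonicity of its support function) is converted into the analytic statement that $z$ can be separated from $\corner$ by a functional in $\Reals_+^n$. The remaining ingredients are routine: compactness of $\corner$ (or, alternatively, strong separation itself) guarantees $\alpha < \infty$ so that the rescaling is legitimate, and the nonempty-interior hypothesis guarantees $\alpha > 0$ so that $y = \alpha^{-1} c$ is well defined.
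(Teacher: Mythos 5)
Your proposal is correct, and its architecture matches the paper's: the inclusion \(\corner \subseteq \abl{\abl{\corner}}\) is immediate, and the reverse inclusion is proved by separating a point \(z \notin \corner\) from \(\corner\), using lower\nobreakdash-comprehensiveness to obtain a \emph{nonnegative} separating functional, and using the nonempty interior (and the origin's membership in \(\corner\)) to normalize it into an element of \(\abl{\corner}\) that certifies \(z \notin \abl{\abl{\corner}}\). The one genuine difference is in how the nonnegative separator is produced. The paper does not invoke an abstract separation theorem: it takes \(x^*\) to be the nearest point of \(\corner\) to \(u\), sets \(y = u - x^*\), derives the separation inequality by the standard variational argument, and then proves \(y \geq 0\) by contradiction --- if \(y_i < 0\) then \(x^*_i > 0\), and perturbing \(x^*\) to \(x^* - \eps e_i\) (which stays in \(\corner\) by lower-comprehensiveness) violates the separation bound. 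You instead start from an arbitrary strong separator \(c\) and truncate it to its positive part \(c^+\), checking that the separation survives because zeroing out the coordinates of \(x \in \corner\) where \(c_i \leq 0\) keeps the point in \(\corner\). Your truncation step is correct (\(\iprod{c^+}{x} = \iprod{c}{x'} \leq \beta\) and \(\iprod{c^+}{z} \geq \iprod{c}{z}\) since \(z \geq 0\)), and it is slightly more modular, at the cost of importing the separation theorem as a black box; the paper's projection argument is fully self-contained, which fits its stated goal of giving a self-contained appendix. Your handling of the normalization (\(\alpha \geq 0\) via \(0 \in \corner\), \(\alpha > 0\) via a strictly positive interior point, \(c \neq 0\) via \(\iprod{c}{z} > \beta \geq \alpha \geq 0\)) is also sound and parallels the paper's use of \(\interior(\corner) \neq \emptyset\) to get \(\mu > 0\).
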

\begin{proof}
  \newcommand*{\xopt}[1][]{x_{#1}^*}
  The inclusion `\(\supseteq\)' is trivial.  For the other inclusion,
  let \(u \in \Reals^n \drop \corner\).  If \(u \not\in \Reals_+^n\),
  then \(u \in \Reals^n \drop \abl{\abl{\corner}}\), so assume that
  \(u \geq 0\).  Let \(\xopt \in \corner\) minimize \(\norm{u-x}^2\)
  over \(x \in \corner\).  Set
  \begin{equation*}
    y \coloneqq u - \xopt
    \qquad
    \text{and}
    \qquad
    \mu \coloneqq \iprod{y}{\xopt}.
  \end{equation*}
  We claim that
  \begin{equation}
    \label{eq:abl-sep}
    \iprod{y}{x} \leq \mu
    \quad
    \forall x \in \corner,
    \qquad
    \text{and}
    \qquad
    \iprod{y}{u} > \mu.
  \end{equation}

  Let \(x \in \corner\).  If \(t \in \char40{}0,1\char93\), then \(\xopt
  + t(x - \xopt) \in \corner\), so that \(\norm{u-\xopt}^2 \leq
  \norm{u-\xopt-t(x-\xopt)}^2\), whence \(\iprod{u-\xopt}{x-\xopt} \leq
  t\norm{x-\xopt}^2/2\).  By sending \(t \downarrow 0\), we get
  \(\iprod{u-\xopt}{x-\xopt} \leq 0\), which implies the first half
  of~\eqref{eq:abl-sep}.  The second half is easily seen to be
  equivalent to \(\norm{y}^2 > 0\), which follows from the fact that
  \(\xopt \in \corner\) whereas \(u \not\in \corner\).  This concludes
  the proof of~\eqref{eq:abl-sep}.

  Next we show that
  \begin{equation}
    \label{eq:abl-good-sep}
    y \geq 0.
  \end{equation}
  Suppose that \(y_i < 0\) for some \(i \in \set{1,\dotsc,n}\).  Then
  \(\xopt[i] > u_i \geq 0\).  Let \(\eps > 0\) such that \(\xopt[i]
  \geq \eps\) and set \(\xb \coloneqq \xopt - \eps e_i \in \corner\).
  By~\eqref{eq:abl-sep}, we get \(\mu \geq \iprod{y}{\xb} =
  \iprod{y}{\xopt} - \eps\iprod{y}{e_i} = \mu - \eps y_i > \mu\), a
  contradiction.  This proves~\eqref{eq:abl-good-sep}, which yields
  \(\mu > 0\) when combined with~\eqref{eq:abl-sep} and
  \(\interior(\corner) \neq \emptyset\).  Now~\eqref{eq:abl-sep}
  and~\eqref{eq:abl-good-sep} show that \(\tfrac{1}{\mu} y \in
  \abl{\corner}\), so \(u \in \Reals^n \drop \abl{\abl{\corner}}\)
  by~\eqref{eq:abl-sep}.  This concludes the proof of~`\(\subseteq\)'.
\end{proof}

\bgroup
\newcommand*{\xopt}[1][]{x_{#1}^*}
\newcommand*{\yopt}[1][]{y_{#1}^*}
We thus obtain the following optimality conditions:
\begin{corollary}
  Let \(\corner \subseteq \Reals_+^n\) be a convex corner.  Let \(w
  \in \Reals_+^n\), and let \(\theta \in \Reals_+\).  Then
  \(\suppf{\abl{\corner}}{w} = \theta\) if and only if there exists
  \(\xopt \in \abl{\corner}\) and \(\yopt \in \corner\) such that
  \(\iprod{\yopt}{\xopt} = 1\) and \(\theta \yopt = w\).
\end{corollary}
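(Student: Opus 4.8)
The plan is to obtain both implications directly from the definition of the antiblocker, the involution $\abl{\abl{\corner}} = \corner$ of Theorem~\ref{thm:abl-duality}, and the compactness of convex corners; no additional tools are required.

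I would first dispose of the implication ``$\Leftarrow$''. Suppose $\xopt \in \abl{\corner}$ and $\yopt \in \corner$ satisfy $\iprod{\yopt}{\xopt} = 1$ and $\theta\yopt = w$. For every $x \in \abl{\corner}$ the definition of $\abl{\corner}$ gives $\iprod{\yopt}{x} \le 1$ (as $\yopt \in \corner$), so $\iprod{w}{x} = \theta\,\iprod{\yopt}{x} \le \theta$ using $\theta \ge 0$; and $\xopt$ attains $\iprod{w}{\xopt} = \theta\,\iprod{\yopt}{\xopt} = \theta$. Hence $\suppf{\abl{\corner}}{w} = \theta$. This is a short computation with no obstacle.

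For ``$\Rightarrow$'', set $\theta \coloneqq \suppf{\abl{\corner}}{w}$ and first treat the main case $w \ne 0$. Since $\abl{\corner}$ is a convex corner it has nonempty interior, hence contains a strictly positive vector $v$, and then $\theta \ge \iprod{w}{v} > 0$ because $w \ge 0$ and $w \ne 0$. Put $\yopt \coloneqq \tfrac{1}{\theta}w \in \Reals_+^n$; for every $s \in \abl{\corner}$ we have $\iprod{\yopt}{s} = \tfrac{1}{\theta}\iprod{w}{s} \le \tfrac{1}{\theta}\cdot\theta = 1$, so $\yopt \in \abl{\abl{\corner}} = \corner$ by Theorem~\ref{thm:abl-duality}. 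Compactness of $\abl{\corner}$ makes the supremum defining $\suppf{\abl{\corner}}{w}$ attained at some $\xopt \in \abl{\corner}$, and then $\iprod{\yopt}{\xopt} = \tfrac{1}{\theta}\iprod{w}{\xopt} = 1$ while $\theta\yopt = w$ by construction. In the remaining case $w = 0$ one has $\theta = 0$, the constraint $\theta\yopt = w$ is automatically satisfied, and it suffices to exhibit some $\xopt \in \abl{\corner}$ and $\yopt \in \corner$ with $\iprod{\yopt}{\xopt} = 1$; such a pair is delivered by the argument of the previous case applied with any nonzero vector of $\Reals_+^n$ in place of $w$.

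I do not anticipate a genuine obstacle: the statement is essentially a repackaging of the polarity $\abl{\abl{\corner}} = \corner$ together with attainment by compactness. The one point that needs attention is that the normalization $\yopt = \tfrac{1}{\theta}w$ is legitimate only when $\theta > 0$, which forces the trivial case $w = 0$ to be separated out and handled on its own.
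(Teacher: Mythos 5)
Your proof is correct and follows essentially the same route as the paper's: sufficiency by the direct computation $\iprod{w}{x} = \theta\iprod{\yopt}{x} \le \theta$ with equality at $\xopt$, and necessity by normalizing $\yopt \coloneqq w/\theta$ and invoking $\abl{\abl{\corner}} = \corner$ together with attainment by compactness. The only (immaterial) difference is in the degenerate case $w = 0$: the paper produces the pair by maximizing $\norm{x}^2$ over $\abl{\corner}$ and applying Cauchy--Schwarz, whereas you recycle the $w \ne 0$ argument with an arbitrary nonzero weight, which works equally well.
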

\begin{proof}
  First we prove necessity.  If \(w = 0\), let \(\xopt \in
  \abl{\corner}\) maximize \(\norm{x}^2\) over \(x \in
  \abl{\corner}\), and set \(\yopt \coloneqq \xopt/\norm{\xopt}^2\);
  if \(x \in \abl{\corner}\), then \(\iprod{\yopt}{x} \leq \norm{x}
  \norm{\xopt}/\norm{\xopt}^2 \leq 1\) by Cauchy-Schwarz, so \(\yopt
  \in \corner\) by Theorem~\ref{thm:abl-duality}.  Assume that \(w
  \neq 0\), so \(\theta > 0\).  Let \(\xopt\) be
  maximize~\(\iprod{w}{x}\) over \(x \in \abl{\corner}\), and set
  \(\yopt \coloneqq w/\theta\).  If \(x \in \abl{\corner}\), then
  \(\iprod{\yopt}{x} \leq 1\) since \(\theta =
  \suppf{\abl{\corner}}{w}\), so \(\yopt \in \corner\) by
  Theorem~\ref{thm:abl-duality}.

  Now we prove sufficiency.  If \(x \in \abl{\corner}\), then
  \(\iprod{w}{x} = \theta \iprod{\yopt}{x} \leq \theta\) shows that
  \(\suppf{\abl{\corner}}{w} \leq \theta\), whereas \(\iprod{w}{\xopt}
  = \theta\iprod{\yopt}{\xopt} = \theta\) shows that equality holds.
\end{proof}
\egroup                         
\egroup                         

\end{document}